\newtheorem{thm}{Theorem}[section]
\newtheorem{cor}[thm]{Corollary}
\newtheorem{lem}[thm]{Lemma}
\theoremstyle{definition}
\newtheorem{defin}[thm]{Definition}
\newtheorem{rem}[thm]{Remark}
\numberwithin{equation}{section}
\def\raw{\rightarrow}
\def\R{{\mathbb R}}
\def\Z{{\mathbb Z}}
\def\N{{\mathbb N}}
\def\C{{\mathbb C}}
\def\vv{{\bf v}}
\def\raw{\rightarrow}
\def\Inv{^{-1}}
\def\ty{{\tilde y}}
\def\tI{{\tilde I}}
\def\tUpsilon{\tilde{\Upsilon}}
\def\baru{{\overline u}}
\def\barv{{\overline v}}
\def\bara{{\overline a}}
\def\bargamma{{\overline \gamma}}
\def\barGamma{{\overline \Gamma}}
\def\bell{{\overline \ell}}
\def\barm{{\overline m}}
\def\bark{{\overline k}}
\def\cN{\mathcal{N}}
\def\cA{{\cal A}}
\DeclareMathOperator{\LP}{LimPts}
\DeclareMathOperator{\sppan}{span}
\DeclareMathOperator{\image}{image}
\def\vv{\vec{v}}
\def\vu{\vec{u}}
\def\vw{\vec{w}}
\def\vw{\vec{w}}
\def\ve{\vec{e}}
\def\vell{\vec{\ell}}
\def\homeo{homeomorphism}
\def\PF{Perron–-Frobenius}
\def\be{\mathbf{e}}
\def\cA{\mathcal{A}}
\def\cC{\mathcal{C}}
\def\cS{\mathcal{S}}
\def\cW{\mathcal{W}}
\def\cP{\mathcal{P}}
\def\cR{\mathcal{R}}
\def\hy{\hat{y}}
\def\hone{{\hat{1}}}
\def\hbeta{{\hat{\beta}}}
\def\us{{\underline s}} 
\def\ut{{\underline t}} 
\def\uv{{\underline v}} 
\def\un{{\underline n}} 
\def\uw{{\underline w}} 
\def\un{{\underline n}}
\def\Cl{\operatorname{Cl}}
\def\spec{\cS}
\def\sspec{\cS\cS}
\def\Gammam{\Gamma_{(m)}}
\def\uht{\underline{\hat{t}}}
\def\uhs{\underline{\hat{s}}}
\def\uhw{\underline{\hat{w}}}
\def\Dun{{\Delta_{\un}}}
\def\Dunp{{\Delta_{\un}^+}}
\def\tOmega{\tilde{\Omega}}
\def\sne{\not=\not=}
\def\barF{\overline{F}}
\def\barf{\overline{f}}
\def\mot{{\mu_1, \mu_2}}
\def\Tmot{T_\mot}
\def\tTmot{\tilde{T}_\mot}
\def\tT{\tilde{T}}
\newcommand{\lseq}[1]{\underleftarrow{#1}}
\newcommand{\rseq}[1]{\underrightarrow{#1}}
\def\Np{{\N^+}}
\def\what{{\Sigma^+_\infty}}
   \def\MR#1{}
\begin{document}


\baselineskip=17pt



\title[Infimax family geometric representation]{Geometric representation of the infimax S-adic family}

\author[P. Boyland]{Philip Boyland}
\address{Department of Mathematics\\
University of Florida\\
Gainesville, FL 32611}
\email{boyland@ufl.edu}

\author[W. Severa]{William Severa}
\address{Department of Mathematics\\
University of Florida\\
Gainesville, FL 32611}
\email{wmsever@sandia.gov}

\date{}

\begin{abstract}
We construct geometric realizations
for the infimax family of substitutions by
generalizing the
Rauzy-Canterini-Siegel method for a single
substitution to the S-adic case.
The composition of each countably infinite subcollection
of substitutions from the family has an asymptotic fixed sequence
whose shift orbit closure is an infimax minimal
set $\Delta^+$.
The subcollection of substitutions also generates
an infinite Bratteli-Vershik diagram with prefix-suffix labeled 
edges. Paths in the diagram give the Dumont-Thomas expansion
of  sequences in $\Delta^+$ which in turn gives
a projection onto the asymptotic stable direction
of the infinite product of the Abelianization matrices.
The projections of all sequences from $\Delta^+$ is
the generalized Rauzy fractal which
has subpieces corresponding to the images of
symbolic cylinder sets.
The intervals containing these subpieces are shown
to be disjoint except at endpoints, and thus the
induced map derived from the symbolic shift translates them.
Therefore the process yields an Interval Translation Map (ITM), and
the Rauzy fractal is proved to be its attractor.
\end{abstract}

\subjclass[2010]{Primary 37B10; Secondary 37E05}

\keywords{S-adic, infimax, geometric representation}

\maketitle

\section{Introduction}
Substitution morphisms are an integral part of many areas of
mathematics including 
dynamical systems, combinatorics, number theory and
formal language theory. The books  \cite{Q}, \cite{Fogg}, and \cite{ANT} 
give a good sense of the diversity and depth of the field.
The natural
generalization of a single substitution 
to the composition of an infinite sequence of substitutions, 
termed S-adic systems by Ferenczi in \cite{F1},
allows the modeling and analysis of a wider range of fundamental
structures (see, for example, \cite{Sadic}, \cite{Sadicthesis},
\cite{Sadic2}, and \cite{BST}).

In \cite{lexi} an S-adic family was found to generate the solutions to 
the following problem. A one-sided sequence is called
\textit{maximal} if it is larger in the lexicographic order
 than all its shifts.
Let ${\mathcal M}(\rho)$ denote all the maximal sequences
with asymptotic digit frequency vector $\rho$. The infimum 
 in the lexicographic order
of ${\mathcal M}(\rho)$ is called the \textit{infimax sequence for}
$\rho$ and in \cite{lexi} it was shown that it
can be constructed using a specific S-adic family. In this paper
we further study the properties of this 
\textit{infimax S-adic family}.
The family studied here is indexed by 
the positive integers $\Np = {1, 2, 3, \dots}$ with
the substitution $\Lambda_n$ given by
\begin{equation}\label{subdef}
\Lambda_n: 
\begin{array}{ccl} 1 & \rightarrow & 2 \\  
2 & \rightarrow & 3 1^{n +1}\\ 
3 & \rightarrow & 31^n.
\end{array} 
\end{equation}
Note that the substitution $\Lambda_0$ is required
for the full infimax problem: see Remark~\ref{missing}(b) below.

The collection of allowable lists of indices is
$\what := (\Np)^{\Np}$. 
Given a list of indices $\un \in \what$, it is easy to
check that the right one-sided sequence 
\begin{equation}\label{alphaone}
\rseq{\alpha} := \lim_{k\raw\infty}
 \Lambda_{n_1}\circ\Lambda_{n_2}\dots\Lambda_{n_k}(3)
\end{equation}
exists. 
For a constant list of indices $\un = n n n \dots$,
the sequence
$\rseq{\alpha}$ is a fixed point of $\Lambda_n$. For a general $\un\in\what$,
the sequence
$\rseq{\alpha}$ is an asymptotic fixed point in the sense that for any
one-sided sequence $\rseq{s} = s_0 s_1 \dots$ with $s_i \in \{1,2,3\}$,
\begin{equation*}
 \lim_{k\raw\infty} 
\Lambda_{n_1} \Lambda_{n_2} \dots \Lambda_{n_k}(\rseq{s}) = \rseq{\alpha}.
\end{equation*}
Returning to the infimax problem,
for a given asymptotic digit frequency vector $\rho$, 
it was shown in \cite{lexi} that  a three dimensional
continued fraction algorithm using $\rho$ as an
input generates a list $\un \in \what$. The 
infimax for $\rho$ is then  the corresponding $\rseq{\alpha}$
as defined in \eqref{alphaone}.

The main dynamical object of study is  
the orbit closure of $\rseq{\alpha}$ under the left shift,
or $\Dunp := \Cl(o^+(\rseq{\alpha}, S))$. Since
$\Dunp$ is always dynamically minimal, we call it an
\textit{infimax minimal set}.
The study of $\Dunp$ and its associated substitutions
is facilitated
by finding a geometric representation of $\Dunp$ as defined
in \cite{Q}. This
means a concrete, geometrically defined model system
$T:X\raw X$ so that the dynamics of $\Dunp$ under
the shift are embedded in the dynamics of $T$ on $X$ in
a nice way. Specifically, we seek a map $\Upsilon^+:\Dunp\raw
X$ which is a conjugacy,
 $ \Upsilon^+\circ S = T \circ \Upsilon^+$ on the
image of $\Upsilon^+$. In addition, the space $X$ is required to
have a nice partition by which $T$-orbits are coded
so that corresponding sequences are recovered as itineraries.
Geometric representations have been found for many substitutions 
(see \cite{S1} for a summary). 
Depending on the circumstances, 
 different requirements can be made on the map $\Upsilon^+$.
From the ergodic theory perspective, one would like $\Upsilon^+$ to
be  measure preserving and injective 
almost everywhere with respect to the appropriate
invariant measures. We work here in the topological
category and so $\Upsilon^+$ is required to be continuous
and, in fact, $\Upsilon^+$ will be a \homeo\ onto its image.

The geometric models for the S-adic infimax family given here
are elements of a two-parameter 
family of interval translation maps (ITM). These 
maps are generalizations of interval exchange transformations
in which the images of intervals are allowed to overlap (\cite{BK}).
The \textit{attractor} of an ITM is the intersection of
the forward iterates of the entire interval.
For each $\un\in\what$, we show in Theorem~\ref{maintext} that the symbolic
infimax minimal set $\Dunp$ is conjugate via a map $\Upsilon^+$
to the attractor of a (slightly extended) ITM.
The extension of the ITM is necessary  
to make it continuous as is often done with interval
exchange maps (\cite{Keane}).
\begin{thm}\label{main} For each $\un\in\what$, 
the three symbol infimax minimal set has a geometric representation
as the attractor of an Interval Exchange Map (ITM) on three intervals.
\end{thm}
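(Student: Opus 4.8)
The plan is to carry out, for a fixed $\un = n_1 n_2 \cdots \in \what$, an S-adic version of the Rauzy-Canterini-Siegel construction: build the conjugating map $\Upsilon^+$ out of the Dumont-Thomas expansions recorded by the Bratteli-Vershik diagram of $\un$, identify its image as a generalized Rauzy fractal lying on a line, and then recognize the induced map on that fractal as an interval translation map having the fractal as attractor.

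The first step is to record the linear data. Writing $M_n$ for the Abelianization matrix of $\Lambda_n$, one has $\det M_n = -1$ and characteristic polynomial $\lambda^3 - \lambda^2 - (n+1)\lambda + 1$; since this polynomial is positive at $0$, negative at $1$, and tends to $\pm\infty$ at $\pm\infty$, its three roots are one real value greater than $1$, one in $(0,1)$, and one negative value of modulus larger than $1$. In particular each $M_n$ is hyperbolic with a one-dimensional contracting eigendirection. The first substantive task is to show that the forward product $M_{n_1} M_{n_2}\cdots$ possesses a well-defined one-dimensional \emph{asymptotic stable direction}: a line $L = L(\un)$, together with a complementary $M$-equivariant expanding plane, such that on the associated nested family of stable lines $M_{n_1}\cdots M_{n_k}$ contracts at a geometric rate which outpaces the at-most-linear-in-$n_k$ growth of the prefix Abelianizations appearing below. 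Because the $M_n$ are unbounded this needs a genuine estimate; I expect it to follow from a cone-contraction argument for the $M_n$ together with the admissibility constraints on $\un$ coming from the infimax continued fraction algorithm. Let $\pi$ denote the projection onto $L$ along that expanding plane.

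Next I would assemble the Bratteli-Vershik diagram $D(\un)$ whose level-$k$ edges carry the prefix-suffix labels of $\Lambda_{n_k}$, and set up the bijection between admissible infinite paths in $D(\un)$ and sequences of $\Dunp$: a sequence $\rseq{s}\in\Dunp$ decomposes at each level $k$ as $\Lambda_{n_1}\cdots\Lambda_{n_{k-1}}(p_k)$ followed by the corresponding tail, where $p_k$ is the prefix read off the $k$-th edge. Define
\[
\Upsilon^+(\rseq{s}) \;:=\; \sum_{k\ge 1}\pi\bigl(M_{n_1}M_{n_2}\cdots M_{n_{k-1}}\,\ell(p_k)\bigr),
\]
where $\ell$ is the Abelianization map and the empty product is the identity. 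The contraction estimate of the previous step gives absolute convergence and hence continuity of $\Upsilon^+$; injectivity follows because distinct cylinders of $\Dunp$ separate at a uniformly bounded level of $D(\un)$ and $\pi$ is injective on the finitely many integer vectors that can then occur, so $\Upsilon^+$ is a homeomorphism onto its image $\mathcal R := \Upsilon^+(\Dunp)$, the generalized Rauzy fractal, which lies on $L$. Reading off how the shift $S$ acts on Dumont-Thomas expansions — it advances the lowest prefix-suffix label, carrying upward when a label is exhausted — shows that there is a piecewise translation $T$ with $\Upsilon^+\circ S = T\circ\Upsilon^+$: on each subpiece $\mathcal R_a := \Upsilon^+([a]\cap\Dunp)$, $a\in\{1,2,3\}$, the map $T$ is translation by a fixed vector $\tau_a\in L$ computed from the prefix-suffix combinatorics.

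The heart of the argument is the claim that $\mathcal R_1,\mathcal R_2,\mathcal R_3$ are contained in three closed subintervals $I_1,I_2,I_3$ of $L$ that are pairwise disjoint except at endpoints. I would prove this using the order structure special to the infimax family: via $\Upsilon^+$ the lexicographic order on $\Dunp$ is transported (up to the orientation bookkeeping coming from the negative eigenvalue) to the linear order on $L$, so that the cylinders $[1],[2],[3]$, which occupy consecutive lexicographic blocks in $\Dunp$, map into consecutive blocks on $L$; a finite computation with the first few levels of $D(\un)$ then locates the endpoints of the $I_a$ and certifies the separation. Granting this, $T$ — defined on $I := I_1\cup I_2\cup I_3$ by the three translations $\tau_a$ and extended continuously at the finitely many shared endpoints, as is customary for interval exchanges — is a genuine interval translation map on three intervals, conjugate via $\Upsilon^+$ to $(\Dunp, S)$ on $\mathcal R$. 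Finally $T(\mathcal R) = \Upsilon^+(S(\Dunp)) = \mathcal R$, while minimality of $\Dunp$ together with the monotonicity of the sequence $\bigl(T^j(I)\bigr)_{j\ge 0}$ forces $\mathcal R = \bigcap_{j\ge 0}T^j(I)$, so $\mathcal R$ is the attractor of the ITM. The two quantitative points — the contraction estimate that makes $\Upsilon^+$ well defined, and the separation of the intervals $I_a$ — are the main obstacles; the conjugacy and attractor statements are then formal.
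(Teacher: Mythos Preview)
Your overall architecture is essentially the paper's: IPSA/Bratteli-Vershik paths, Dumont-Thomas projection onto an asymptotic stable line obtained by a Birkhoff cone argument, separation of the three cylinder images, and identification of the induced translation as an ITM whose attractor is the Rauzy fractal. Two points, however, need correction.

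First, your injectivity argument for $\Upsilon^+$ does not work and, in fact, the conclusion is false. You write that ``distinct cylinders of $\Dunp$ separate at a uniformly bounded level of $D(\un)$ and $\pi$ is injective on the finitely many integer vectors that can then occur.'' Neither clause holds here: two sequences may first differ at an arbitrarily deep level, and because the family is infinite the prefix Abelianizations at level $k$ range over the unbounded set $\{0,\ \be_3 + m\be_1 : 0\le m\le n_{k+1}\}$, so there is no finite list to check. More importantly, the paper shows that $\Upsilon^+$ is \emph{not} injective: the subpieces $\cR_a$ genuinely share endpoints, and those common values are hit by sequences in different cylinders (Theorem on the $\cR_a$ and the list of exceptional pairs in the theorem on $\Upsilon$). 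What the paper proves instead is that $\Upsilon^+$ is \emph{weakly order reversing} from the lexicographic order to the real order; this is the mechanism that controls injectivity, and it is only after a Keane-type doubling of the backward orbits of the two break points (producing an extended interval $\tilde I$ and an extended continuous ITM $\tilde T$) that the extended map $\tilde\Upsilon^+$ becomes a genuine homeomorphism onto the extended attractor. Your phrase ``extended continuously at the finitely many shared endpoints'' understates this: it is an infinite orbit-splitting, not a finite patch.

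Second, your attractor identification is incomplete. From $T(\mathcal R)=\mathcal R$ you only get $\mathcal R\subset\bigcap_j T^j(I)$; minimality of $\mathcal R$ does not by itself rule out extra points in the attractor. The paper closes this gap by invoking external results: since the image contains an aperiodic minimal set, the ITM has no periodic orbits, and an aperiodic ITM of this type has a unique minimal set, which must then equal the attractor. If you want a self-contained argument you will need something of that strength. (Minor side remark: no ``admissibility constraints from the continued fraction'' are needed for the stable direction --- the paper obtains a uniform Birkhoff contraction ratio for all $\un\in\what$ by working with $B_n=\tau A_n^{-T}\tau$ and bounding $d(B_cB_bB_a)$ uniformly; and the convergence of the Dumont-Thomas sum uses not a generic ``contraction beats linear growth'' estimate but the specific fact that $\phi(\Lambda^{(k)}(31^m))\le \ell_3^{(k)}$ for every admissible $m$.)
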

In addition, the conjugacy respects
order structures: the lexicographic order on the symbolic
minimal set is reversed under the conjugacy onto the
ITM attractor inside the unit interval.

Previously Bruin and Troubetzkoy
have shown that each $\Dun$ is isomorphic to the attractor of
an ITM (\cite{BT},
see also Section 5 in \cite{BK}). We
extend these results obtaining a full homeomorphic conjugacy
and in addition study the two-sided infimax minimal
set. The starting point in \cite{BT} was the natural
 renormalizations of a family of ITM. The substitutions
\eqref{subdef} then arise as the symbolic descriptors of this process. 
On the other hand, motivated by the infimax problem,
we begin here with the substitutions and sequences themselves.
 Using  generalizations of methods commonly used for single
substitutions we find  
the ITM geometric representation as a direct and natural consequence
of the structure of the generalized Rauzy
fractal and its induced transformation under the shift.

The methods we use have their origin in Rauzy's classic
papers (\cite{R1}, \cite{R2}, \cite{R3})
and their subsequent development
by many authors.
Most significant and relevant here is the process
laid out by Canterini and Siegel (\cite{CS1}, \cite{CS2})
and from a somewhat different angle
by Holton and Zamboni (\cite{HZ1}, \cite{HZ2}).
The paper \cite{ABB} provides an excellent exposition of the process
and related constructions for a very important special case.
We adopt these single substitution methods to the S-adic case.
While often the generalizations are reasonably straightforward,
they differ in enough detail that an independent,
self-sufficient treatment is required.

The main idea in this geometric representation process 
 is projection onto the
stable subspace of the Abelianizations of the substitution.
For the  family \eqref{subdef} the Abelianizations are
\begin{equation}\label{subabel}
 A_n := \left[\begin{array}{ccc} 0 & n+1 & n\\
 1 & 0 & 0\\ 
0 & 1 & 1
\end{array}\right].
\end{equation}
Each $A_n$ has two eigenvalues outside the unit circle
and one inside (proof of Lemma 52 in \cite{beta}) 
and so each substitution \eqref{subdef}
is inverse-Pisot, unimodal and primitive.
It is shown in Theorem~\ref{mainconv} below that for 
each $\un\in\what$  the limit
\begin{equation*}\label{limabel}
\lim_{k\raw_\infty} A_{n_1} A_{n_2} \dots A_{n_k}
\end{equation*}
has a well defined asymptotic (or generalized)
one-dimensional stable direction
$\vell_\un$.
A similar argument  shows the existence of 
an asymptotic two-dimensional unstable subspace. However,
if the $n_i$ grow sufficiently fast there is not
an asymptotic one-dimensional strongest unstable direction (Theorems
11 and 12 in \cite{BT} and Theorem 27 in \cite{lexi}).
This implies that in these cases the asymptotic digit frequency vector of
$\rseq{\alpha}$ does not exist and, in addition, $\Dunp$ is
not uniquely ergodic (\cite{BT}).

To achieve the projection, Rauzy's original method was to embed
the Abelianization of the sequence $\rseq{\alpha}$  in $\R^3$
 and then project its vertices down to the stable subspace and take
the closure. We adapt the alternative route developed by
Canterini and Siegel (\cite{CS1}, \cite{CS2})
 and use  the machinery of the Prefix-Suffix Automaton.
 This automaton
is a particularly useful way of labeling and ordering  a 
Bratelli diagram based on a single substitution. 
The sequence of edge labels
in an infinite path naturally yield both the Dumont-Thomas
prefix-suffix expansion
of a symbolic sequence (\cite{DT}, \cite{DT2}) and a map to the
stable subspace of the Abelianization.
In the natural generalization to the S-adic case given in Example 3.5
of \cite{Sadic} and used below, each level of  the diagram
corresponds to a substitution from the list $\Lambda_{n_1}, 
\Lambda_{n_2}, \dots$. The sequence of edge labels $(u_i,a_i, v_i)$
in an infinite path  then yields  the  
 S-adic  version  
of the Dumont-Thomas prefix-suffix expansion
as in  formula (5) in \cite{Sadic} 
\begin{equation*}
\dots \Lambda_{n_2}\Lambda_{n_1}(u_2) \Lambda_{n_1}(u_1)\,
  u_0.a_0 v_0  \,  \Lambda^{(1)}(v_1) \Lambda_{n_2}\Lambda_{n_1}(v_2) \dots. 
\end{equation*}
With the appropriate
alterations for the S-adic situation, this yields
a map from the path space of the Bratelli diagram
to $\Dunp$ as well as a projection onto  the asymptotic
stable subspace of the infinite composition of  the Abelianizations
in \eqref{limabel}.
The inverse of the first map composed with the second map
yields the map $\Upsilon^+$ from $\Dunp$ to the real line 
given in Theorem~\ref{main} above.  The image
of $\Upsilon^+$ is sometimes called a generalized Rauzy fractal, and in
our case it is always a Cantor set embedded in an interval

The final ingredient in this
 geometric representation process is the subdivision
of the  Rauzy fractal 
into pieces corresponding to the images of symbolic
cylinder sets under $\Upsilon^+$.
The importance of these subpieces is that there is a
natural translation induced on them by the shift map
on $\Dunp$. Thus the process yields a  geometric representation
as long as the subpieces are disjoint almost
everywhere and so  proving this disjointness is often the
central problem in this process of geometric representation. 
For the infimax family, these subpieces are again
Cantor sets and we show in Theorem~\ref{inequality} that their
convex hulls (obviously intervals) can only 
intersect at their endpoints. The induced map on
these interval convex hulls is the representing Interval
Exchange map (ITM)
and we then show that the Rauzy fractal is, in fact,
the attractor of this ITM.

While the construction of an infimax minimal set requires
infinitely many choices to designate the list of defining substitutions in 
\eqref{alphaone}, one of the striking features of 
the geometric representation is that it 
faithfully describes an infimax minimal set by specifying just
two parameters in the family of ITM. The geometric representation
has additional consequences. For example, Boshernitzan and Kornfeld
note in \S 7 of \cite{BK} that after using the defining intervals
of the ITM to code
orbits it is straightforward to show
 that the number of distinct words of length
$n$ in itineraries of orbits can grow at most at a polynomial
rate. This implies that the ITM have zero topological entropy.
Thus using the geometric representation all the infimax minimal
sets also have zero entropy (this also follows from more general,
 more recent results; see Theorem 4.3 in \cite{Sadic}).
 Boshernitzan and Kornfeld
ask whether this factor complexity growth
rate is actually linear. In the special case of what are called
infimax minimal sets here, Cassaigne and Nicolas
showed that the factor complexity satisfies
$p(j) \leq 3j$  (\cite{CN}).

In many of the geometric representations of substitutions
in the literature the symbolic minimal set is represented
by either an interval exchange map or a toral translation.
This provides a great deal of information about the symbolic
minimal set, in particular, about its spectrum. Compared to interval
exchange maps, ITM are poorly understood and there is little
known about their spectrum. The ITM in the representing family
here have a rich variety of behaviours like non-unique
ergodicity and thus provide a good model problem for future development:
one can work jointly with the ITM and the symbolic, 
S-adic description.

There are a number of important features that are specific to the
family studied here. The first is that a 
list of substitutions $\Lambda_{n_1}, \Lambda_{n_2}, \dots $
when it acts on  bi-infinite sequences 
yields asymptotic period-two points rather than a fixed point.
As a consequence, the resulting Bratteli-Vershik diagram
is not properly ordered: it has  two maximal elements
and one minimal element (for background on 
Bratteli-Vershik diagrams and adic transformations see  \cite{HPS}
and for their use with substitutions \cite{VL} and \cite{DHS}).
 This complicates the
reading off the Dumont-Thomas expansion of a sequence from
the labels on edges of the maximal paths and necessitates
the eventual use of a map from $\Dunp$ back to the path
space in Section~\ref{mainmapsect}.

Also, the improper order
 implies that the Vershik map on the path space
cannot be globally defined. As done in \cite{VL}
and many subsequent papers, it can be defined  
almost everywhere 
but since we are working in the topological
category, we require all maps to be globally defined.
Our main objective is a map from $\Dunp$ to $\R$, and so
the path space is a convenient intermediate structure,
but we never need to consider the dynamics on it.
Thus the order on the 
Bratteli diagram and the Vershik map are not utilized here.
 However, the labeling of edges in the diagram by prefixes and suffixes
is of crucial importance
and so we have adapted the terminology  ``infinite prefix-suffix
automaton'' (IPSA) for the labeled Bratteli diagram corresponding
to a list of substitutions indexed by $\un\in\what$.

Another special feature  is the central role of the
lexicographic order on the symbol space and its relation
under the representation map to the usual linear order on
the real line. Note that each substitution $\Lambda_n$
preserves the lexicographic 
order (Lemma 2 in \cite{lexi})
and that the family arose as the solution to
the infimax question  which depends
fundamentally on the lexicographic order.
In the final analysis it 
is the relation of the orders on sequences and the reals
which yields the fundamental fact of
the disjointness of the subpieces of the Rauzy fractal.

Finally, in contrast to much of the existing work on S-adic
systems, the infimax family contains infinitely
many substitutions and further, our results hold for 
all sequences $\un\in\what$ rather than just 
a large, say full measure, subcollection. 

While our main results concern the one-sided shift space,
 a number of steps in the representation process are technically 
simpler using two-sided infinite sequences. This has the added
benefits of yielding useful  results about the relationship
between the
 two-sided version $\Dun$ and the one-sided version $\Dunp$ of
the infimax minimal set. For example, in Theorem~\ref{onesided}
we show that the projection $\pi:\Dun\raw\Dunp$ is
injective except on the forward orbits of the asymptotic
period two points, or  informally, the one-sided version
is obtained by collapsing a single pair of orbits of the two-sided
version.
This in turn implies that the left shift has unique inverses
in $\Dunp$ except on the 
$\rseq{\alpha}$ defined in \eqref{alphaone}.   

This paper deals primarily with the geometric representation
of the S-adic family with $N=3$ symbols: more detailed 
results about the infimax minimal
sets and their languages are saved for a later paper. 
We remark on the $N\not= 3$ case in the last
section.

\section{Preliminaries}
We start with some basic definitions about words, sequences and substitutions.
The alphabet here will always be  $\cA = \{1, 2, 3\}$.
The length of a finite word $w$ is denoted $|w|$, and 
the empty word $\epsilon$ has $|\epsilon| = 0$. A \textit{bi-infinite
sequence} $\us$ is an element of $\Sigma_3 = \cA^\Z$ and is written
with a decimal point between the zeroth and minus first
symbols, $\us = \dots s_{-2} s_{-1} . s_0 s_1 s_2 \dots$.
A \textit{right infinite sequence} has the form $s_0 s_1 \dots$
and we use an under-arrow to indicate it
 $\rseq{s} = s_0 s_1 \dots$, and a \textit{left infinite
sequence} is written $\lseq{s} = \dots s_{-2} s_{-1}$.
The collection of   right infinite sequences
is denoted  $\Sigma_3^+$. 

The spaces $\Sigma_3$ and $\Sigma_3^+$ are given the topology induced
by the metric
$
d(\us,\us') = 1/(1 + M)$ where 
$M = \min\{|i|\colon s_i \not= s'_i\}$.
The left shift $S$ acting on  $\Sigma_3$ is 
$S(\us) = \dots s_{-2} s_{-1}  s_0. s_1 s_2 \dots$
and acting on $\Sigma_3^+$ is  $S(\rseq{s}) = . s_1 s_2 \dots$. In the
dynamics literature $\sigma$ is usually used for the shift
and in  the substitutions literature $\sigma$ usually denotes
a substitution. To avoid confusion we refrain from using
$\sigma$ altogether.

A \textit{pointed word} is word with decimal point placed between
two of its symbols or at  the beginning or end of the word.
The shift acts on pointed words as long as its action does not
move the decimal point beyond the beginning or end of the word.
A \textit{pointed one-sided sequence} has the form 
$\dots s_{-2} s_{-1} . s_0 s_1 s_2 \dots s_n \epsilon$ or
$\epsilon s_{-n} \dots s_{-2} s_{-1} . s_0 s_1 s_2 \dots$. The
empty symbol $\epsilon$ is included to indicate the end
or beginning of the pointed one-sided sequence.  The shift
also acts on pointed one-sided sequences again with the
proviso that the decimal point cannot move beyond the
end.

A substitution $\Lambda$ is specified by assigning a nonempty word
$\Lambda(a)$ to each symbol $a\in\cA$. It acts on sequences,
words and pointed objects yielding 
another object of the same type by 
respecting the decimal point, so, for example,
 \begin{equation*}
\Lambda(\epsilon s_{-2} s_{-1}) . s_0
s_1 s_2 \dots = \epsilon \Lambda(s_{-2}) \Lambda(s_{-1}) . 
\Lambda(s_0) \Lambda(s_1) \Lambda(s_2) \dots.
\end{equation*}

For a \homeo\ $h\colon X\raw X$, the full orbit of
a point $x$ is \newline
$o(x, h) = \{ \dots, h^{-2}(x), h^{-1}(x), x,
h(x), h^2(x), \dots \}$ and its forward orbit is
$o^+(x, h) = \{  x,
h(x), h^2(x), \dots \}$. When $f:X\raw X$ is not injective,
$o^+(x,f)$ is defined in  the same way.

\section{The Infinite Prefix-Suffix Automaton}
\subsection{Definitions}
Fix a sequence $\un\in \what$. Our main object of study is
the sequence of substitutions $\Lambda_{n_1}, \Lambda_{n_2}, \dots$.
For each $k>0$, 
let 
\begin{equation*}
\Lambda^{(k)} = \Lambda_{n_1} \circ \Lambda_{n_2}
\dots\circ \Lambda_{n_k}.
\end{equation*}
 For a subcollection of indices
$n_j n_{j+1} \dots n_{j+k}$ write 
$\Lambda_{n_j n_{j+1} \dots n_{j+k}} =
\Lambda_{n_j} \circ \Lambda_{n_{j+1}}
\dots\circ \Lambda_{n_{j+k}}$, and so 
$\Lambda^{(k)} = \Lambda_{n_1 n_{2} \dots n_{k}}$. 

We now define the principal tool in this paper, the
\textit{Infinite Prefix-Suffix Automaton} (IPSA).  
A related automaton for a single substitution was contained in 
 Rauzy's classic
papers (\cite{R1}, \cite{R2}, \cite{R3}) and similar constructions 
were present in other seminal works in other
fields (see page 218 of \cite{S1} for some history). 
The automaton was  formalized, extended and utilized in 
\cite{CS1} and \cite{CS2} and independently in a slightly different form
in \cite{HZ2}. The version we use here is the S-adic generalization described
in  Example 3.5 of \cite{Sadic}.

The \textit{Infinite Prefix-Suffix Automaton} (IPSA)
or Bratteli-Vershik diagram
 associated with $\un\in \what$ is 
an infinite directed graph built in levels. Each level of the graph contains
three nodes or states 1, 2, 3 and the levels are indexed by 
$0, 1, 2, \dots$.  There is a directed edge from
state $a$ on level $n-1$ to state $b$ on level $n$
if and only if $\Lambda_n(b) = uav$ for some perhaps empty words 
$u$ and $v$. This edge is then labeled $(u, a, v)$. Note that
that level zero of our diagram has three states and not
a single root state as is common in the literature. 
 See Figure 1.

\begin{figure}[t]\label{fig1}
\centering
\includegraphics[width=0.5\textwidth]{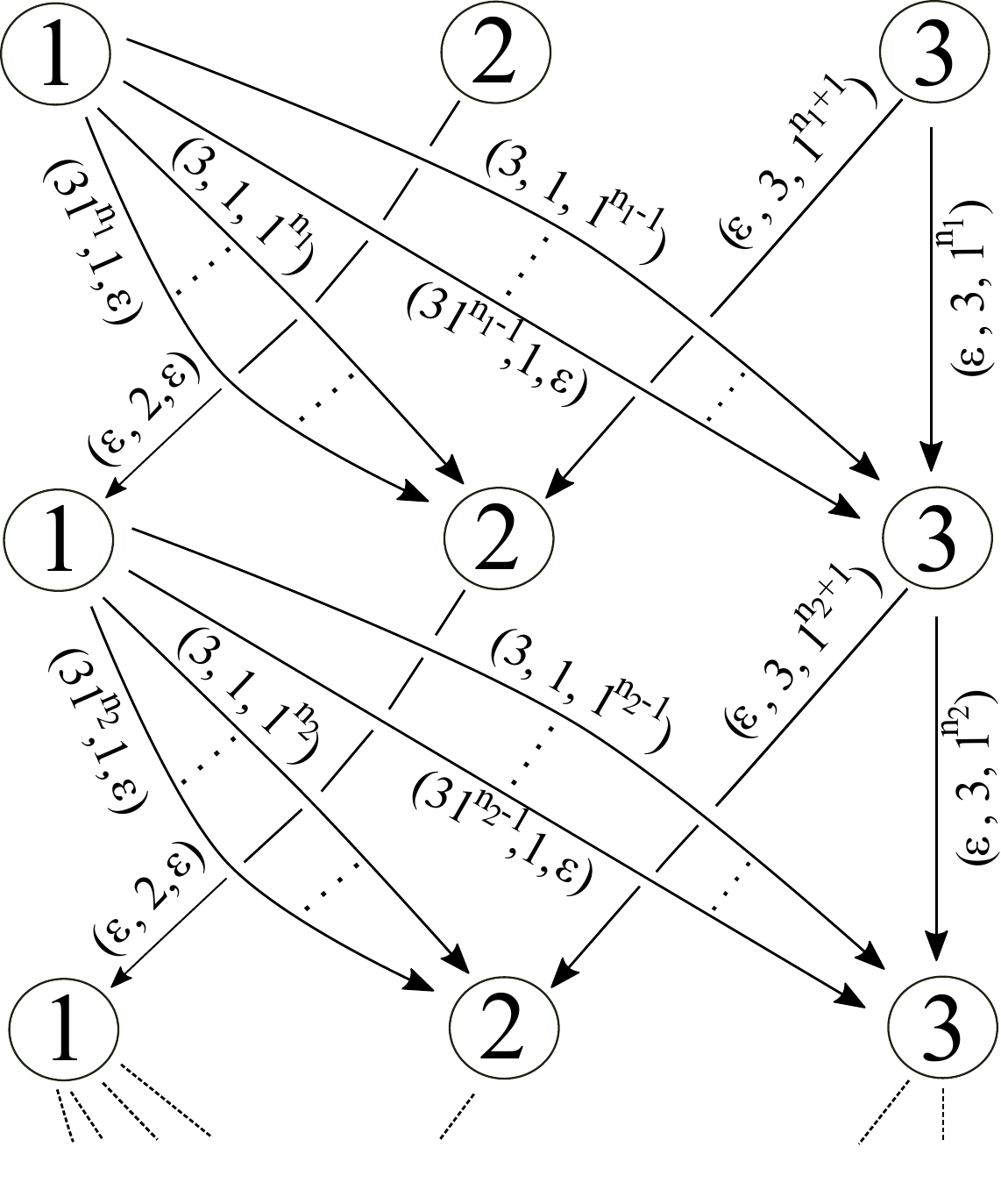}
\caption{First two levels of the IPSA}
\end{figure}
It is easy to check that an infinite path $\Gamma$ in the IPSA is uniquely
specified by its sequence of labels and we write
 $\Gamma =\Gamma_0 \Gamma_1 \dots$ or $\Gamma = 
(u_0, a_0, v_0), (u_1, a_1, v_1), \dots$ where for all $i$,
$\Lambda_{n_i}(a_i) = u_{i-1} a_{i-1} v_{i-1}$. Note that the 
sequence of edges is indexed by $0, 1, 2, \dots$ while
the sequence of indices $\un$ is indexed by $1, 2, 3, \dots$.
The collection of all infinite paths in the IPSA generated by
an $\un$ is denoted $\cP_\un$ or just $\cP$. The space $\cP$ is
a compact metric space under the metric 
$d(\Gamma, \Gamma') =  1/(N+1)$ where 
$N = \min\{i: \Gamma_i \not= \Gamma_i'\}$.

\subsection{The Dumont-Thomas expansion; the word and sequence maps}
A finite path
$\gamma = \Gamma_0, \Gamma_1 \dots, \Gamma_{k}$
in the IPSA is said to have length $k+1$ and the collection
of all such length $k+1$ paths is denoted $\cP^{(k+1)}$.
We now define the maps which assign a Dumont-Thomas expansion
 (\cite{DT}, \cite{DT2}) of a word or sequence to each finite
 or infinite path.

For each $k$ the
\textit{word map} assigns a finite pointed word to
a length $k+1$ path via
\begin{equation}\label{wordmapdef}
\cW(\gamma) = \Lambda^{(k)}(u_k) \dots \Lambda^{(1)}(u_1)\,
  u_0.a_0 v_0  \,  \Lambda^{(1)}(v_1) \dots \Lambda^{(k)}(v_k).
\end{equation}
Given an infinite path $\Gamma = \Gamma_0, \Gamma_1 \dots$, for each
$k$ let $\Gamma^{(k)}$ be the length $(k+1)$ path 
$ \Gamma^{(k)}= \Gamma_0, \dots, \Gamma_{k}$.
The \textit{sequence map} assigns a pointed  sequence
to each infinite path via
\begin{equation}\label{seqmapdef}
\cW(\Gamma) = \lim_{k\raw\infty} \cW(\Gamma^{(k)}) =
\dots \Lambda^{(2)}(u_2) \Lambda^{(1)}(u_1)\,
  u_0.a_0 v_0  \,  \Lambda^{(1)}(v_1) \Lambda^{(2)}(v_2) \dots 
\end{equation}
In most cases the sequence $\cW(\Gamma)$ will be bi-infinite,
but in certain special cases studied in Section~\ref{sectspec}
it will be infinite in only one direction. 

The first lemma describes in more detail
how the word map gives a correspondence between
paths in the IPSA  and symbolic words. As an example,
let $\Gamma_0 \Gamma_1 = (u_0, a_0, v_0), (u_1, a_1, v_1)$ be a length 
two path that terminates in the node labeled $a_2$. 
Thus from the definitions and keeping track of the decimal points  
we have $S^{j_1} \Lambda_{n_1}(\epsilon.a_1) = u_0.a_0 v_0 $ and
$S^{j_2} \Lambda_{n_2}(\epsilon.a_2) = u_1.a_1v_1 $ for
some $0\leq j_i < |\Lambda_{n_i}(a_i)|$.  Evaluating the
word map we have
\begin{equation*}
\begin{split}
\cW(\Gamma_0\Gamma_1) &= \Lambda_{n_1}(v_1) u_0.a_0.v_0 \Lambda_{n_1}(v_1)
= \Lambda_{n_1}(v_1) S^{j_1} \Lambda_{n_1}(\epsilon.a_1) \Lambda_{n_1}(v_1)\\
&= S^{j_1}\Lambda_{n_1}(u_1.a_1v_1) = 
S^{j_1}\Lambda_{n_1}(S^{j_2} \Lambda_{n_2}(\epsilon.a_2))
=S^{j_1+j_2}\Lambda^{(2)}(\epsilon.a_2)
\end{split}
\end{equation*}
Thus for 
any length two path which terminates at the node
$a_2$, its word map image is a shift of $\Lambda^{(2)}(\epsilon.a_2)$. In fact,
Lemma~\ref{form} and Theorem~\ref{assignment} together will
show that each shift of $\Lambda^{(2)}(\epsilon.a_2)$ corresponds to
  one and only one length two path terminating at $a_2$.
In general,  
let $\cP^{(k)}(a)$ be the subcollection of length-$(k)$ paths which 
terminate at node $a$ and so $\Lambda_{n_{k}}(a) = u_{k-1} a_{k-1} v_{k-1}$. 
The first lemma shows that for any path in $\cP^{(k)}(a)$ its word
map image is a shift of $\Lambda^{(k)}(\epsilon.a)$. Theorem~\ref{assignment}
will show that the correspondence is injective. Part (b)
of the lemma shows how to replace the inner portion
of a  sequence map image with the form of the image of an initial
segment of the path given in part (a).   
\begin{lem}\label{form} For all $\un\in \what$,
\begin{enumerate}[(a)]
\item if $\gamma \in \cP^{(k)}(a)$, then
$\cW(\gamma)= S^j(\Lambda^{(k)}(\epsilon.a))$ for some 
 $0 \leq j < |\Lambda^{(k)}(a)|$.
\item If $\Gamma\in\cP$, then for all $k>0$ there exists 
$0 \leq j < |\Lambda^{(k)}(a_k)|$ so that
 $$\cW(\Gamma) = \dots \Lambda^{(k+1)}(u_{k+1})\, \Lambda^{(k)}(u_k)\,
S^j(\Lambda^{(k)}(\epsilon.a_k))\,  \Lambda^{(k)}(v_k)\, \Lambda^{(k+1)}(v_{k+1})\dots
$$ 
where $S^j(\Lambda^{(k)}(\epsilon.a_k)) = \cW(\Gamma^{(k-1)})$.
\end{enumerate}
\end{lem}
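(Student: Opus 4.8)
The plan is to prove part (a) by induction on $k$, unwinding the definition of the word map one level at a time, exactly as in the worked length-two example preceding the lemma. For the base case $k=1$, a path $\gamma = \Gamma_0 = (u_0,a_0,v_0)\in\cP^{(1)}(a)$ with $a = a_1$ satisfies $\Lambda_{n_1}(a_1) = u_0 a_0 v_0$, and $\cW(\gamma) = u_0.a_0v_0$ by \eqref{wordmapdef}; since this is just $\Lambda^{(1)}(a_1)$ with the decimal point moved to after $u_0$, we have $\cW(\gamma) = S^{j}(\Lambda^{(1)}(\epsilon.a_1))$ with $j = |u_0|$, and $0\le j < |\Lambda^{(1)}(a_1)|$ because $a_0$ is a nonempty symbol sitting to the right of the point. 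For the inductive step, suppose the claim holds for length $k-1$ and let $\gamma = \Gamma_0,\dots,\Gamma_{k-1}\in\cP^{(k)}(a)$ with $a = a_k$, so $\Lambda_{n_k}(a_k) = u_{k-1}a_{k-1}v_{k-1}$. Write $\gamma' = \Gamma_0,\dots,\Gamma_{k-2}$, which lies in $\cP^{(k-1)}(a_{k-1})$; by the inductive hypothesis $\cW(\gamma') = S^{j'}(\Lambda^{(k-1)}(\epsilon.a_{k-1}))$ for some $0\le j' < |\Lambda^{(k-1)}(a_{k-1})|$. Comparing \eqref{wordmapdef} for $\gamma$ and for $\gamma'$, the only new factors are $\Lambda^{(k-1)}(u_{k-1})$ prepended and $\Lambda^{(k-1)}(v_{k-1})$ appended, so
\[
\cW(\gamma) = \Lambda^{(k-1)}(u_{k-1})\,\cW(\gamma')\,\Lambda^{(k-1)}(v_{k-1})
 = \Lambda^{(k-1)}(u_{k-1})\,S^{j'}\Lambda^{(k-1)}(\epsilon.a_{k-1})\,\Lambda^{(k-1)}(v_{k-1}).
\]
Because $\Lambda^{(k-1)}$ is a substitution it respects the decimal point, so $\Lambda^{(k-1)}(u_{k-1})\,\Lambda^{(k-1)}(\epsilon.a_{k-1})\,\Lambda^{(k-1)}(v_{k-1}) = \Lambda^{(k-1)}(u_{k-1}.a_{k-1}v_{k-1}) = \Lambda^{(k-1)}(S^{\,|u_{k-1}|}(\epsilon.\,\Lambda_{n_k}(a_k)))$; pulling the shift out through $\Lambda^{(k-1)}$ scales the offset, giving $\cW(\gamma) = S^{j}(\Lambda^{(k)}(\epsilon.a_k))$ with $j = j' + |\Lambda^{(k-1)}(u_{k-1})|$. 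The bound $0\le j < |\Lambda^{(k)}(a_k)|$ follows since $j'$ already lies below $|\Lambda^{(k-1)}(a_{k-1})|$ and $\Lambda^{(k-1)}(u_{k-1}a_{k-1}v_{k-1}) = \Lambda^{(k)}(a_k)$, so the decimal point never escapes the word. I would verify carefully that the shift-past-substitution identity $\Lambda(S^m(w)) = S^{|\Lambda(w_0\dots w_{m-1})|}(\Lambda(w))$ for pointed words is exactly the decimal-point-respecting convention set up in the Preliminaries — this is the one place where the S-adic bookkeeping could go wrong and is, I expect, the main (though minor) obstacle.

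For part (b), the key observation is that $\cW(\Gamma^{(k-1)})$ is precisely the word-map image of the length-$k$ initial segment $\Gamma_0,\dots,\Gamma_{k-1}$, which terminates at $a_k$ and hence lies in $\cP^{(k)}(a_k)$; by part (a) it equals $S^j(\Lambda^{(k)}(\epsilon.a_k))$ for the appropriate $j$ with $0\le j < |\Lambda^{(k)}(a_k)|$. Now I split the infinite product \eqref{seqmapdef} for $\cW(\Gamma)$ at level $k$: the factors with index $> k$ form the two tails $\dots\Lambda^{(k+1)}(u_{k+1})$ on the left and $\Lambda^{(k+1)}(v_{k+1})\dots$ on the right, and everything with index $\le k$ — namely $\Lambda^{(k)}(u_k)\dots u_0.a_0v_0\dots\Lambda^{(k)}(v_k)$ — is by definition $\cW(\Gamma^{(k)})$. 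But $\cW(\Gamma^{(k)})$ differs from $\cW(\Gamma^{(k-1)})$ only by the outermost pair of factors $\Lambda^{(k)}(u_k)$ and $\Lambda^{(k)}(v_k)$, so
\[
\cW(\Gamma^{(k)}) = \Lambda^{(k)}(u_k)\,\cW(\Gamma^{(k-1)})\,\Lambda^{(k)}(v_k)
 = \Lambda^{(k)}(u_k)\,S^j(\Lambda^{(k)}(\epsilon.a_k))\,\Lambda^{(k)}(v_k),
\]
and substituting this block back into the split product \eqref{seqmapdef} yields exactly the displayed formula in (b). The convergence of the limit defining $\cW(\Gamma)$ is already guaranteed by the discussion following \eqref{seqmapdef}, so nothing new is needed there; the content of (b) is purely the algebraic regrouping together with part (a).
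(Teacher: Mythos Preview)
Your proof is correct. Part (b) is handled identically to the paper: split the infinite product at level $k$, observe that the inner block is $\cW(\Gamma^{(k-1)})$, and invoke (a).

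In part (a) you take a genuinely different inductive route. The paper peels off the \emph{first} edge: from $\gamma = \Gamma_0,\dots,\Gamma_k$ it forms $\gamma' = \Gamma_1,\dots,\Gamma_k$, viewed as a path in the IPSA for the shifted index sequence $\un' = n_2 n_3\dots$, and uses the identity $\cW(\gamma) = S^{|u_0|}\Lambda_{n_1}(\cW'(\gamma'))$. You instead peel off the \emph{last} edge: from $\gamma = \Gamma_0,\dots,\Gamma_{k-1}$ you form $\gamma' = \Gamma_0,\dots,\Gamma_{k-2}$ in the same IPSA and use $\cW(\gamma) = \Lambda^{(k-1)}(u_{k-1})\,\cW(\gamma')\,\Lambda^{(k-1)}(v_{k-1})$. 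Your route is cleaner and more self-contained, since it stays inside a single $\un$ and avoids introducing the auxiliary word map $\cW'$; it is also literally the same recursion used to prove (b), so the two parts mesh nicely. The paper's route, on the other hand, establishes as a by-product the desubstitution identity $\cW(\Gamma) = S^{|u_0|}\Lambda_{n_1}(\cW'(\Gamma'))$, which is reused in the Remark following Theorem~\ref{assignment} and, crucially, in the inductive step of Lemma~\ref{stronginjective}. So the paper's choice is motivated by downstream applications rather than by the needs of this lemma itself. One small stylistic point: your phrase ``pulling the shift out through $\Lambda^{(k-1)}$ scales the offset'' is slightly misleading --- what is actually happening is that prepending the word $\Lambda^{(k-1)}(u_{k-1})$ to a pointed word simply adds $|\Lambda^{(k-1)}(u_{k-1})|$ to the position of the decimal point; no commutation of shift and substitution is needed at that step.
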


\begin{proof}
We prove (a) by induction on $k$, with  the case $k=0$ following
from the definition of the IPSA. So assume the result is true
for length $k$ paths.
Given $\gamma = \Gamma_0, \Gamma_1, \dots, \Gamma_{k}$
with $a_{k+1} = a$, 
we first show that $\cW(\gamma)$
has the required form. 
Form the
length $k$ path 
$\gamma' =  \Gamma_1, \Gamma_2, \dots, \Gamma_{k}$ in the IPSA of
$\un' = n_2 n_3 \dots$. Using the inductive hypothesis 
on $\cW'$, the assignment associated with $\un'$, we have
a $0 \leq j' < |\Lambda_{n_2 \dots n_{k}}(a)|$ with
\begin{align*}
S^{j'}(\Lambda_{n_2 \dots n_{k+1}}(\epsilon.a)) &= \cW'(\gamma')\nonumber\\ 
&= \Lambda_{n_2 \dots n_k}(u_k) \dots \Lambda_{n_2}(u_2)\, u_1.a_1 v_1 
\, \Lambda_{n_2}(v_2) \dots \Lambda_{n_2 \dots n_k}(v_k).
\end{align*}
Thus
\begin{equation*}
\cW(\gamma) = S^{|u_0|} (\Lambda_{n_1} \cW'(\gamma')) 
= S^{|u_0|} (\Lambda_{n_1} S^{j'}(\Lambda_{n_2 \dots n_{k}}(\epsilon.a)))
= S^{j}(\Lambda^{(k)}(\epsilon.a)),
\end{equation*}
for some $j\geq 0$ where since we know  that $\cW(\gamma)$ is a finite, pointed
word, $ j\leq |\Lambda^{(k)}(a)|$. But since
from the IPSA, $a_0 = \epsilon$ is impossible, in fact
$j < |\Lambda^{(k)}(a)|$, finishing (a).

For (b) it follows immediately from the definition that 
for any $k>0$, 
$$\cW(\Gamma) = \dots \Lambda^{(k)}(u_k) \cW( \Gamma^{(k-1)} )
\Lambda^{(k)}(v_k) \dots,$$
and so (b) then follows from (a).
\end{proof} 
\begin{rem}\label{negj}
Letting $j' =  |\Lambda^{(k)}(a_k)| - j$, we also have
 $$\cW(\Gamma) = \dots \Lambda^{(k+1)}(u_{k+1})\, \Lambda^{(k)}(u_k)
S^{-j'}\Lambda^{(k)}(a_k.\epsilon) \,
 \Lambda^{(k)}(v_k) \Lambda^{(k+1)}(v_{k+1})\dots
$$
\end{rem}

\subsection{Some special paths and sequences}\label{sectspec}
The sequence map given in \eqref{seqmapdef} yields a  two-sided
sequence for most paths in $\cP$. The paths $\Gamma$ for which $\cW(\Gamma)$
is a pointed one-sided sequence require individual consideration 
and fall into three classes. Recall that two paths $\Gamma$ and $\Gamma'$
are said to be \textit{tail equivalent} if there exists a $k$ so that
$\Gamma_i = \Gamma_i'$ for all $i\geq k$. This obviously yields
an equivalence relation on the path space $\cP$.
\begin{itemize}
\item The set $\spec_1$ will consist of the tail equivalence class of
\begin{align}\label{speconedef}
\Gamma_\beta &:= (3 1^{n_{1}}, 1,\epsilon), 
 \prod_{i=1}^\infty ((\epsilon, 2, \epsilon), 
(3 1^{n_{2i + 1}}, 1,\epsilon))\nonumber\\
 & \text{or all}\  \Gamma = \Gamma_0, \dots, \Gamma_w,
 \prod_{i=1}^\infty ((\epsilon, 2, \epsilon), 
(3 1^{n_{w + 2i + 1}}, 1,\epsilon)) 
\end{align}
with $\Gamma_w\not = ( 3 1^{n_{w+1}}, 1, \epsilon)$
and $w$ even.
\item The set $\spec_\hone$ will consist of the tail equivalence class of 
\begin{align}\label{spechonedef}
\Gamma_\hbeta &:= 
 \prod_{i=1}^\infty  ((\epsilon, 2, \epsilon), 
(3 1^{n_{2i}}, 1,\epsilon))\nonumber\\
& \text{or all}\  \Gamma = \Gamma_0, \dots, \Gamma_w,
 \prod_{i=1}^\infty \left((\epsilon, 2, \epsilon), 
(3 1^{n_{w + 2i + 1}}, 1,\epsilon)\right) 
\end{align}
with $\Gamma_w\not = ( 3 1^{n_{w+1}}, 1, \epsilon),$
and $w$ odd.
\item The set $\spec_2$ will consist of the tail equivalence class of
\begin{equation}\label{spectwodef}
\Gamma_\alpha :=  \prod_{i=1}^\infty (\epsilon, 3, 1^{n_i})
\ \text{or all}\   \Gamma = \Gamma_0, \dots, \Gamma_w,
 \prod_{i=w+1}^\infty (\epsilon, 3, 1^{n_i})
\end{equation}
with $\Gamma_w\not = (\epsilon, 3, 1^{n_{w+1}})$.
\item Finally,
$\cN = \cP\setminus(S_1\cup S_\hone\cup S_2)$. Note that
$\Gamma\in\cN$ if and only if there exist arbitrarily large $k$ with
$u_k\not= \epsilon$ and 
 there exist arbitrarily large $k$ with
$v_k\not= \epsilon$. Thus $\cN$ consists of those paths
for which $\cW(\Gamma)$ is a two-sided infinite sequence.
\end{itemize}

 As mentioned in the introduction, we don't make
explicit use of the substitution induced 
order on the path space but its description will clarify what follows.
On a  path space derived from a general diagram, Vershik defines
a partial order which restricts to a total order on each tail equivalence
class. It is built from a total order on the outgoing edges
at each vertex. Specifically, for two paths in the same
tail equivalence class, the lesser one is the one with the lesser
outgoing edge at the last vertex they disagree. The Vershik
map on the path space sends each non-maximal path to its
successor. 

In the particular case of a Bratteli-Vershik diagram which
is the Prefix-Suffix Automaton for a single substitution $\sigma$, it
was noted in \cite{CS1} and \cite{HZ2} that there is  a natural total order 
on the outgoing edges from a vertex. If the vertex
is labeled by the letter $b$, each outgoing edge is
labeled $(u,a,v)$ where $\sigma(b) = uav$, and so declare
$(u,a,v) < (u',a',v')$ if $|u| < |u'|$. When the path space
is mapped onto the substitution minimal set via the analog
of \eqref{seqmapdef}, the Vershik map on the path space 
will conjugate (on a large set) 
with the shift map on the substitution minimal set.
 
For the IPSA of the infimax S-adic family considered here,
one may define a partial order on the path space exactly as in the
single substitution case. In this order the path
$\Gamma_\alpha$ is the maximal
element and the paths $\Gamma_\beta$ and $\Gamma_\hbeta$ are
minimal elements. Thus the sets $\spec_i$ just defined are the
tail equivalence classes of the maximal and minimal elements.
We therefore expect from Vershik's construction that each set
would correspond to an orbit of sequences under the shift.
This is the content of Lemma~\ref{specprop}. The corresponding
sequences are built from the following:
\begin{align}\label{alphabeta}
\rseq{\alpha}  &= \lim_{k\raw\infty} \Lambda^{(k)}(\epsilon.3)\nonumber\\ 
\lseq{\beta} &= \lim_{k\raw\infty} \Lambda^{(2k)}(1.\epsilon) =
 \lim_{k\raw\infty} \Lambda^{(2k-1)}(2.\epsilon)\\ 
\lseq{\hbeta}  &= \lim_{k\raw\infty} \Lambda^{(2k)}(2.\epsilon) =
\lim_{k\raw\infty} \Lambda^{(2k+1)}(1.\epsilon)\nonumber
\end{align}
It is clear that the limits exist.
Let $\ut = \lseq{\beta}.\rseq{\alpha}$ and 
$\uht = \lseq{\hbeta}.\rseq{\alpha}$. These are the
S-adic analogs of the period two point in the single
substitution case in the sense that
\begin{align*}
\text{If}\ s_{-1}=1, \ &\text{then}\ \Lambda^{(2k)}(\us)\raw \ut\ 
\text{and} \ \Lambda^{(2k+1)}(\us)\raw \uht\\
\text{If}\ s_{-1}=2\ \text{or}\ 3, \ &\text{then}\
\Lambda^{(2k+1)}(\us)\raw \ut\ 
\text{and} \ 
\Lambda^{(2k)}(\us)\raw \uht
\end{align*}

\begin{lem}\label{specprop} For an $\un\in \what$,
\begin{enumerate}[(a)]
\item if $\Gamma\in\spec_1$ as in \eqref{speconedef}, 
there exists a $j< 0$ with   $\cW(\Gamma) = 
S^j(\lseq{\beta}.\epsilon)$.  In particular, $\cW(\Gamma_\beta)
= S^{-1}(\lseq{\beta}.\epsilon)$.
\item 	 If $\Gamma\in\spec_\hone$ as in \eqref{spechonedef},  
there exists a $j< 0$ with   $\cW(\Gamma) = S^j(\lseq{\hbeta}.\epsilon)$. 
In particular, $\cW(\Gamma_\hbeta)
= S^{-1}(\lseq{\hbeta}.\epsilon)$.
\item 
If $\Gamma\in\spec_2$ as in \eqref{spectwodef}, 
there exists a $j\geq 0$ with $\cW(\Gamma) = 
S^j(\epsilon.\rseq{\alpha})$.  In particular, 
$\cW(\Gamma_\alpha) = \epsilon.\rseq{\alpha}$
\end{enumerate}
\end{lem}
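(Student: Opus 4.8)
I would prove the lemma by first evaluating $\cW$ directly on the three distinguished paths $\Gamma_\alpha,\Gamma_\beta,\Gamma_\hbeta$ and then propagating the answer across each tail class with one observation about tail-equivalent paths. The computational engine is a short list of ``telescoping'' identities forced by $\Lambda_n(3)=31^{n}$, $\Lambda_n(2)=31^{n+1}$, $\Lambda_n(1)=2$. From $\Lambda_{n_k}(3)=31^{n_k}$ one gets $\Lambda^{(k)}(3)=\Lambda^{(k-1)}(3)\,\Lambda^{(k-1)}(1^{n_k})$, hence $\Lambda^{(k)}(3)=3\,1^{n_1}\Lambda^{(1)}(1^{n_2})\cdots\Lambda^{(k-1)}(1^{n_k})$ and $\rseq{\alpha}=3\,1^{n_1}\Lambda^{(1)}(1^{n_2})\Lambda^{(2)}(1^{n_3})\cdots$; one also records $\Lambda^{(j)}(31^{n_{j+1}})=\Lambda^{(j+1)}(3)$. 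In the same way $\Lambda^{(2k)}(1)=\Lambda^{(2k-1)}(2)=\Lambda^{(2k-1)}(3)\,\Lambda^{(2k-2)}(1)$ telescopes to $\lseq{\beta}=\cdots\Lambda^{(5)}(3)\,\Lambda^{(3)}(3)\,\Lambda^{(1)}(3)\cdot 1$, and, using $\Lambda^{(2k+2)}(3)=\Lambda^{(2k)}(31^{n_{2k+1}}2^{n_{2k+2}})$, $\Lambda^{(2k)}(2)=\Lambda^{(2k)}(3)\,\Lambda^{(2k-2)}(2)$ telescopes to $\lseq{\hbeta}=\cdots\Lambda^{(4)}(3)\,\Lambda^{(2)}(3)\cdot 2$.

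The three base computations are then immediate from \eqref{seqmapdef}. For $\Gamma_\alpha$ every $u_i=\epsilon$, $a_i=3$, $v_i=1^{n_{i+1}}$, so $\cW(\Gamma_\alpha)=\epsilon.\,3\,1^{n_1}\Lambda^{(1)}(1^{n_2})\Lambda^{(2)}(1^{n_3})\cdots=\epsilon.\rseq{\alpha}$. For $\Gamma_\beta$ every $v_i=\epsilon$, $a_0=1$, $u_{2j}=31^{n_{2j+1}}$, $u_{2j+1}=\epsilon$, so $\cW(\Gamma_\beta)=\bigl[\cdots\Lambda^{(4)}(31^{n_5})\,\Lambda^{(2)}(31^{n_3})\,31^{n_1}\bigr].1=\bigl[\cdots\Lambda^{(5)}(3)\,\Lambda^{(3)}(3)\,\Lambda^{(1)}(3)\bigr].1=S^{-1}(\lseq{\beta}.\epsilon)$, the middle equality being $\Lambda^{(j)}(31^{n_{j+1}})=\Lambda^{(j+1)}(3)$ applied blockwise. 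Likewise, for $\Gamma_\hbeta$ every $v_i=\epsilon$, $a_0=2$, $u_{2j+1}=31^{n_{2j+2}}$, $u_{2j}=\epsilon$, so $\cW(\Gamma_\hbeta)=\bigl[\cdots\Lambda^{(3)}(31^{n_4})\,\Lambda^{(1)}(31^{n_2})\bigr].2=\bigl[\cdots\Lambda^{(4)}(3)\,\Lambda^{(2)}(3)\bigr].2=S^{-1}(\lseq{\hbeta}.\epsilon)$. This settles the ``in particular'' clauses.

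For an arbitrary $\Gamma$ in one of the classes I would use the following remark, itself a consequence of Lemma~\ref{form}(b): if $\Gamma$ and $\Gamma'$ agree from level $k$ on, then \eqref{seqmapdef} writes $\cW(\Gamma)=C_L\cdot S^{j}(\Lambda^{(k)}(\epsilon.a_k))\cdot C_R$ and $\cW(\Gamma')=C_L\cdot S^{j'}(\Lambda^{(k)}(\epsilon.a_k))\cdot C_R$ with the \emph{same} words $C_L,C_R$ (these involve only $u_i,v_i$ for $i\ge k$) and the \emph{same} block $\Lambda^{(k)}(a_k)$; erasing the point, $\cW(\Gamma)$ and $\cW(\Gamma')$ are the same sequence, so they differ only in the position of the point. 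Now $\spec_1$ is by definition the tail-equivalence class of $\Gamma_\beta$, so any $\Gamma\in\spec_1$ agrees with $\Gamma_\beta$ from some level $k$ on; since $v_i=\epsilon$ for all large $i$, the word $C_R$ is finite, so $\cW(\Gamma)$ is a pointed left-infinite sequence whose underlying sequence is $\lseq{\beta}$ (by the base computation for $\Gamma_\beta$), while \eqref{seqmapdef} shows its portion to the right of the point is $a_0 v_0\Lambda^{(1)}(v_1)\cdots$, which is nonempty because $a_0\neq\epsilon$. Hence $\cW(\Gamma)=S^{j}(\lseq{\beta}.\epsilon)$ with $j<0$. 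The case $\spec_\hone$ is identical with $\hbeta$ in place of $\beta$. For $\spec_2$ the roles of $u_i$ and $v_i$ are exchanged: $C_L$ is empty, $C_R=\Lambda^{(k)}(1^{n_{k+1}})\Lambda^{(k+1)}(1^{n_{k+2}})\cdots$ is infinite, $a_k=3$, and the telescoped form of $\rseq{\alpha}$ gives $\cW(\Gamma)=S^{j}\bigl(\epsilon.\Lambda^{(k)}(3)\,\Lambda^{(k)}(1^{n_{k+1}})\cdots\bigr)=S^{j}(\epsilon.\rseq{\alpha})$ with $0\le j<|\Lambda^{(k)}(3)|$, so $j\ge 0$.

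The step I expect to be the real work is bookkeeping rather than ideas: correctly reconciling the two index conventions in \eqref{speconedef}--\eqref{spectwodef} (edges of a path are numbered from $0$, the indices $n_i$ from $1$) and keeping the decimal point under control through the concatenations in \eqref{seqmapdef}; in particular verifying, in the tail-equivalence remark, that the interior shift $S^{j}$ only moves the point within the block $\Lambda^{(k)}(a_k)$ (so the ``point-erased'' sequences genuinely coincide) and that in the one-sided cases the finite context really accounts for all of the sequence on that side.
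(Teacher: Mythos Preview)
Your proposal is correct and follows essentially the same route as the paper. The paper's proof also hinges on Lemma~\ref{form}(b) (and Remark~\ref{negj}): it observes that for $\Gamma\in\spec_2$ one has $a_k=3$ and $u_k=\epsilon$ for all large $k$, so the shift $j(k)$ in $\cW(\Gamma)=\dots S^{j(k)}(\Lambda^{(k)}(\epsilon.3))\dots$ is eventually constant, and then passes to the limit via the definition $\rseq{\alpha}=\lim_k\Lambda^{(k)}(\epsilon.3)$; analogously for $\spec_1,\spec_\hone$ using $\lseq{\beta}=\lim_k\Lambda^{(2k)}(1.\epsilon)$ and $\lseq{\hbeta}=\lim_k\Lambda^{(2k)}(2.\epsilon)$. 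Your organization differs only cosmetically: you first unwind those limit definitions into explicit telescoped products of $\Lambda^{(j)}(3)$'s and verify the three base paths by direct substitution into \eqref{seqmapdef}, and then your ``tail-equivalence remark'' is exactly the observation that $j(k)$ stabilizes once the tail is fixed. The paper skips the explicit telescoping because the limit formulas \eqref{alphabeta} already encode it; your version has the mild advantage of making the concrete block structure of $\rseq{\alpha},\lseq{\beta},\lseq{\hbeta}$ visible, at the cost of a bit more computation.
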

\begin{proof}
We start with (c). If $\Gamma\in\spec_2$ as in \eqref{spectwodef},
then $a_k = 3$ for all $k>w$. Thus by Lemma~\ref{form}(b), there
is some $j = j(k)$ with  
$\cW(\Gamma) = \dots S^{j(k)}(\Lambda^{(k)}(\epsilon.3)) \dots$. 
On the other hand,
$u_k = \epsilon$ for all $k > w$, and so for those $k$, $j(k)$ is
 a constant $j$, and thus $\cW(\Gamma) = 
\lim_{k\raw\infty} \epsilon\, S^j(\Lambda^{(k)}(\epsilon.3)) \dots = 
S^j(\epsilon.\rseq{\alpha})$.
In particular since for $\Gamma_\alpha$ all $u_k = \epsilon$, 
$\cW(\Gamma_\alpha) = \epsilon.\rseq{\alpha}$.

The arguments for (a) and (b) are similar and we just give (a).
If $\Gamma\in\spec_1$ as in \eqref{speconedef},
then $a_{2m} = 1$ for all $2m>w$. Thus by Remark~\ref{negj}, there
is some $j = j(m)\geq 1$ with  
$\cW(\Gamma) = \dots S^{-j(m)}\Lambda^{(2m)}(1.\epsilon) \dots$. 
On the other hand,
$v_k = \epsilon$ for all $k > w$, and so for those $k$, $j(k)$ is
 a constant $j$, and thus $\cW(\Gamma) = 
\lim_{m\raw\infty}\dots S^{-j}(\Lambda^{(2m)}(1.\epsilon))\, \epsilon
 = S^{-j}(\lseq{\beta}.\epsilon)$.
In particular since for $\Gamma_\beta$ all $v_k =\epsilon$ and
$a_0 = 1$, so 
$\cW(\Gamma_\beta) = S^{-1}(\lseq{\beta}.\epsilon)$.
\end{proof}

With Lemma~\ref{specprop} in  mind, now define
\begin{align}
\sspec_1 &= \{S^j(\lseq{\beta}.\epsilon)\colon j<0 \}\nonumber\\
\sspec_\hone &= \{S^j(\lseq{\hbeta}.\epsilon)\colon j<0 \}\\
\sspec_2 &= \{S^j(\epsilon.\rseq{\alpha})\colon j\geq 0 \}\nonumber
\end{align}

\subsection{Injectivity of the word and sequence maps}
The  maps $\cW$ takes the prefix-suffix labels along
a path and generates a word or sequence via the Dumont-Thomas expansion.
The next theorem states that this assignment is injective. 
\begin{thm}\label{assignment}
Given $\un\in\what$.
\begin{enumerate}[(a)]
\item For each $a \in\{1,2,3\}$,
 the word map $\gamma\mapsto \cW(\gamma)$ defined in \eqref{wordmapdef} 
gives a bijection between $\cP^{(k)}(a)$ and
\begin{equation*}
\cS^{(k)}(a) :=\{S^j (\Lambda^{(k)}(a)) \colon 0 \leq j < |\Lambda^{(k)}(a)|\}.
\end{equation*}
\item The sequence map $\Gamma\mapsto \cW(\Gamma)$  
defined in \eqref{seqmapdef}
 is an injection on $\cN$  and
 a bijection between $\spec_i$ and $\sspec_i$ for $i = 1, \hone, 2$.
\end{enumerate}
\end{thm}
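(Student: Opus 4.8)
The plan is to prove part (a) by induction on $k$, exactly parallelling the inductive structure of Lemma~\ref{form}(a), and then to bootstrap part (b) from part (a) via a limiting argument. For (a), the base case $k=0$ is immediate from the definition of the IPSA: the length-$1$ paths terminating at $a$ are in bijection with the factorizations $\Lambda_{n_0}(a)=uav$ — wait, more precisely with the pointed words $S^j(\Lambda^{(0)}(a))=S^j(a)$, which is trivial since $\Lambda^{(0)}=\mathrm{id}$ and the only option is $j=0$. For the inductive step, I would use the same device as in Lemma~\ref{form}: given $\un$, pass to $\un'=n_2 n_3\dots$ with its own word map $\cW'$, and relate a length-$(k+1)$ path $\gamma=\Gamma_0,\dots,\Gamma_k$ for $\un$ to the length-$k$ path $\gamma'=\Gamma_1,\dots,\Gamma_k$ for $\un'$. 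The identity from the proof of Lemma~\ref{form}(a), namely $\cW(\gamma)=S^{|u_0|}\big(\Lambda_{n_1}(\cW'(\gamma'))\big)$, shows that $\gamma\mapsto(\Gamma_0,\gamma')$ together with the inductive bijection for $\un'$ determines $\cW(\gamma)$; what remains is to check this composite is itself a bijection onto $\cS^{(k+1)}(a)$.

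The key combinatorial point — and I expect this to be the main obstacle — is the following counting/uniqueness fact: if $\Lambda_{n_1}(a_0)=u_0 a_0 v_0$ is a factorization recorded by the edge $\Gamma_0$, and $w'\in\cS^{(k)}(a_0)$ is a shift of $\Lambda^{(k-1)}_{n_2\dots}(a_0)$, then the shift $S^{|u_0|}(\Lambda_{n_1}(w'))$ lands in $\cS^{(k+1)}(a)$, and conversely every element of $\cS^{(k+1)}(a)$ arises exactly once this way. Injectivity here amounts to: distinct $(\Gamma_0, w')$ give distinct pointed words. If two pairs $(\Gamma_0,w')$ and $(\tilde\Gamma_0,\tilde w')$ produce the same word, then applying $\Lambda_{n_1}$ and comparing lengths of the image forces $|u_0|$ and the position of the marked letter $a_0$ inside $\Lambda_{n_1}(a_0)$ to agree, hence $\Gamma_0=\tilde\Gamma_0$ (the edge labels of the IPSA are exactly the distinct markings of $\Lambda_{n_1}$), and then $w'=\tilde w'$ by the inductive hypothesis. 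Surjectivity is the pigeonhole count $\sum_{b}\#\{\text{edges }b\to a\}\cdot|\Lambda^{(k-1)}(b)| = |\Lambda^{(k)}(a)|$, which is just the statement that the Abelianization bookkeeping $A_{n_1}$ adds up — i.e. $\Lambda^{(k)}(a)$ is the concatenation, over letters $b$ appearing in $\Lambda_{n_1}(a)$ (with multiplicity and order), of the blocks $\Lambda^{(k-1)}_{n_2\dots}(b)$, and each shift of $\Lambda^{(k)}(a)$ picks out a unique such block together with a unique internal shift. One should be a little careful that $a_0\neq\epsilon$ is automatic (as noted at the end of the proof of Lemma~\ref{form}), so all marked positions are legitimate and the index ranges match.

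For part (b), I would argue as follows. On $\cN$, a path $\Gamma$ has both $u_k\neq\epsilon$ and $v_k\neq\epsilon$ for arbitrarily large $k$; given two paths $\Gamma\neq\Gamma'$ in $\cN$, pick $k$ large enough that $\Gamma^{(k-1)}\neq\Gamma'^{(k-1)}$ and (using Lemma~\ref{form}(b)) write $\cW(\Gamma)=\dots\Lambda^{(k)}(u_k)\,S^{j}(\Lambda^{(k)}(\epsilon.a_k))\,\Lambda^{(k)}(v_k)\dots$ with the inner block equal to $\cW(\Gamma^{(k-1)})$; since the outer blocks $\Lambda^{(k)}(u_k)$ and $\Lambda^{(k)}(v_k)$ are nonempty and the substitutions are length-expanding, for $k$ large the marked letter sits deep inside the word, so the pointed word $\cW(\Gamma)$ restricted to a large window around the decimal point equals $\cW(\Gamma^{(k-1)})$ pointed the same way; by part (a) the latter determines $\Gamma^{(k-1)}$, hence $\Gamma\neq\Gamma'$ forces $\cW(\Gamma)\neq\cW(\Gamma')$. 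The bijection $\spec_i\leftrightarrow\sspec_i$ is then essentially Lemma~\ref{specprop} plus an injectivity check within each tail-equivalence class: by Lemma~\ref{specprop} the map is well-defined and surjective onto $\sspec_i$, and injectivity follows because within, say, $\spec_2$ a path is $\Gamma_0,\dots,\Gamma_w,\prod_{i>w}(\epsilon,3,1^{n_i})$ with $\Gamma_w\neq(\epsilon,3,1^{n_{w+1}})$, and the finite head $\Gamma_0,\dots,\Gamma_w$ is a length-$(w+1)$ path whose word map image (which is a visible truncation of $\cW(\Gamma)$, since all subsequent $u_k=\epsilon$) determines it by part (a); the value $j$ in $\cW(\Gamma)=S^j(\epsilon.\rseq\alpha)$ records $|u_w|+\dots$ and matches up with the head. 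The analogous statements for $\spec_1,\spec_\hone$ use Remark~\ref{negj} in place of Lemma~\ref{form}(b) in the obvious way.
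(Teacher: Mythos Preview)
Your argument for part (a) is fine and in fact somewhat cleaner than the paper's: you prove injectivity directly by the block-decomposition $\Lambda^{(k+1)}(a)=\Lambda_{n_1}(b_1)\cdots\Lambda_{n_1}(b_m)$, observing that a shift index $j$ lands in a unique block and at a unique position within it, while the paper instead appeals to the casework lemma (Lemma~\ref{stronginjective}) for injectivity. Both of you use the Abelianization count $\#\cP^{(k)}(a)=\|A^{(k)}\ve_a\|_1=|\Lambda^{(k)}(a)|$ for surjectivity.

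The gap is in part (b), in your injectivity argument on $\cN$. You write that ``the pointed word $\cW(\Gamma)$ restricted to a large window around the decimal point equals $\cW(\Gamma^{(k-1)})$\dots; by part (a) the latter determines $\Gamma^{(k-1)}$.'' The trouble is that part (a) lets you recover $\Gamma^{(k-1)}$ from the \emph{pointed finite word} $\cW(\Gamma^{(k-1)})$, meaning you already know the terminal state $a_k$ and the positions of the left and right ends of that window. But from the bi-infinite sequence $\cW(\Gamma)$ alone you do not a priori know where the level-$k$ block begins and ends; if $\Gamma$ and $\Gamma'$ have different $a_k,a_k'$ or different $j,j'$, their windows $\cW(\Gamma^{(k-1)})$ and $\cW(\Gamma'^{(k-1)})$ sit at different positions inside the common bi-infinite sequence, and nothing you have proved rules out $\cW(\Gamma)=\cW(\Gamma')$. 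What you are assuming is exactly \emph{recognizability} of the level-$k$ block structure, and that is the substantive content here. The paper supplies it through Lemma~\ref{stronginjective}, a careful case analysis showing directly that $\Gamma\neq\Gamma'$ forces $\cW(\Gamma)\sne\cW(\Gamma')$ at some explicit index. (There is an easy salvage available for this family: since every code word $\Lambda_n(a)\in\{2,\,31^{n+1},\,31^n\}$ begins with $2$ or $3$ and contains no other $2$ or $3$, the $\Lambda_{n_1}$-block boundaries in any image sequence are precisely the positions preceding a $2$ or $3$; this gives recognizability and lets your inductive scheme go through. But that argument needs to be made.)

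Two smaller points on the $\spec_i$ cases: Lemma~\ref{specprop} only gives $\cW(\spec_i)\subset\sspec_i$, not surjectivity; the paper (and implicitly your own remarks about ``the value $j$ \dots matches up with the head'') gets surjectivity by using part (a) to build a finite head mapping to $S^j(\Lambda^{(k)}(\epsilon.3))$ and then appending the tail $\prod(\epsilon,3,1^{n_i})$. Your injectivity sketch for $\spec_2$ is essentially right once you note that for $k$ beyond both cutoffs $w,w'$ the heads $\Gamma^{(k-1)},\Gamma'^{(k-1)}$ lie in the \emph{same} $\cP^{(k)}(3)$ with the same shift $j$, so part (a) applies cleanly there.
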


\begin{rem}
This theorem coupled with later results may be be viewed as a desubstitution
or recognizability result. As
in the proof of Lemma~\ref{form}, given $\un\in\what$ and a path
$\Gamma= \Gamma_0, \Gamma_1, \dots$ in $\cP_\un$,  let 
 $\un' = n_2 n_3\dots$ and form the   path 
$\Gamma' =  \Gamma_1, \Gamma_2, \dots \in \cP_{\un'}$.
If $\cW'$ is the sequence map associated with the path space
 $\cP_{\un'}$, then directly from the definitions of the sequence maps
\begin{equation*}
S^{|u_0|} \Lambda_{n_1}(\cW'(\Gamma')) = \cW(\Gamma).
\end{equation*}
Thus if $\cW(\Gamma)$ is a bi-infinite sequence (i.e. $\Gamma\in\cN$),
it is desubstituted under $\Lambda_{n_1}$ by $\cW'(\Gamma')$. By 
Theorem~\ref{assignment}, the assignment $\Gamma\raw\cW(\Gamma)$ is injective,
so the desubstitution is unique. To obtain a full result, we need
in addition that $\cW(\Gamma)\in\Delta_\un$ (Theorem~\ref{mainmap})
and for $\Gamma\in\cS_i$, we must extend $\cW(\Gamma)$ to
a bi-infinite sequence in $\Dun$ as described above  Theorem~\ref{mainmap}.
Note that a sequence in $\Dun$ is  desubstituted under $\Lambda_{n_1}$
by a sequence in $\Delta_{\un'}$. One may continue, desubstituting
under $\Lambda_{n_2}$ a sequence in $\Delta_{\un'}$ by one in
$\Delta_{\un''}$, where $\un'' = n_3 n_4 \dots$, etc. Fisher studies
this situation in \cite{fisher} and constructs the S-adic analog
of the subshift of finite type corresponding to a single
substitution (see Section 7.2 in \cite{S1}). 
\end{rem}

The proof of Theorem~\ref{assignment} 
requires more detailed  information on the word
and sequence maps $\cW$.
Since we are considering words and sequences of different types
we need a stronger notion of inequality.
Given words or one-sided or two-sided sequences $\us$ and $\us'$, they 
are said to be \textit{strongly unequal}, $\us\sne\us'$,
 at index $j$ if  $s_j \not=\epsilon$,
$s'_j \not=\epsilon$, and $s_j \not=  s'_j$.
Thus $1.233$ is not equal to $1.23$ but it is not strongly
unequal.
\begin{rem}\label{pairs}
 The following is easy to check from the IPSA, but very useful.
If $\cW(\Gamma)$ is bi-infinite, only the following pairs can
occur in the sequence $\cW(\Gamma)$:
 $11$, $12$, $13$, $22$, $23$ and $31$. The same restrictions hold
inside any $\cW(\Gamma)$ or  $\cW(\gamma)$, and in addition,
a right-one sided sequence or finite word can only start with a
$3$ while  a left-one sided sequence or finite word can only end with
a $1$ or a $2$.
\end{rem}
 
\begin{lem}\label{stronginjective}  For all $\un\in \what$:
\begin{enumerate}[(a)]
\item If $\Gamma\not=\barGamma$ in $\cP_{\un}$ or
$\gamma\not=\bargamma$ in $\cP^{(k)}$, then 
 $\cW(\Gamma) \sne \cW(\barGamma)$ and  $\cW(\gamma) \sne \cW(\bargamma)$.
\item If  $\Gamma\not=\barGamma$ in $\cP_{\un}$ 
then $\cW(\Gamma) \sne \cW(\barGamma)$ at an index $j\geq 0$
 except for the case of pairs of the form
\begin{align}\label{badpair1}
\Gamma &= \Gamma_0, \ldots \Gamma_{w-2}, (31^{n_{w -\ell}}, 1, 1^\ell), 
\Gamma_w, \ldots\\
\barGamma &= \Gamma_0, \ldots \Gamma_{w-2}, (31^{n_{w -\bell}}, 1, 1^\bell), 
\Gamma_w, \prod_{i=1}^\infty 
\left((\epsilon, 2, \epsilon), (31^{n_{w + 2i}}, 1, \epsilon)\right)\nonumber
\end{align}
with $\ell > \bell$ or
\begin{align}\label{badpair2}
\Gamma &= \Gamma_0, \ldots \Gamma_{w-2}, (31^{n_{w -\ell}}, 1, 1^\ell), 
\Gamma_w, \ldots\\
\barGamma &= \Gamma_0, \ldots \Gamma_{w-2}, (31^{n_{w -\bell-1}}, 1, 1^\bell), 
\Gamma_w, \prod_{i=1}^\infty 
\left((\epsilon, 2, \epsilon), (31^{n_{w + 2i}}, 1, \epsilon)\right)\nonumber
\end{align}
with $\ell\geq\bell$.
  
\end{enumerate}
\end{lem}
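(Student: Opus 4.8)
The plan is to prove both parts of Lemma~\ref{stronginjective} simultaneously by induction on the level at which the two paths first differ, using the recursive structure of the sequence map noted in the remark following Theorem~\ref{assignment}, namely $\cW(\Gamma) = S^{|u_0|}\Lambda_{n_1}(\cW'(\Gamma'))$. First I would reduce to the case where the paths differ already at level $0$: if $\Gamma$ and $\barGamma$ first differ at level $m$, then the tail paths starting at level $m$ live in the path space of the shifted index list $\un^{(m)} = n_{m+1} n_{m+2}\dots$, and the initial common segment $\Gamma_0,\dots,\Gamma_{m-1}$ applies the \emph{same} map $S^{j}\Lambda^{(m)}(\cdot)$ (for a fixed shift $j$ determined by the common prefixes $u_0,\dots,u_{m-1}$) to both images. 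Since each $\Lambda_n$ is injective on sequences and applying a common shift preserves strong inequality at the correspondingly shifted index, a strong inequality at index $j$ for the tail images pushes forward to a strong inequality for $\cW(\Gamma)$ and $\cW(\barGamma)$. So the content is entirely in the base case: two paths (or finite paths) whose zeroth edges $\Gamma_0=(u_0,a_0,v_0)$ and $\barGamma_0=(\bar u_0,\bar a_0,\bar v_0)$ already differ.

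For the base case of part (a), I would look at the decimal point. Both $\cW(\Gamma)$ and $\cW(\barGamma)$ have the form $(\text{prefix part}).a_0 v_0\Lambda^{(1)}(v_1)\dots$ and $(\text{prefix part}).\bar a_0 \bar v_0 \Lambda^{(1)}(\bar v_1)\dots$. If $a_0 \neq \bar a_0$ we get a strong inequality at index $0$ immediately (both are nonempty letters since $a_0,\bar a_0 \in\{1,2,3\}$). If $a_0 = \bar a_0 =: a$, then because $\Lambda_{n_1}(a)$ is a fixed word $u_0 a_0 v_0 = \bar u_0 a_0 \bar v_0$, having $\Gamma_0 \neq \barGamma_0$ forces $|u_0|\neq|\bar u_0|$, hence $v_0 \neq \bar v_0$ with, say, $|v_0| > |\bar v_0|$. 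Now $\Lambda^{(1)}(\bar v_1)\dots$ continues $\barGamma$'s image past the end of $\bar v_0$, and I need to compare the surplus symbols of $v_0$ against the first symbols of $\Lambda_{n_1}(\bar v_1 \bar v_2 \dots)$. This is where Remark~\ref{pairs} does the real work: the allowed adjacent pairs and the constraint that what follows is the start of some $\Lambda_{n_1}(b)$ (a word beginning with $2$, $3$, or $31^\bullet$) force a forbidden or mismatched pair unless one is heading into the exceptional tail configurations. For part (b), the same analysis is carried out more carefully, and one discovers that the \emph{only} way a strong inequality at a nonnegative index can fail is when the surplus portion of $v_0$ (or, symmetrically by Remark~\ref{negj}, of $u_0$) is a run of $1$'s of the form $31^{n_{w-\ell}}\dots$ that exactly matches, letter by letter and with all intervening edges forced to be $(\epsilon,2,\epsilon)$ and $(31^{n},1,\epsilon)$, the expansion of a $\spec_1$-type tail — giving precisely the two families \eqref{badpair1} and \eqref{badpair2}, the discrepancy between them being whether the run lengths $n_{w-\ell}$ versus $n_{w-\bell}$ or $n_{w-\bell-1}$ absorb the difference.

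I expect the main obstacle to be the bookkeeping in the base case of (b): correctly enumerating which misalignments of the suffix words $v_k$ (and by the $S^{-j'}$ form of Remark~\ref{negj}, the prefix words $u_k$) can conspire so that one path's ``overhang'' of $1$'s is consumed exactly by the other path's continuation without ever producing a strongly unequal symbol at a nonnegative index. One must track the decimal point through the shifts $S^j$, verify that the exceptional pairs genuinely force $a_k = 1$ and $v_k = \epsilon$ (resp. the mirror condition) for all large $k$ — i.e. that $\barGamma$ is forced into $\spec_1$ — and check that the two sub-cases \eqref{badpair1}, \eqref{badpair2} with the stated inequalities on $\ell,\bell$ are exhaustive. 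The positivity requirement on the index ($j\geq 0$) is what makes (b) strictly stronger than (a): in the exceptional cases the paths' images still differ, but only at strictly negative indices, which is exactly the behavior one expects near the minimal elements $\Gamma_\beta,\Gamma_\hbeta$ of the Vershik order, so the statement is sharp. Throughout, the induction is clean; it is only this finite combinatorial case-check, driven entirely by the pair restrictions of Remark~\ref{pairs} and the shape of the words $\Lambda_n(a)$ in \eqref{subdef}, that requires care.
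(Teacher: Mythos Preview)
Your outline is the paper's approach---induction on the first level of disagreement, with the inductive step passing to the shifted IPSA via $\cW(\Gamma)=S^{|u_0|}\Lambda_{n_1}(\cW'(\Gamma'))$---but the base case is misstated. You write that when $a_0=\bara_0=:a$ the word ``$\Lambda_{n_1}(a)$'' is fixed and hence $u_0 a_0 v_0=\baru_0 a_0 \barv_0$. But the label $(u_0,a_0,v_0)$ records $u_0 a_0 v_0=\Lambda_{n_1}(a_1)$ with $a_1$ the \emph{terminal} state of the edge, not $a_0$; when $a_0=\bara_0=1$ (or $3$) the edges $\Gamma_0,\barGamma_0$ can land at different states $2$ and $3$, giving the distinct words $31^{n_1+1}$ and $31^{n_1}$. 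The paper therefore splits further on the terminal states, and it is precisely the ``both land at $2$'' and the ``land at different states'' sub-cases of $a_0=\bara_0=1$ that generate the exceptional families \eqref{badpair1} and \eqref{badpair2}. Your parenthetical about $u_0$ via Remark~\ref{negj} is also off: the exceptions in (b) live entirely on the suffix side (they force $\barGamma$ into $\spec_1\cup\spec_\hone$); the prefix side is invoked only afterward to deduce (a) from (b), by noting that in those exceptional pairs $u_{w-1}\neq\baru_{w-1}$ gives a strong inequality at a negative index.

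A second gap is in the inductive step: injectivity of $\Lambda_{n_1}$ alone does not push a strong inequality at index $j'\geq 0$ forward to one at index $\geq |u_0|$, because $\Lambda_{n_1}(3)=31^{n_1}$ is a proper prefix of $\Lambda_{n_1}(2)=31^{n_1+1}$. If the tail images first differ at letters $2$ versus $3$ one must inspect the next letter; the paper uses Remark~\ref{pairs} here as well (a $3$ is necessarily followed by a $1$, and then $\Lambda_{n_1}(31)=31^{n_1}2$ versus $\Lambda_{n_1}(2)=31^{n_1+1}$ disagree at a definite position past $|u_0|$).
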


\begin{proof}
We prove (b) first.  The proof is by induction on $k$, 
the smallest index with $\Gamma_k \not=
\barGamma_k$. To start assume that $\Gamma_0 \not= \barGamma_0$. 
Now if $a_0 \not=\bara_0$, then $\cW(\Gamma) \sne \cW(\barGamma)$ at
the index $j=0$ and so we may assume $a_0 =\bara_0$. Since just
one edge emerges from state $2$, the case $a_0 = \bara_0 = 2$
is impossible, so we are left with two remaining cases.

Now if  $a_0 =\bara_0 = 3$ and $\Gamma_0 \not= \barGamma_0$,
  the only possibilities are, say,
$\Gamma_0 = (\epsilon, 3, 1^{n_1 + 1})$ which can only be followed by 
$\Gamma_1 = (\epsilon, 2, \epsilon)$, and 
$\barGamma_0 = (\epsilon, 3, 1^{n_1})$ which can only be followed by 
$\bar\Gamma_1 = (\epsilon,3, 1^m)$ with $m = n_2$ or $m= n_2 + 1$.
In either case, $\cW(\Gamma_0\Gamma_1)_R = .3 1^{n_1+1}$ and
$\cW(\barGamma_0\barGamma_1)_R = .3 1^{n_1}\Lambda_{n_1}(1^m) =
\epsilon.3 1^{n_1}2^m$, and so $\cW(\Gamma_0\Gamma_1) \sne 
\cW(\barGamma_0\barGamma_1)$ at index $j=n_1+1>0$.

Now assume  $a_0 =\bara_0 = 1$ and we consider various subcases. 
First assume that $\Gamma_0$ and $\barGamma_0$ both terminate at state
$3$. Thus $\Gamma_0 = (31^{n_1 -\ell-1}, 1, 1^\ell)$ and
$\barGamma_0 = (31^{n_1 -\bell-1}, 1, 1^\bell)$ where, say, 
$\ell > \bell$. Thus 
$\cW(\Gamma_0, \Gamma_1)_R = .1^{\ell+1}\Lambda^{(1)}(1^m)$ and
$\cW(\barGamma_0, \barGamma_1)_R = .1^{\bell+1}\Lambda^{(1)}(1^\barm)$ 
with $m, \barm = n_2$ or $n_2 + 1$. Since $\Lambda^{(1)}(1) = 2$,
$\cW(\Gamma)\sne\cW(\barGamma)$ at index $j=\bell + 1>0$.

Now assume that $\Gamma_0$ and $\barGamma_0$ both terminate at state
$2$. Thus $\Gamma_0 = (31^{n_1 -\ell}, 1, 1^\ell)$ and
$\barGamma_0 = (31^{n_1 -\bell}, 1, 1^\bell)$ where, say, 
$\ell > \bell$. If $\barv_k = \epsilon$ for all $k\geq 1$ 
then we are in the excluded case \eqref{badpair1}. Thus
for some $k\geq 1$, $\barv_k = 1^\barm$ with $\barm>0$. Thus
$\cW(\barGamma_0, \barGamma_1)_R = .1^{\bell+1}\Lambda^{(k)}(1^\barm)$
and since  $\Lambda^{(k)}(1^\barm)$ begins with either
a $2$ or a $3$, $\cW(\Gamma)\sne\cW(\barGamma)$ at index $j=\bell + 1>0$.
 
For the last subcase, assume that $\Gamma_0$ terminates at
$3$ and  $\barGamma_0$ terminates at $2$. Thus
 $\Gamma_0 = (31^{n_1 -\ell-1}, 1, 1^\ell)$ and 
$\barGamma_0 = (31^{n_1 -\bell}, 1, 1^\bell)$. Of necessity
$v_1 = 1^m$ with $m =  n_2$ or $n_2 + 1$, thus if
$\ell < \bell$, then $\cW(\Gamma)\sne\cW(\barGamma)$ at index 
$j=\bell + 1 > 0$. Now assume that $\ell \geq \bell$.
If $\barv_k = \epsilon$ for all $k\geq 1$ 
then we are in the excluded case \eqref{badpair2}. Thus
for some $\bark\geq 1$, $\barv_\bark = 1^\barm$ with $\barm>0$. In fact,
$\bark\geq 2$ because of necessity, $\barGamma_1 = (\epsilon, 2, \epsilon)$.
Since $v_1   = 1^m$ with $m =  n_2$ or $n_2 + 1$,
$\cW(\Gamma)_R = .1^{\ell + 1} \Lambda^{(1)}(1^m) = 
.1^{\ell + 1} 2\ldots$ and 
$\cW(\barGamma)_R = .1^{\bell + 1} \Lambda^{(\bark)}(1^\barm) = 
.1^{\bell + 1} 3\ldots$ using the fact that $\bark\geq 2$ and
so $\cW(\Gamma)\sne\cW(\barGamma)$ at index $j=\bell + 1$.  
 This finishes the initial inductive step of $k=0$.

Now assume the result is true for $k>0$. Let $\un' = n_2 n_3 \dots$ and
consider the infinite paths in the associated  IPSA, $\cP_{\un'}$, obtained
by deleting the first edges of $\Gamma$ and $\barGamma$ and so
$\Gamma' =  \Gamma_1, \Gamma_2, \dots$ and
$\barGamma' =  \barGamma_1, \barGamma_2, \ldots$. If $\Gamma$ and
$\barGamma$ are not of the form \eqref{badpair1} or \eqref{badpair2},
then neither are  $\Gamma'$ and $\barGamma'$. 
 Since $\barGamma'$ and $\Gamma'$ have $\barGamma'_k \not = \Gamma'_k$,
if $\cW'$ is the word map for $\cP_{\un'}$, then by  the inductive hypothesis  
$\cW'(\Gamma') \sne \cW'(\barGamma')$ at some index $j\geq 0 $ and
let $j'$ be the least such $j$. Now note 
as in the proof of Lemma~\ref{form},
$S^{|u_0|}\Lambda_{n_1} \cW'(\Gamma') =  \cW(\Gamma)$ and
$S^{|u_0|}\Lambda_{n_1} \cW'(\barGamma') =  \cW(\barGamma)$.
We claim that this implies that $\cW(\Gamma) \sne \cW(\barGamma)$
at an index $J\geq 0$. We first show the result that 
$\Lambda_{n_1} \cW'(\Gamma') \sne \Lambda_{n_1} \cW'(\barGamma')$
at an index $j \geq 0$ and then argue that, in fact, $j \geq |u_0|$

Now if it was the case that $\cW'(\Gamma') \sne \cW'(\barGamma')$
at an index $j'\geq 0$ 
with either $\cW'(\Gamma')_{j'} =1$ or $ \cW'(\barGamma')_{j'}=1$,
then the result is immediate since $\Lambda_{n_1}(1) = 2$ and both
$\Lambda_{n_1}(2)$ and $\Lambda_{n_1}(3)$  begin with a $3$.
So the only case remaining is, say,
 $\cW'(\Gamma')_{j'} =2$ and $ \cW'(\barGamma')_{j'}=3$.
Using Remark~\ref{pairs}, the $3$ must be followed
by a $1$. 
Since $j' \geq 0$  
and $\Lambda_{n_1} (2) = 3 1^{n_1+1}$
while $\Lambda_{n_1}(31)  = 3 1^{n_1} 2 $, we have the result
in this case also.
 
Next we show that $j\geq |u_0|$. When $k>1$, 
\begin{align*}
\Lambda_{n_1} \cW'(\Gamma')_R &= \Lambda_{n_1}(a_1) \Lambda^{(1)}(v_1)
\dots  \Lambda^{(k-1)}(v_{k-1}) \Lambda^{(k)}(v_{k})\dots\\
\Lambda_{n_1} \cW'(\barGamma')_R &= \Lambda_{n_1}(a_1) \Lambda^{(1)}(v_1)
\dots  \Lambda^{(k-1)}(v_{k-1}) \Lambda^{(k)}(\barv_{k})\dots
\end{align*}
and so whatever $j$ is it satisfies 
$$j \geq |\Lambda_{n_1}(a_1)| 
+ \sum_{i=1}^{k-1} |\Lambda^{(i)}(v_i)| = 
|u_0 a_0 v_0| 
+ \sum_{i=1}^{k-1} |\Lambda^{(i)}(v_i)|.
$$ 
When $k=1$, since $\Gamma_0 = \barGamma_0$, both 
$\Gamma_1$ and $\barGamma_1$, emerge from the same state
$a_1 = \bara_1$ and so 
\begin{align*}
\Lambda_{n_1} \cW'(\Gamma')_R &= \Lambda_{n_1}(a_1) \Lambda^{(1)}(v_1)
\dots  \\
\Lambda_{n_1} \cW'(\barGamma')_R &= \Lambda_{n_1}(a_1) \Lambda^{(1)}(\barv_1)
\dots  
\end{align*}
so $j\geq |\Lambda_{n_1}(a_1)| = |u_0 a_0 v_0|$.
Thus in every case, $j>|u_0|$ and so 
$\cW(\Gamma) \sne \cW(\barGamma)$
at an index $J= j-|u_0|\geq 0$

For (a),  note that for $\Gamma$ and $\barGamma$ as in
\eqref{badpair1} or \eqref{badpair2} have $u_i = \baru_i$ 
for $i = 1, \dots, w-2$ and $u_{w-1} \not= \baru_{w-1}$, 
thus $\cW(\Gamma) \sne \cW(\barGamma)$
at an index $j < 0$. 
\end{proof}

\noindent\textit{Proof of Theorem~\ref{assignment}:} 
For (a) first note the assignment 
is injective by Lemma~\ref{stronginjective}(a) and 
the inclusion $\cW(\cP^{(k)}(a)) \subset
\cS^{(k)}(a)$ follows from Lemma~\ref{form}.
To finish part (a),  by the definition of the IPSA, the Abelianizations and
standard graph theory, 
\begin{equation*}
\# (\cP^{(k)}(a)) = \| A_{n_1 \dots n_{k}}(\ve_a) \|_1 =
 |\Lambda^{(k)}(a)|,
\end{equation*}
and so the assignment is onto.

To see that $\cW:\spec_2\raw \sspec_2$ is onto, pick $S^j(.\rseq{\alpha})$
for some $j\geq 0$. Using part (a), for any $k$ with 
$0\leq j < |\Lambda^{(k)}(3)|$ there is a path $\gamma = \Gamma_0, \dots, \Gamma_k$
with $a_k = 3$ and $\cW(\gamma) = S^j \Lambda^{(k)}(3)$. Thus if we define the
infinite path $\Gamma =   \Gamma_0, \dots, \Gamma_k, 
\prod_{i={k+1}}^\infty (\epsilon, 3, 1^{n_i})$, then as in the proof of
Lemma~\ref{specprop},  
$\cW(\Gamma) = S^j(.\rseq{\alpha})$.  The proof that
$\cW:\spec_1\raw \sspec_1$ and $\cW:\spec_\hone\raw \sspec_\hone$
are onto are similar.
Since $\cW(\cN), \sspec_1, \sspec_\hone$ and  $\sspec_2$ are 
all mutually disjoint, the injectivity of all the assignments follow
from Lemma~\ref{stronginjective}.
$\Box$

\section{A topological lemma}
Recall that our goal is to construct a geometric representation
of the S-adic symbolic minimal sets defined in the introduction.
The word map $\cW$ takes us from paths to sequences and so we need a map in the
other direction. 
 This requires an elementary, but technical
topology lemma. For a subset $Z_1$ in a topological space $Z_2$,
its set of limit points is denoted $\LP(Z_1)$
\begin{lem}\label{toplem}
Assume $X$ and $Y$ are compact metric spaces with $X = X_1\sqcup X_2$
and there exist injections $f_1:X_1\raw Y$ and $f_2, f_3:X_2\raw Y$
so that when $Y_i = \image(f_i)$ for $i=1,2,3$  the $Y_i$ are disjoint
with $Y_1$ dense in $Y$. Define a set-valued function $\barf$ by
\begin{align*}
\barf(x) &= \{f_1(x)\} \ \text{when}\ x\in X_1\\
     &=  \{f_2(x), f_3(x) \} \ \text{when}\ x\in X_2.
\end{align*}
and assume that $\barf$ has the property that if $x_n\raw x\in X$,
then $\LP\{\barf(x_n)\} \subset \barf(x)$.
Then $\barf$ is onto in the sense that 
$Y = \cup_{x\in X} \barf(x)$ and if $g:Y\raw X$ is defined by
$g(y) = f_i\Inv(y)$ when $y\in Y_i$, then $g$ is continuous,
injective on $Y_1$ and two-to-one on $Y_2\cup Y_3$.
\end{lem}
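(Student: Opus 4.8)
The plan is to establish the three claims in sequence: surjectivity of $\barf$, continuity of $g$, and the injectivity/two-to-one count. I would start with surjectivity. Pick $y\in Y$. Since $Y_1$ is dense in $Y$ and $Y_1=\image(f_1)$, there is a sequence $y_n=f_1(x_n)$ with $x_n\in X_1$ and $y_n\raw y$. By compactness of $X$, pass to a subsequence so that $x_n\raw x\in X$. Then $y\in\LP\{\barf(x_n)\}$ (here using $y_n\in\barf(x_n)$ and $y_n\raw y$, noting that if infinitely many $y_n$ equal $y$ the conclusion is even easier since each $y_n\in Y_1$ and $Y_1$ is a single-valued image), so by hypothesis $y\in\barf(x)$. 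Hence $Y=\bigcup_{x\in X}\barf(x)$; combined with $Y_2,Y_3\subset\bigcup\barf(x)$ trivially, and the $Y_i$ being the images of the $f_i$, we get $Y=Y_1\cup Y_2\cup Y_3$ as well.

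Next I would verify that $g$ is well defined and continuous. It is well defined because the $Y_i$ are pairwise disjoint and their union is $Y$, so each $y\in Y$ lies in exactly one $Y_i$ and $f_i$ is injective there, giving a unique preimage. For continuity, since $Y$ is compact metric it suffices to check sequential continuity: let $y_n\raw y$ in $Y$ and set $x_n=g(y_n)$, $x=g(y)$. Suppose $x_n\not\raw x$; then by compactness of $X$ some subsequence $x_{n_j}\raw x'\neq x$. Along a further subsequence we may assume all $y_{n_j}$ lie in a single $Y_i$, so $y_{n_j}=f_i(x_{n_j})$ with $x_{n_j}\in X_1$ (if $i=1$) or $x_{n_j}\in X_2$ (if $i=2,3$), and $y_{n_j}\in\barf(x_{n_j})$. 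The hypothesis applied to $x_{n_j}\raw x'$ gives $y=\lim y_{n_j}\in\LP\{\barf(x_{n_j})\}\subset\barf(x')$. But $y\in Y_i$ and $g(y)=f_i\Inv(y)=x$, while $y\in\barf(x')$ forces $x'\in X_1$ with $f_1(x')=y$ (if $i=1$) or $x'\in X_2$ with $y\in\{f_2(x'),f_3(x')\}$ (if $i\in\{2,3\}$); in either case injectivity of $f_i$ gives $x'=x$, a contradiction. Hence $x_n\raw x$ and $g$ is continuous.

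Finally, the cardinality statements. If $y\in Y_1$, then $g(y)=f_1\Inv(y)\in X_1$, and $y$ belongs to no other $Y_j$, so $y$ has a unique $g$-preimage label; more precisely, $g$ restricted to $Y_1$ is $f_1\Inv$, which is a bijection onto $X_1$, hence injective. If $y\in Y_2\cup Y_3$, write $x=g(y)\in X_2$; then both $f_2(x)$ and $f_3(x)$ are defined, they are distinct because $Y_2\cap Y_3=\emptyset$, and $g(f_2(x))=f_2\Inv(f_2(x))=x=f_3\Inv(f_3(x))=g(f_3(x))$, while any $y'$ with $g(y')=x$ must lie in $Y_2\cup Y_3$ (since $g^{-1}(X_2)\subset Y_2\cup Y_3$, as $g$ sends $Y_1$ into $X_1$) and equal $f_2(x)$ or $f_3(x)$ by injectivity. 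So $g$ is exactly two-to-one on $Y_2\cup Y_3$.

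The main obstacle is the continuity argument: one must resist the temptation to evaluate $\barf$ at the limit point $x$ directly (it may be two-valued there, and $g(y)$ picks out only one branch via the $Y_i$-membership of $y$), and instead feed the hypothesis the sequence $x_{n_j}$ converging to the \emph{wrong} candidate limit $x'$ and extract the contradiction from $Y_i$-disjointness together with injectivity of $f_i$. The passage to a subsequence along which the $y_n$ all lie in one fixed $Y_i$ is what makes this clean, and it is available because there are only three index values.
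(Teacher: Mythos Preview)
Your argument is essentially the same as the paper's: both prove surjectivity by approximating an arbitrary $y$ from the dense $Y_1$, passing to a convergent subsequence in $X$, and invoking the limit-point hypothesis; both prove continuity by taking a subsequential limit $x'$ of $g(y_n)$, showing $y\in\barf(x')$, and deducing $g(y)=x'$.  The paper packages the last step as the identity $g\circ\barf(x)=\{x\}$ (together with $\barf\circ g(y)=\{y,\hat y\}$), which makes the continuity argument a two-liner; you unroll this identity by a case analysis on $i$.

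There is one genuine slip in your writeup.  You fix $i$ so that the subsequence $y_{n_j}$ lies in $Y_i$, and then assert ``$y\in Y_i$'' for the \emph{same} $i$.  That is not justified: the $Y_i$ are not closed, so the limit $y$ need not stay in the same piece as the approximating sequence.  Your subsequent case split ``(if $i=1$) \dots (if $i\in\{2,3\}$)'' is therefore keyed to the wrong index.  The fix is immediate: let $k$ be the (unique, by surjectivity and disjointness) index with $y\in Y_k$, and run the case split on $k$ (equivalently, on whether $x'\in X_1$ or $x'\in X_2$).  Once you do that, the passage to a subsequence lying in a single $Y_i$ is unnecessary---you only need $y_{n_j}\in\barf(x_{n_j})$, which holds automatically since $g(y_{n_j})=x_{n_j}$---and ``injectivity of $f_i$'' becomes ``injectivity of $f_k$''.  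With this correction your proof is complete and matches the paper's.
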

\begin{proof}
We first prove $Y = \cup_{x\in X} \barf(x)$.
Since $Y_1 = f_1(X_1)$ is dense in $Y$ by hypothesis, for all $y\in Y$
there is a sequence $\{x_n\}\subset X_1$ with $f_1(x_n)\raw y$. Since
$X$ is compact, there is an $x_0$ and a subsequence $x_{n_i}\subset X_1$ with
$x_{n_i}\raw x_0$. Thus 
$\barf(x_0)\supset \LP\{\barf(x_{n_i})\} = \LP\{f_1(x_{n_i})\} = y$
and so $y = f_i(x_0)$ for some $i=1,2,3$ as required.

We now prove that $g$ is continuous. For $y\in Y_2$, let
$\hy = f_3\circ f_2\Inv(y)$ and for $y\in Y_3$, let
$\hy = f_2\circ f_3\Inv(y)$ and for $y\in Y_1$, 
$\hy = y$. Note that in all cases, $g(y) = g(\hy)$ and that
$\barf\circ g(y) = \{y, \hy\}$ and $g\circ \barf(x) = \{x\}$.

Assume now that $y_n\raw y \in Y$ and we show $g(y_n)\raw g(y) \in X$.
Since $X$ is compact, there is a subsequence and an $x_0\in X$ with 
$g(y_{n_i})\raw x_0$. Thus $\barf(x_0) \supset \LP\{\barf\circ g(y_{n_i})\}
= \LP\{y_{n_i}, \hy_{n_i}\}$ which contains $y$. Thus
$y\in \barf(x_0)$ and so $g(y)\in g\circ \barf(x_0) = \{x_0\}$ as needed.

\end{proof} 

\section{From the infimax minimal set to the IPSA}\label{mainmapsect}
For $\un\in \what$, recall that its asymptotic period
two points are $\ut = \lseq{\beta}.\rseq{\alpha}$ and
$\uht =\lseq{\hbeta}.\rseq{\alpha}$ defined in \eqref{alphabeta}.
\begin{defin}
The infimax minimal set corresponding to a collection
of substitutions $\un\in\what$ is $\Dun = \Cl(o(\ut, S))\subset \Sigma_3$.
\end{defin}
 It follows from Remark 23(b) in \cite{lexi} that $\ut$ is almost periodic
and thus $\Dun$ is, in fact, a
minimal set. Further, by  Theorem 16 in \cite{lexi} it
is aperiodic.  It is standard that a compact minimal
set is equal to the closure of the forward
or backward orbits of any of its elements and so we have:
\begin{lem}\label{minset}
For all $\un\in \what$, $\Dun$ is an aperiodic, minimal set and 
$\Dun = \Cl(o(\uht, S)) = \Cl(o^+(\ut, S)) = \Cl(o^-(\ut, S))$.
\end{lem}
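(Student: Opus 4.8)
The plan is to obtain all assertions from two facts imported from \cite{lexi} together with standard properties of minimal sets, the only point needing genuine (but routine) work being the membership $\uht\in\Dun$. First, Remark~23(b) of \cite{lexi} gives that $\ut$ is almost periodic (uniformly recurrent); since an almost periodic point of a homeomorphism of a compact metric space has minimal orbit closure, and $\Dun=\Cl(o(\ut,S))$ by definition, $\Dun$ is minimal. Aperiodicity is Theorem~16 of \cite{lexi}: $\ut$ is not shift-eventually-periodic, so $\Dun$ is infinite and hence aperiodic. Thus the first clause of the lemma is immediate, and the identity $\Dun=\Cl(o(\ut,S))$ is the definition.

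For the one-sided orbit closures I would use the usual ``contains a minimal subset'' argument. The set $\Cl(o^+(\ut,S))$ is nonempty, compact, and forward invariant, $S(\Cl(o^+(\ut,S)))\subseteq\Cl(o^+(\ut,S))$; by Zorn's lemma applied to the family of nonempty compact forward-invariant sets ordered by reverse inclusion it contains a minimal set $M_0$. Since $\ut\in\Dun$ and $\Dun$ is closed and $S$-invariant, $o^+(\ut,S)\subseteq\Dun$ and so $M_0\subseteq\Cl(o^+(\ut,S))\subseteq\Dun$; minimality of $\Dun$ forces $M_0=\Dun$, whence $\Dun\subseteq\Cl(o^+(\ut,S))\subseteq\Dun$. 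Applying the identical reasoning to the homeomorphism $S^{-1}$, which has the same closed invariant sets and hence the same minimal sets as $S$, gives $\Cl(o^-(\ut,S))=\Dun$.

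It remains to show $\uht\in\Dun$; granting that, $\Cl(o(\uht,S))$ is a nonempty closed $S$-invariant subset of the minimal set $\Dun$, hence equals $\Dun$. Since $\Dun$ as a subshift consists exactly of those sequences all of whose finite subwords occur in $\ut$, it suffices to show every finite subword of $\uht=\lseq{\hbeta}.\rseq{\alpha}$ occurs in $\ut=\lseq{\beta}.\rseq{\alpha}$. A finite window of $\uht$ lying at or to the right of the decimal point is a subword of $\rseq{\alpha}$, hence of $\ut$. A window lying to the left is, for $k$ large, a subword of $\Lambda^{(2k)}(2)$ by the definition of $\lseq{\hbeta}$ in \eqref{alphabeta}; because the $\Lambda_n$ are primitive the letter $2$ occurs in the sequence $\rseq{\alpha}_{2k}$ associated to the shifted index list $n_{2k+1}n_{2k+2}\dots$, and $\rseq{\alpha}=\Lambda^{(2k)}(\rseq{\alpha}_{2k})$, so $\Lambda^{(2k)}(2)$ occurs in $\rseq{\alpha}$, hence in $\ut$. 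A window straddling the decimal point is handled the same way: for $k$ large it lies inside $\Lambda^{(2k)}(2.\,3\cdots)$, and since $\rseq{\alpha}_{2k}$ begins with $3$ and, by primitivity of the $\Lambda_n$, contains an occurrence of $2$ immediately followed by $3$ or $1^i3$, the $\Lambda^{(2k)}$-image of such an occurrence appears in $\rseq{\alpha}$ and hence in $\ut$. The only step with any content here is this last recurrence statement; I expect it to be the main (minor) obstacle, and it follows from the standard fact that a primitive substitution's fixed points contain every word that appears in some iterated image of a letter, applied to the tails of the index list $\un$.
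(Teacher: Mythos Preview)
Your proposal is correct and follows the paper's route exactly: cite Remark~23(b) of \cite{lexi} for almost periodicity of $\ut$ (hence minimality of $\Dun$), cite Theorem~16 of \cite{lexi} for aperiodicity, and invoke the standard fact that a compact minimal set equals the closure of the forward or backward orbit of any of its points. The paper's entire argument is the short paragraph preceding the lemma; in particular it does not spell out why $\uht\in\Dun$, which you correctly identify as the one step requiring a word and then supply via a subword argument.

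One small correction in your straddling case: you need the two-letter word $23$ itself to occur in $\rseq{\alpha}_{2k}$, since the straddling window lies inside $\Lambda^{(2k)}(2)\Lambda^{(2k)}(3)=\Lambda^{(2k)}(23)$; an occurrence of $21^i3$ with $i>0$ would not suffice, as $\Lambda^{(2k)}(2)$ and $\Lambda^{(2k)}(3)$ would then be separated by $\Lambda^{(2k)}(1^i)$. Fortunately $23$ does occur: computing three levels down, $\rseq{\alpha}_{2k}$ begins $3\,1^{n_{2k+1}}\,2^{n_{2k+2}}\,3\,1^{n_{2k+1}+1}\dots$, so the block $2^{n_{2k+2}}3$ is present. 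With that adjustment your argument goes through.
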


Recall from the introduction that
the strategy here is to produce a geometric representation of the
infimax minimal set $\Dun$ as the attractor of an interval map using the path 
space $\cP$ as an intermediary.
By Theorem~\ref{assignment}(b) the word map $\cW$ gives an injection
from paths to sequences. However, for the special paths in the
$\spec_i$, the image path is a one-sided sequence. There is a natural
way to make these image sequences two-sided as follows. By
Theorem~\ref{assignment}(b), the image under $\cW$ of a path in $\spec_1$  
is a one-sided sequence of the form $S^j(\lseq{\beta}.\epsilon)$
with $j<0$ and so 
we redefine the word map image of paths in $\spec_1$ as  
$S^j(\ut)$ for the same $j$. Similarly, we redefine 
 the word map image of paths in $\spec_{\hone}$ as  
$S^j(\uht)$ for $j<0$. These are natural extensions because 
sequences of the form $S^j(\ut)$ and   $S^j(\uht)$ are not in the
image of $\cW$  and they clearly are in $\Dun$.
In addition as we will prove below, the only sequences of the form
$\lseq{\beta}.\ast$ and $\lseq{\hbeta}.\ast$ in $\Dun$
are $\ut$ and $\uht$ respectively.

The complication comes with extending the word map images of paths in 
$\spec_2$. These images are of the form 
$S^j(\epsilon.\rseq{\alpha})$ for $j\geq 0$. These  one-sided sequences
 can be extended
as both $S^j(\ut)$ and   $S^j(\uht)$, so the natural extension of $\cW$
is not a single valued function. Fortunately, what we
require for  the geometric representation is a map going
the other way, $\Dun\raw \cP$, and this map can be constructed in the
next theorem using Lemma~\ref{toplem} which was, of course, designed
for just this purpose.

\begin{thm}\label{mainmap}
Let $Z=\{S^j(\ut)\colon j\geq 0\} \cup \{S^j(\uht)\colon j\geq 0\}$. 
There exists a continuous surjection $G:\Dun\raw \cP$ which is
an injection on $\Dun\setminus Z$ and two-to-one on $Z$ 
with $G(S^j(\ut)) = G(S^j(\uht))$ for all $j\geq 0$.
\end{thm}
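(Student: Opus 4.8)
The plan is to construct $G$ as the inverse of the (suitably extended) word map $\cW$, invoking Lemma~\ref{toplem} with $X = \Dun$ and $Y = \cP$. First I would set up the pieces: let $X_2 = Z$ (the forward orbits of the two asymptotic period-two points) and $X_1 = \Dun\setminus Z$. By Theorem~\ref{assignment}(b) the sequence map $\cW$ is injective on $\cN$ and a bijection $\spec_i \to \sspec_i$; I want to run this correspondence backwards. Define $f_1:X_1\to\cP$ on a sequence $\us\in\Dun\setminus Z$ by declaring $f_1(\us)$ to be the unique path with $\cW(f_1(\us)) = \us$ when $\us$ is bi-infinite and genuinely in $\image(\cW|_{\cN})$, and when $\us\in\sspec_1\cup\sspec_\hone$ (i.e. $\us = S^j(\ut)$ or $S^j(\uht)$ with $j<0$, which after the extension described in the text means $\us$ has the form $\lseq\beta.\ast$ or $\lseq{\hbeta}.\ast$) let $f_1(\us)$ be the corresponding element of $\spec_1$ or $\spec_\hone$. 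For $X_2 = Z$, a sequence $S^j(\ut)$ with $j\ge 0$ and the sequence $S^j(\uht)$ both have the same forward tail $\rseq\alpha$ shifted, so both should map to the same path in $\spec_2$; set $f_2(S^j(\ut)) = f_3(S^j(\uht)) = $ the path $\Gamma\in\spec_2$ with $\cW(\Gamma) = S^j(\epsilon.\rseq\alpha)$, which exists and is unique by Theorem~\ref{assignment}. Then $Y_1 = \cN \cup \spec_1\cup\spec_\hone$, $Y_2 = Y_3 = \spec_2$ — wait, that forces $Y_2,Y_3$ not disjoint, so instead I realize the $f_2,f_3$ should land in the \emph{same} set $\spec_2$; re-reading Lemma~\ref{toplem}, the $Y_i$ must be disjoint, so actually $f_2$ and $f_3$ are the \emph{two branches} of a single two-valued map and one takes $Y_2\cup Y_3 = \spec_2$ with a bijection onto it — the cleaner route is to apply Lemma~\ref{toplem} in the form where $\barf:\Dun\to\cP$ is the genuinely set-valued extended inverse-word-map and conclude $g = G$ exists. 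I would be careful to present $Z$ correctly as the union of the two orbits and note $\barf(\us) = \{f_1(\us)\}$ off $Z$ and $\barf(S^j(\ut)) = \barf(S^j(\uht)) = \{$ that path $\}$ which is actually single-valued as a subset — the two-to-one-ness is then built into $g$ automatically.

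The three hypotheses of Lemma~\ref{toplem} must be verified. Injectivity of $f_1,f_2,f_3$ is immediate from Theorem~\ref{assignment}(b) and the observation (to be proved, see next paragraph) that the extended sequences $S^j(\ut), S^j(\uht)$ for $j<0$ are distinct and don't collide with anything in $\cW(\cN)$. Disjointness of the images and density of $Y_1 = \image(f_1)$ in $\cP = Y$: density follows because $\cN$ is dense in $\cP$ (the special sets $\spec_i$ are countable tail-equivalence classes, hence have empty interior, since any finite path extends into $\cN$ by appending edges with both $u_i\ne\epsilon$ and $v_i\ne\epsilon$ infinitely often), and $\image(f_1)\supseteq f_1(\text{bi-infinite part}) = \cN$. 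The key continuity-type hypothesis is: if $\us_n\to\us$ in $\Dun$ then $\LP\{\barf(\us_n)\}\subseteq\barf(\us)$. For this I would argue that $\cW$ (extended) is continuous from $\cP$ to $\Dun$ — this follows directly from the definition \eqref{seqmapdef} as a uniform limit of continuous maps $\cW(\Gamma^{(k)})$, with the extension on $\spec_i$ being continuous because on paths near $\Gamma_\alpha$ the word map image is eventually $S^j(\epsilon.\rseq\alpha)$-like and the extension to $S^j(\ut)$ changes only finitely many negative-index symbols in a way that matches limits — so if $\Gamma_{n}\to\Gamma$ in $\cP$ with $\Gamma_n = \barf(\us_n)$ realizing the limit point, then $\us_n = \cW(\Gamma_n)\to\cW(\Gamma)$, forcing $\cW(\Gamma) = \us$ and hence $\Gamma\in\barf(\us)$. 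Care is needed precisely because the extension is multivalued over $\spec_2$: a sequence of bi-infinite $\cW$-images can converge to $\rseq\alpha$ extended either as $\ut$ or $\uht$, so both $S^j(\ut)$ and $S^j(\uht)$ sit in $\barf$ of the relevant limit, which is exactly why $Z$ is the two-to-one locus.

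The main obstacle, and the step deserving the most care, is the claim flagged in the text: \emph{the only sequences in $\Dun$ of the form $\lseq\beta.\ast$ (resp. $\lseq{\hbeta}.\ast$) are $\ut$ (resp. $\uht$)}. This is needed to ensure $\barf$ is well-defined on $\Dun$ — i.e. that every sequence in $\Dun$ really does arise via the word map (so that $Y = \cup_{\us}\barf(\us)$ gives all of $\cP$, equivalently $g$ is defined everywhere) — and more pointedly to guarantee that the ``natural extension'' on $\spec_1,\spec_\hone$ produces sequences genuinely in $\Dun$ and that no sequence of $\Dun$ is missed or double-counted between $X_1$ and $X_2$. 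I would prove it by a desubstitution/recognizability argument: a sequence $\us\in\Dun$ with a left tail $\lseq\beta$ must, upon repeatedly desubstituting under $\Lambda_{n_1},\Lambda_{n_2},\dots$ (using Theorem~\ref{assignment} and Remark~\ref{pairs}), force $a_{-1}=1$ at every level, which by the limit formulas \eqref{alphabeta} pins $\lseq\beta$'s continuation to be exactly $\rseq\alpha$; the period-two alternation between $\lseq\beta$ and $\lseq{\hbeta}$ under $\Lambda^{(2k)}$ vs $\Lambda^{(2k+1)}$ handles the parity. Once that is in hand, Lemma~\ref{toplem} applies verbatim and yields the continuous surjection $G:\Dun\to\cP$, injective off $Z$, two-to-one on $Z$, with $G(S^j(\ut)) = G(S^j(\uht))$ for $j\ge 0$, completing the proof.
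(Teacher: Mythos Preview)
Your setup inverts the roles of $X$ and $Y$ in Lemma~\ref{toplem}, and this is fatal. The lemma takes maps $f_i:X\to Y$ and \emph{produces} $g:Y\to X$; since the goal is $G:\Dun\to\cP$, the correct assignment is $X=\cP$ and $Y=\Dun$, with the $F_i$ being the extended word map in its natural direction $\cP\to\Dun$. The paper does exactly this: $X_1=\cN\sqcup\spec_1\sqcup\spec_\hone$, $X_2=\spec_2$, and for $\Gamma\in\spec_2$ with $\cW(\Gamma)=S^j(\epsilon.\rseq\alpha)$ the two branches are $F_2(\Gamma)=S^j(\ut)$ and $F_3(\Gamma)=S^j(\uht)$, two genuinely different bi-infinite sequences, so $\barF$ is two-valued on $X_2$ and the $Y_i$ are disjoint. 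You noticed the disjointness failure (``wait, that forces $Y_2,Y_3$ not disjoint'') but the patch does not work: in your orientation $\barf(S^j(\ut))$ and $\barf(S^j(\uht))$ are each the \emph{same singleton} in $\cP$, not a two-element set, so the lemma's hypotheses cannot be matched. Worse, your $f_1$ is \emph{defined} as the inverse of $\cW$, which is precisely the map whose existence and continuity the lemma is meant to deliver.

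With the orientation corrected, the substantive work is the limit hypothesis: if $\Gamma_m\to\Gamma$ in $\cP$ then $\LP\{\barF(\Gamma_m)\}\subset\barF(\Gamma)$. This is more than ``continuity of the extended $\cW$''; one must analyse how the left-infinite portion of $\cW(\Gamma_m)$ behaves as $\Gamma_m$ approaches a path in $\spec_2$ (where $\cW(\Gamma)$ has no left part at all) and show that the only possible limits are $S^j(\ut)$ and $S^j(\uht)$. The paper does this by a direct IPSA computation, tracking the first prefix $u_k\ne\epsilon$ beyond the point of agreement. Note also that with $Y=\Dun$, density of $Y_1$ is immediate from minimality (since $F_1(\spec_1)$ is a backward half-orbit of $\ut$), so no argument about $\cN$ being dense in $\cP$ is needed. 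Finally, the ``main obstacle'' you single out --- uniqueness of the extension of $\lseq\beta$ inside $\Dun$ --- is not an ingredient of the paper's proof; it is a consequence of the theorem (via Theorem~\ref{onesided}), not a prerequisite.
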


\begin{proof} We start by constructing the various spaces and
maps needed for the application of Lemma~\ref{toplem}. 
To distinguish the specific constructions from
that general lemma we use uppercase letters to denote
the corresponding maps.
Let $X_1 = \cN \sqcup \spec_1\sqcup\spec_\hone$,  
$X_2 =\spec_2$, and $Y=\Dun$.
The definition of $F_1:X_1\raw Y$ depends on
the location of $\Gamma$. When
$\Gamma\in\cN$, let $F_1(\Gamma) = \cW(\Gamma)$.
  When $\Gamma\in\spec_1$, by Theorem~\ref{assignment} we have
$\cW(\Gamma) = S^{j}(\lseq{\beta}.\epsilon)$ for a unique $j<0$
and using this $j$ define 
$F_1(\Gamma) = S^{j}(\lseq{\beta}.\rseq{\alpha})$.
Similarly, when $\Gamma\in\spec_\hone$,
$\cW(\Gamma) = S^{j}(\lseq{\hbeta}.\epsilon)$ for a unique $j<0$
and  define $F_1(\Gamma) = S^{j}(\lseq{\hbeta}.\rseq{\alpha}) 
= S^j(\uht)$.
When $\Gamma\in X_2 =\spec_2$, again by Lemma~\ref{assignment} we have
$\cW(\Gamma) = S^{j}(\epsilon.\rseq{\alpha})$ for a unique $j\geq 0$
and using this $j$ define $F_2(\Gamma) = 
S^{j}(\lseq{\beta}.\rseq{\alpha}) = S^j(\ut)$
and $F_3(\Gamma) = S^{j}(\lseq{\hbeta}.\rseq{\alpha})= S^j(\uht)$.

We thus have as subsets of  $Y = \Dun$,
$
F_1(\spec_1) = \{S^j(\ut)\colon j<0 \},
F_1(\spec_\hone) = \{S^j(\uht)\colon j<0 \},
F_2(\spec_2) = \{S^j(\ut)\colon j\geq 0 \}$,
 and $F_3(\spec_2) = \{S^j(\uht)\colon j\geq 0 \}$.
Let $Y_1 = F_1(\cN) \sqcup F_1(\spec_1)\sqcup F_1(\spec_\hone)$,
$Y_2 = F_2(\spec_2)$, and   
$Y_3 = F_3(\spec_2)$.   

We now show that these spaces and maps have the required properties.
Since $F_1(\spec_1) = o^-(\ut,S)\setminus \{\ut\}$,  
Lemma~\ref{minset} shows that $F_1(\spec_1)$ is dense
in $\Dun$ and thus so is $F_1(X_1)$. 
All the $F_i$ are injective by Theorem~\ref{assignment} 
and Lemma~\ref{stronginjective}. That lemma also
implies that $Y_1\cap(Y_2\cup Y_3) = \emptyset$.
Now if $\Gamma, \Gamma'\in \spec_2$ with $\Gamma\not=\Gamma'$,
by Lemma~\ref{stronginjective} again, $F_2(\Gamma) \not= F_3(\Gamma')$.
Finally, for $\Gamma\in\spec_2$, $F_2(\Gamma) \not= F_3(\Gamma)$
since $\lseq{\beta} = \dots 1.$ and  $\lseq{\hbeta} = \dots 2.$.
Thus all the $Y_i$ are disjoint.

We now check the required limit assertions. Two initial 
 observations about paths in  the IPSA will be needed. The first
is that if $u\not =\epsilon$ is a prefix in an 
edge label, then  $u = 3 1^\ell$ with
$\ell \geq 0$. Thus for all $k>2$ there exists a word $W$
with $\Lambda^{(k)}(u) = W \Lambda^{(\sigma)} (1)$ 
and $\sigma = k$ or $k+1$.  The second
is that if $v\not =\epsilon$ is a suffix in an 
edge label, then  $v =  1^\ell$ with
$\ell > 0$. Thus for all $k>3$ there exists a word $W$
with $\Lambda^{(k)}(v) = \Lambda^{(k-2)} (3) W$. 

Assume that $\Gammam$ is a sequence in $\cP$ with 
$\Gammam\raw\Gamma\in \spec_2$ with 
$$\Gamma = \Gamma_0, \dots, \Gamma_w,
 \prod_{i=w+1}^\infty (\epsilon, 3, 1^{n_i}),$$
and $\Gamma_w\not = (\epsilon, 3, 1^{n_{w+1}})$. 
By Lemma~\ref{specprop}(c) and Lemma~\ref{form}(b),
 we know that  for all $k>w$ there
is a  $j\geq 0$ (independent of $k$) so that
 $$\cW(\Gamma) = 
S^j(\epsilon.\rseq{\alpha}) = 
\epsilon S^j\Lambda^{(k)}(\epsilon.3) \Lambda^{(k)}(v_k)\dots
$$

Now if $\Gamma_m\in\spec_2$ and $d(\Gammam, \Gamma) < 1/(w+2)$,
then $\Gamma_m = \Gamma$, so we may assume that  
$\Gamma_m\not\in\spec_2$ for all sufficiently large $m$.
Let $M(m)$ be such that $(\Gammam)_i = \Gamma_i$ for 
$i=1, \dots, M(m)-1$ and $(\Gammam)_{M(m)} \not= \Gamma_{M(m)}$.
If $M(m) > w$ this implies that $(\Gammam)_{M(m)} =
 (\epsilon, 3, 1^{n_{M(m)+1}})$,
$(\Gammam)_{M(m)+1} = (\epsilon, 2, \epsilon)$, and
$(u_{(m)})_{M(m)+2} =  3 1^\ell$.
Thus using the first observation above, 
$$
\cW(\Gammam) = \dots \Lambda^{(\sigma(m))}(1)
S^j\Lambda^{(M(m))}(\epsilon.3) \Lambda^{(M(m))}(v_{M(m)}) \dots
$$
with $\sigma(m) = M(m)+1$ or $M(m)+2$. Now of necessity $M(m)\raw\infty$ 
as $m\raw\infty$ and so $\sigma(m)\raw\infty$ also and thus
 by \eqref{alphabeta}, 
$\LP\{\Gammam\} \subset \{ S^j(\lseq{\beta}.\rseq{\alpha}) \cup
S^j(\lseq{\hbeta}.\rseq{\alpha})\}$, as required.

Now assume that $\Gammam$ is a sequence in $\cP$ with 
$\Gammam\raw\Gamma\in \spec_1$ with 
$$\Gamma = \Gamma_0, \dots, \Gamma_w,
 \prod_{i=1}^\infty \left((\epsilon, 2, \epsilon), 
(3 1^{n_{w + 2i + 1}}, 1,\epsilon)\right)$$
with $\Gamma_w\not = ( 3 1^{n_{w+1}}, 1, \epsilon)$
and $w$ is even. 
By Lemma~\ref{specprop} and Remark~\ref{negj},
 we know that  for all $2k>w$ there
is a  $j< 0$ (independent of $k$) so that
 $$\cW(\Gamma) = 
S^j(\lseq{\beta}.\epsilon) = \dots \Lambda^{(2k)}(u_{2k})
 S^j\Lambda^{(2k)}(1.\epsilon) \epsilon\dots
$$

As in the previous argument, we may assume that  
$\Gamma_m\not\in\spec_1$ for all sufficiently large $m$.
Let $M(m)$ be such that $(\Gammam)_i = \Gamma_i$ for 
$i=1, \dots, M(m)-1$ and $(\Gammam)_{M(m)} \not= \Gamma_{M(m)}$.
Since there is a unique outgoing edge from state $2$ 
this implies that $M(m)$ is even. Examining the IPSA to determine 
 $\Gamma_{M(m)}$ and its successors and then 
 using the second observation above, 
$$
\cW(\Gammam) = \dots  \Lambda^{(M(m))}(u_{M(m)})
S^j\Lambda^{(M(m))}(1.\epsilon)\epsilon \Lambda^{(\sigma)}(3) \dots
$$
with $\sigma(m) = M(m)-1$ or $M(m)-2$. Now   $M(m)$ and $\sigma(m)\raw\infty$ 
as $m\raw\infty$  and thus
 by \eqref{alphabeta}, 
$\Gammam \raw  S^j(\lseq{\beta}.\rseq{\alpha})$, as required.
The case $\Gammam\raw\Gamma\in \spec_\hone$ is similar.

Thus we have verified all the required properties to utilize
Lemma~\ref{toplem} and so 
$G:\Dun\raw \cP$  defined by
$G(y) = F_i\Inv(y)$ when $y\in Y_i$ is continuous,
injective on $X_1$ and two-to-one as indicated on $Z = Y_2\cup Y_3$. 
\end{proof}

\section{The one-sided case}
Theorem~\ref{mainmap} describes the relation of the infimax
minimal set $\Dun$ and the path  space $\cP$ using the map $G$ 
which is, roughly speaking, the inverse of the extended
word map. Theorem~\ref{mainmap} shows the only non-injectivity of $G$ is in 
sending points on the forward orbits of $\ut$ and $\uht$ to
the same path. Recall now that  
$\ut = \lseq{\beta}.\rseq{\alpha}$ and 
$\uht = \lseq{\hbeta}.\rseq{\alpha}$ and thus for $j\geq 0$,
$\pi(S^j(\ut)) = \pi(S^j(\uht))$ where $\pi:\Sigma_3\raw \Sigma_3^+$
is defined as $\pi(\us) = s_0 s_1 \dots$. This suggests 
that the path space $\cP$ is, in fact, homeomorphic
to the infimax minimal set restricted
to right one-sided sequences defined as 
 $\Dunp = \Cl(o^+(\rseq{\alpha},S))$. This is the main content
of next theorem.

The required homeomorphism is defined using 
 notation as in the proof of Theorem~\ref{mainmap},
by $H\colon\cP\raw \Dunp$ when $\Gamma\in X_1$
 as $H(\Gamma) = \pi\circ F_1(\Gamma)$
 and for $\Gamma\in X_2$, 
$H(\Gamma) = \pi\circ F_2(\Gamma)$. 
 Note that when $\Gamma\in X_2$,
$\pi\circ F_3(\Gamma) = \pi\circ F_2(\Gamma) = H(\Gamma)$.
Thus informally, $H = \pi\circ \barF$.
Note that it is  standard that $\Dunp = \pi(\Dun)$.

\begin{thm}\label{onesided} $\ $
\begin{enumerate}[(a)]
\item The map $H:\cP\raw \Dunp$ is a homeomorphism.
\item The map $\pi:\Delta_{\un} \raw \Delta^+_{\un}$ is  
 injective except for the case where 
 $\us = S^j(\ut)$ and  $\uhs = S^j(\uht)$
for the same $j\geq 0$, in which case $\pi(\us) = \pi(\uhs)$.
\item Each $\rseq{s}\in \Dunp$ has a unique inverse under the shift
$S$ with the sole exception that $S^{-1}(\rseq{\alpha}) = 
\{ \rseq{1\alpha}, \rseq{2\alpha} \}$.
\end{enumerate}
\end{thm}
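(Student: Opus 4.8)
The plan is to derive all three parts from Theorem~\ref{mainmap}, working in the order (b) first, then (a), then (c), since (b) is the conceptual core and the other two are corollaries. For (b), the key observation is that $\pi$ factors through $G$: by construction $H = \pi\circ\barF$, and $G$ is (essentially) the inverse of $\barF$. More precisely, I would show that $\pi(\us) = \pi(\us')$ in $\Delta_\un$ forces $G(\us) = G(\us')$, except possibly when one of $\us,\us'$ is a left-tail of the form $\lseq{\beta}.\ast$ or $\lseq{\hbeta}.\ast$. The point is that a right one-sided sequence $\rseq{s}\in\Dunp$ determines its left extension uniquely inside $\Dun$ unless that left extension is $\lseq{\beta}$ or $\lseq{\hbeta}$; this is precisely the claim flagged in the text just before Theorem~\ref{mainmap} ("the only sequences of the form $\lseq\beta.\ast$ and $\lseq{\hbeta}.\ast$ in $\Dun$ are $\ut$ and $\uht$"). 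Granting that, if $\pi(\us)=\pi(\us')$ with both left extensions distinct from $\lseq\beta,\lseq{\hbeta}$, then since $G$ is built from the word map $\cW$, which reconstructs a bi-infinite sequence from its right half together with desubstitution data, one gets $\us = \us'$; the only genuine collision is $\us = S^j(\ut)$, $\us' = S^j(\uht)$, whose images under $\pi$ agree because $\ut$ and $\uht$ share the same right half $\rseq{\alpha}$, and whose left halves $\lseq\beta,\lseq{\hbeta}$ are the two exceptional ones.

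To make this rigorous I would use the commuting relation $G\circ\pi^{-1}$ is well-defined away from $Z$: concretely, define $\tilde G\colon\Dunp\to\cP$ by choosing any preimage under $\pi$ and applying $G$; Theorem~\ref{mainmap} says $G(S^j(\ut))=G(S^j(\uht))$ for $j\ge0$, so $\tilde G$ is well-defined on all of $\Dunp$, and it is continuous (as a composition, using that $\pi$ has a continuous section off $Z$ and $G$ identifies the two sheets over $Z$). Then $H\circ\tilde G = \mathrm{id}_{\Dunp}$ and $\tilde G\circ H = \mathrm{id}_\cP$ will follow from $H = \pi\circ\barF$ and $\barF\circ G = \mathrm{id}$ (the latter being the content of Lemma~\ref{toplem} together with Theorem~\ref{assignment}). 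This gives (a): $H$ is a continuous bijection $\cP\to\Dunp$ between compact metric spaces, hence a homeomorphism. Part (b) is then read off from the fibers of $H$: since $H(\Gamma)=\pi(F_1(\Gamma))$ or $\pi(F_2(\Gamma))$ and $\pi\circ F_2 = \pi\circ F_3$, the map $\pi$ restricted to $\Dun$ is injective exactly where $G$ is, i.e.\ off $Z$, and collapses the pair $S^j(\ut),S^j(\uht)$ on $Z$.

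For (c), I would argue directly on $\Dunp$. Since $\Dunp=\pi(\Dun)$ and every bi-infinite sequence in $\Dun$ has a two-sided orbit, every $\rseq{s}\in\Dunp$ has at least one preimage $\rseq{t}\in\Dunp$ under $S$ (take $\us\in\Dun$ with $\pi(\us)=\rseq s$ and set $\rseq t = \pi(S^{-1}\us)$). For uniqueness of the backward step: $S^{-1}(\rseq s)$ in $\Dunp$ consists of all $a\rseq s$ with $a\in\{1,2,3\}$ that occur in $\Dunp$; by Remark~\ref{pairs} the admissible left-neighbors of a symbol are limited ($1$ can be preceded by $1,2,3$; $2$ by $1,2$; $3$ by $2$), so the only symbol with two or more admissible predecessors relevant here is $1$, preceded by both $1$ and $2$ (never $3$, since $31$ is allowed but then $\rseq s$ would have to begin with a letter following $1$, and one checks the word-combinatorics closes this off). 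Thus a nonunique backward step can only occur at a sequence $\rseq s$ beginning with $1$ preceded ambiguously, and chasing this back through the substitution structure — the left extensions in $\Dun$ are $\lseq\beta=\dots1.$ and $\lseq{\hbeta}=\dots2.$, both with right half $\rseq\alpha$ — forces $\rseq s = \rseq\alpha$ with the two preimages $\rseq{1\alpha}$ and $\rseq{2\alpha}$. Equivalently, and more cleanly: $S^{-1}(\rseq s)$ is multivalued in $\Dunp$ iff $\pi^{-1}(S^{-1}(\rseq s))$ meets $Z$ in $\ge2$ points, which by (b) happens iff $S^{-1}(\rseq s)\ni\rseq\alpha=\pi(\ut)=\pi(\uht)$, i.e.\ iff $\rseq s = S(\rseq\alpha)$; then the fiber is exactly $\{\pi(\ut),\pi(\uht)\}=\{\rseq{1\alpha},\rseq{2\alpha}\}$.

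I expect the main obstacle to be the claim that the only left-infinite extensions of $\rseq\alpha$-tails occurring in $\Dun$ are $\lseq\beta$ and $\lseq{\hbeta}$ — equivalently, the injectivity of $\pi$ off $Z$ — since this is where the actual combinatorics of the infimax language enters rather than formal manipulation of the diagram. Everything else is bookkeeping with $G$, $H$, and the fiber structure from Theorem~\ref{mainmap} and Lemma~\ref{toplem}; the genuinely substitutive input is recognizability/bilateral-determinacy of $\Dun$, which should follow from the strong-inequality estimates of Lemma~\ref{stronginjective} applied to the two-sided sequences together with minimality of $\Dun$ (Lemma~\ref{minset}).
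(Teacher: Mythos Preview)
Your proposal has a circularity at its heart. You want to prove (a) by constructing an inverse $\tilde G = G\circ\pi^{-1}$ to $H$, and you claim $\tilde G$ is well-defined because Theorem~\ref{mainmap} says $G(S^j(\ut)) = G(S^j(\uht))$. But that only handles preimage ambiguity on $Z$; for $\tilde G$ to be well-defined everywhere you need that $\pi(\us)=\pi(\us')$ implies $G(\us)=G(\us')$, and since $G$ is \emph{injective} on $\Dun\setminus Z$ by Theorem~\ref{mainmap}, this amounts to the statement that $\pi$ is injective off $Z$ --- which is exactly (b). So your route (b)$\Rightarrow$(a) is fine, but your proof of (b) is not: you assert that ``a right one-sided sequence determines its left extension uniquely unless that extension is $\lseq\beta$ or $\lseq{\hbeta}$'' and that the word map ``reconstructs a bi-infinite sequence from its right half,'' and at the end you flag this as the main obstacle. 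It is, and you have not resolved it.

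The paper cuts this knot by reversing the order and proving (a) \emph{directly at the path-space level}. The point you are missing is that Lemma~\ref{stronginjective}(b) is already precisely the statement that $H$ is injective: it says that for $\Gamma\neq\barGamma$ in $\cP$, the images $\cW(\Gamma)$ and $\cW(\barGamma)$ are strongly unequal at some index $j\geq 0$ (hence $H(\Gamma)\neq H(\barGamma)$), \emph{except} for the explicit pairs \eqref{badpair1}--\eqref{badpair2}. So all that remains for (a) is to rule out $H(\Gamma)=H(\barGamma)$ for a bad pair. The paper does this by a short contradiction argument: for a bad pair, $\barGamma\in\spec_1\cup\spec_\hone$, so after a suitable shift $S^j$ both $F_1(\barGamma)$ and $F_k(\Gamma)$ have right half $\rseq\alpha$; pulling these back through $G$ and using the non-bad-pair injectivity already in hand forces $\Gamma=\barGamma$. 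Continuity of $H$ comes from the limit assertions in Theorem~\ref{mainmap}, and compactness of $\cP$ finishes (a). Parts (b) and (c) then follow in one line each from (a), Theorem~\ref{mainmap}, and the relation $H\circ G=\pi$ (equivalently $G=H^{-1}\circ\pi$), exactly as you describe. So your plan for deducing (b) and (c) from (a) is right; the gap is that Lemma~\ref{stronginjective}(b) hands you (a) almost for free, and you should use it there rather than trying to establish (b) independently.
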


\begin{proof}
Since $\pi$ is continuous, the limit assertions proved
in Theorem~\ref{mainmap} show that $H$ is continuous. 
Since $\cP$ is compact, it suffices to show $H$ is injective.
Using Lemma~\ref{stronginjective}(b), $\Gamma\not=\barGamma$
implies $H(\Gamma)\not=H(\barGamma)$ except for the possibility
of a pair $\Gamma$, $\barGamma$ as in \eqref{badpair1} or
\eqref{badpair2}. To
finish part (a) we assume that there exists such a pair
with  $H(\Gamma)=H(\barGamma)$ and obtain a contradiction.
In the remainder of the proof we will use the notation
of Theorem~\ref{mainmap}.

First note that from its definition in \eqref{badpair1} or
\eqref{badpair2}, $\barGamma 
\in\spec_1\cup\spec_\hone$ and so via Remark~\ref{negj} and the
definition of $F_1$,
\begin{equation*}
 F_1(\barGamma) = \ldots S^{-j}(\Lambda^{(w)}(\epsilon.a_w))\rseq{\alpha}
\end{equation*}
for some $j>0$. Again via Remark~\ref{negj} for that same $j$, 
\begin{equation*}
F_k(\Gamma) = \ldots S^{-j}(\Lambda^{(w)}(\epsilon.a_w))\rseq{\sigma}
\end{equation*}
for some one-sided sequence $\rseq{\sigma}$, where
$k = 1$ or $2$ depending whether $\Gamma$ is in $X_1$ or
$X_2$.
Thus if $H(\Gamma)=H(\barGamma)$, left shifting by $j>0$ we have
$\rseq{\alpha} = \rseq{\sigma}$.

Now $F_1(\barGamma)$ and $F_k(\Gamma)$ are both contained in
$\Dun$ which is shift invariant, and thus 
$S^j(F_1(\barGamma))$ and $S^j(F_k(\Gamma))$ are also
both contained in $\Dun$. Thus by Theorem~\ref{mainmap} there
are $\barGamma', \Gamma'\in\cP$ with 
$S^j(F_1(\barGamma))\in \barF(\barGamma')$ and 
$S^j(F_k(\Gamma))\in \barF(\Gamma')$.
Thus $H(\Gamma') = H(\barGamma') = \rseq{\alpha}$. 
Now it follows from Lemma~\ref{specprop}(c)
 that $H(\Gamma_\alpha) = \rseq{\alpha}$,
and also note that since $\Gamma_\alpha  
= \prod_{i=1}^\infty (\epsilon, 3, 1^{n_i})$, it could not be in 
a pair of type \eqref{badpair1} or \eqref{badpair2}.  
Thus Lemma~\ref{stronginjective}(b)
implies that $\Gamma' = \barGamma' = \Gamma_\alpha$ in which
case $\cW(\Gamma)= \cW(\barGamma)$ and so  
by Lemma~\ref{stronginjective}(a), $\Gamma =  \barGamma$, 
a contradiction.

Part (b) follows from part (a), since if $\pi$ is not injective
outside of the special case given, then using Theorem~\ref{mainmap},
 $H$ could not be injective as was just proved. Part (c) follows directly from
part (b).
\end{proof}

\begin{rem}\label{onesidedrem}
\begin{enumerate}[(a)] $\ $
\item 
Theorem~\ref{onesided} says that a given right infinite
sequence $(\us)_R$ with $\us\in\Dun$ has a unique
left extension yielding a full sequence  in $\Dun$ 
except when the right infinite sequence is in
 $o^+(\ut) = o^+(\uht)$. In dynamical language, this
says that the semiconjugacy $\pi:(\Dun,S) \raw (\Dunp,S)$
is injective except on $o^+(\ut)$ and $o^+(\uht)$.
\item
 Theorem~\ref{onesided} also says the noninjectivity
of $G$ is the same as that of $\pi$, or more precisely,
$G = H^{-1}\circ \pi$.
\end{enumerate}
\end{rem}

\section{Asymptotic stable projection}
\subsection{Preliminaries}
As noted in the introduction, the first step in Rauzy's method of
 geometric representation of a substitution minimal
set is to find the stable direction of the Abelianization.
For the analog in the S-adic case we need  an
 asymptotic stable direction of the composition
of an arbitrary sequence of the Abelianization
matrices $A_n$ defined in \eqref{subabel}. The standard method for  this type
of proof uses the Hilbert metric and imitates Garret Birkhoff's
proof of the \PF\ theorem. We just give the standard definitions and
results for the case of interest here, but they of
course hold in a more general context.

The open octants in $\R^3$ are cones specified by 
a triple $(\delta_1, \delta_2, \delta_3)$ with each 
$\delta$ being $+$ or $-$ and
$\cC_{\delta_1, \delta_2, \delta_3}
 = \{\vv\in\R^3\colon \delta_1 v_1>0, \delta_2 v_2>0, \delta_3 v_3>0\}$.
Thus, for example, the positive cone is $\cC_{+, +, +}$. The projectivations
of these cones is given by their intersection with the unit sphere,
$S^3_{\delta_1, \delta_2, \delta_3}
 = \cC_{\delta_1, \delta_2, \delta_3} \cap S^3 $ 

The Hilbert projective pseudometric on  $\cC_{+, +, +}$ is
:
\[ \rho(\alpha, \beta) = 
\log \max_{1 \le i,j \le k} \frac{\alpha_i \beta_j}{\alpha_j \beta_i}. \]
This pseudometric restricts to a metric on  $S^3_{+, +, +}$ which
is equivalent to the standard metric.

If $A$ is a strictly positive $3 \times 3$ matrix and
 $f_A:S^3_{+, +, +}\raw S^3_{+, +, +}$ is defined as 
 $f_A (\alpha) = \frac{A\alpha}{\|A\alpha\|_2}$, then 
Birkhoff's Contraction Theorem (\cite{birkhoff}, \cite{Carroll})
 says that
\[\rho\left( f_A(\alpha), f_A(\beta)\right) 
\le \tau(d(A)) \rho(\alpha, \beta),\]
 where
\[ d(A) = \max_{1 \le i, j, k, l\le 3}
 \frac{a_{ik} a_{jl}}{a_{il}a_{jk}} \ge 1.\] 
and
\[ \tau(d) = \frac{\sqrt{d} - 1}{\sqrt{d} + 1}.\]

\subsection{The case of a single matrix}
To clarify the argument for the existence of an asymptotic stable projection
of the infinite composition of matrices, we first consider 
the case of a single matrix $A_n$ for some $n>0$.

The matrix $A_n$ has three distinct eigenvalues 
$0 < \lambda_1 < 1 < -\lambda_2 < \lambda_3$ (proof of
Lemma 52 in \cite{beta}). Let the  
corresponding left and right eigenvectors be 
$ \vu_1, \vu_2, \vu_3$ and $\vv_1, \vv_2, \vv_3$.
Since the eigenvalues are all distinct, we may choose the eigenvectors so  that
$\vu^{(i)} \cdot \vv^{(j)} = \delta_{ij}$, the Kronecker-delta. In particular,
if $V = \sppan(\vv^{(2)}, \vv^{(3)})$ (the unstable subspace), then 
$\vu_1 \perp V$. The projection down the unstable subspace onto the
stable subspace  is given by 
 $\Phi(\vw) = \vu_1 \cdot \vw$. Thus if $\vw$ is written in
right eigen-coordinates, $\vw = \sum c_i \vv_1$, then $c_1 = \Phi(\vw)$.
Note that $\Phi$ is the stable eigen-covector of $A_n$ in that 
$\Phi(A_n\vw) = \lambda_1 \Phi(\vw)$. Thus to accomplish a stable projection
in an asymptotic Rauzy fractal construction we need the analog of the
stable eigen-covector.

To find this covector in the infinite product case, we use
the analog of Birkhoff's proof of the \PF\ theorem which finds the 
strongest unstable eigenvector. However since we require
 a stable covector,   we seek the unstable left eigenvector
of $A_n^{-1}$ or the unstable right eigenvector of 
$A_n^{-T}$. Conjugating $A_n^{-T}$ by the involution
$\tau$ which sends $\be_3\raw - \be_3$ and leaves $\be_1$ and
$\be_2$ fixed and we obtain a nonnegative matrix
\begin{equation*}
 B_n := \tau A_n^{-T} \tau = 
\begin{pmatrix}
0 & 1 & 1\\
1 & 0 & 0 \\
0 & n & n+1
\end{pmatrix}.
\end{equation*}
A simple computation yields that
 $(B_n)^3 > 0$, and so using the Birkhoff's contraction Theorem, 
 $$\frac{(B_n)^k (\vw)}{\|(B_n)^k (\vw)\|} \raw\tau(\vu_1)^T$$
 for any $\vw\in\cC_{+, +, +}$, where $\|\cdot\|$ is the two norm. 
Translating, 
$$
\frac{(A_n^{-T})^k (\vw)}{\|(A_n^{-T})^k (\vw)\|} \raw  \vell'
$$
for any $\vw\in\cC_{+, +, -}$, where $(\vell')^T$ is the unit left eigenvector
of $A_n$ for eigenvalue $\lambda_1$ with $\vell'\in\cC_{+, +, -}$. For technical
reasons that will be clear later, it will be easier to work with
the unit left eigenvector
of $A_n$ for eigenvalue $\lambda_1$ with $\vell\in\cC_{-, -, +}$ and we then
have 
$$
\frac{(A_n^{-T})^k (\vw)}{\|(A_n^{-T})^k (\vw)\|} \raw  \vell
$$
for any $\vw\in\cC_{-, -, +}$,
\begin{rem}
A part of the process just described for the Abelianizations
has a nonlinear analog.
Treating the substitution $\Lambda_n$ as an  homomorphism  
of the free group generated by 
$\{\delta_1,\delta_2,\delta_3\}$ it maps $\delta_1\mapsto\delta_2,
\delta_2\mapsto \delta_3\delta_1^{n+1},$ and
 $\delta_3\mapsto \delta_3\delta_1^n$. This is 
 an automorphism with inverse $\delta_1\mapsto \delta_3^{-1} \delta_2,
\delta_ 2\mapsto \delta_1,$ and $
\delta_3\mapsto \delta_3(\delta_2^{-1}\delta_3)^n$.  
 Conjugating by
 the free group involution $\delta_1\mapsto\delta_1,
 \delta_2\mapsto\delta_2, $ and $\delta_3\mapsto\delta_3^{-1}$
yields $\delta_1\mapsto \delta_3\delta_2, 
\delta_2\mapsto \delta_1$, and $
\delta_3\mapsto (\delta_3\delta_2)^n \delta_3$
which may be viewed as a substitution. Because
$\Lambda_n$ is inverse-Pisot, this new ``inverse'' substitution
is Pisot.
There are a great many results and constructions associated
with Pisot substitutions (see \cite{ABB} for a thorough case study and
\cite{BST} for results on Pisot S-adic families).
A natural question is whether there is any kind of ``duality'' so
that the application of these methods
to the infimax ``inverse'' substitutions can yield  insights
into the infimax family itself.
\end{rem}

\subsection{The infinite composition}
A family of matrices $\{C_n\}$ is called \textit{eventually positive with
constant $N$} if  every product of $N$ matrices from
the family is strictly positive,
$C_{n_1} C_{n_2} \dots C_{n_N} > 0$. Note that there are
many similar notions in the literature under a variety
of names most incorporating ``primitive'', ``positive'' or
``Perron-Fr\"obenius'' in some manner.
Given a list of indices $ (n_1, n_2, \dots) = \un\in \what$ and 
family of $3\times 3$ matrices $C_n$, the product of the first
$k$ matrices is written
$C^{(k)} =  C_{n_1} C_{n_2} \dots C_{n_k}$.

The following is a standard result used in many areas
of mathematics.
It is an easy consequence of Birkhoff's contraction theorem
and the well-known fact that if $A$ is 
a non-negative matrix which has no zero row, its induced map 
$f_A:S^3_{+, +, +} \raw S^3_{+, +, +}$ 
 does not increase
the Hilbert metric, or $\rho(f_A( z_1), f_A(z_2)) \leq \rho(z_1, z_2)$,
for all $z_1, z_2\in S^3_{+, +, +}$.
\begin{lem}\label{Birkhoff}
 Let $C_n\geq 0$  be an eventually positive  family of
 non-negative matrices with constant $N$ and each
$C_n$  has no zero row. Further, assume 
 there is a  $\kappa < 1$
 such that
$\tau(C_{n_1} C_{n_2} \dots C_{n_N}) < \kappa$ for all 
$n$-tuples $(n_1, \dots, n_N)$.
Then for all $\un\in \what$ there exist a $z_{\un} \in S^3_{+, +, +}$ 
such that for all $\vw\in\cC_{+,+,+}$, 
\begin{equation*}
\lim_{k\raw\infty}\frac{C^{(k)}(\vw)}{\|C^{(k)}(\vw)\|_2} \raw  z_{\un}.
\end{equation*}
In addition, if $\vv^{(k)}$ is the unit length positive right
\PF\ eigenvector of the finite product
$C^{(k)}$ with $k>N$, then $\vv^{(k)}\raw z_{\un}$.
\end{lem}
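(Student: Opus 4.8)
The plan is to run the standard Birkhoff-contraction argument, now in the infinite-product setting, exploiting the eventual positivity to get a uniform contraction rate over blocks of length $N$. First I would set up notation: for each $k$ let $f^{(k)} = f_{C_{n_1}} \circ \dots \circ f_{C_{n_k}}$ be the composition of the induced projective maps on $S^3_{+,+,+}$, so that $f^{(k)}(\vw) = C^{(k)}(\vw)/\|C^{(k)}(\vw)\|_2$ for $\vw \in \cC_{+,+,+}$. By the cited well-known fact, each $f_{C_n}$ is non-increasing in the Hilbert metric $\rho$ (since no $C_n$ has a zero row), and by Birkhoff's Contraction Theorem combined with the hypothesis, every block $f_{C_{n_1}} \circ \dots \circ f_{C_{n_N}}$ contracts $\rho$ by a factor at most $\kappa < 1$. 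Hence $f^{(k)}$ contracts $\rho$ by at least $\kappa^{\lfloor k/N\rfloor}$.

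Next I would establish that the sequence $z^{(k)} := f^{(k)}(\vw_0)$ for a fixed reference point $\vw_0$ (say $\vw_0 = (1,1,1)$, normalized) is Cauchy in the metric $\rho$ on $S^3_{+,+,+}$, hence convergent; and that the limit is independent of $\vw_0$. For the Cauchy estimate, note that $z^{(k+m)} = f^{(k)}(f^{(k+1)}\circ\dots\circ f^{(k+m)}(\vw_0))$, so $\rho(z^{(k)}, z^{(k+m)})$ is the $\rho$-distance between $f^{(k)}$ applied to two points of $S^3_{+,+,+}$; but $\rho$ is not complete on the full open octant, so I would instead work on a fixed compact sub-simplex. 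Here is where a small amount of care is needed: after one block, the image $f_{C_{n_1}}\circ\dots\circ f_{C_{n_N}}(S^3_{+,+,+})$ lies in a compact subset $K$ of the open octant, and since all the relevant maps have images contained in such sets (uniformly, using eventual positivity with constant $N$), the tails $z^{(k)}$ for $k \ge N$ all lie in a single compact set on which $\rho$ is complete and equivalent to the Euclidean metric. On that compact set the contraction estimate gives $\rho(z^{(k)}, z^{(k+m)}) \le \kappa^{\lfloor (k-N)/N\rfloor}\,\mathrm{diam}_\rho(K)$, which is summable, so $\{z^{(k)}\}$ converges to some $z_\un \in K \subset S^3_{+,+,+}$. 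Independence of $\vw$: for any $\vw \in \cC_{+,+,+}$, $\rho(f^{(k)}(\vw), z^{(k)}) \le \kappa^{\lfloor k/N\rfloor}\rho(\vw, \vw_0) \to 0$, so $f^{(k)}(\vw) \to z_\un$ as well. Converting back from the $\rho$-metric to the Euclidean metric on the compact set gives the stated convergence $C^{(k)}(\vw)/\|C^{(k)}(\vw)\|_2 \to z_\un$.

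For the last assertion, I would use the fact that for $k > N$ the finite product $C^{(k)}$ is strictly positive, so by the classical Perron--Frobenius theorem it has a unique unit positive right eigenvector $\vv^{(k)}$, which is a fixed point of $f^{(k)}$ in $S^3_{+,+,+}$. Applying the contraction estimate to the pair $\vv^{(k)}$ (a fixed point of $f^{(k)}$) and $z^{(k)} = f^{(k)}(\vw_0)$ gives $\rho(\vv^{(k)}, z^{(k)}) = \rho(f^{(k)}(\vv^{(k)}), f^{(k)}(\vw_0)) \le \kappa^{\lfloor k/N\rfloor}\rho(\vv^{(k)}, \vw_0)$; since $\vv^{(k)}$ and $\vw_0$ both lie in the fixed compact set $K$ for $k$ large (again by eventual positivity), $\rho(\vv^{(k)}, \vw_0)$ is bounded, so $\rho(\vv^{(k)}, z^{(k)}) \to 0$, and combined with $z^{(k)} \to z_\un$ we get $\vv^{(k)} \to z_\un$.

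The main obstacle I anticipate is the bookkeeping around completeness: the Hilbert metric is only a complete metric on compact sub-simplices of the open octant, not on the open octant itself, so the argument must be careful to confine the tail of the sequence (and the eigenvectors $\vv^{(k)}$) to a single fixed compact set, which is precisely what eventual positivity with the uniform constant $N$ buys. Everything else — the contraction estimates and the triangle-inequality manipulations — is routine once that point is handled, and the equivalence of $\rho$ with the Euclidean metric on that compact set lets one transfer the conclusion to the $\|\cdot\|_2$-normalized statement as written.
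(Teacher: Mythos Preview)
Your proposal is correct and is precisely the argument the paper has in mind: the paper does not give a detailed proof of this lemma, stating only that it ``is an easy consequence of Birkhoff's contraction theorem and the well-known fact'' that a non-negative matrix with no zero row does not expand the Hilbert metric. Your careful handling of the completeness issue---confining the tails $z^{(k)}$ and the eigenvectors $\vv^{(k)}$ to the compact set $f^{(N)}(\overline{S^3_{+,+,+}})$ via eventual positivity---fills in exactly the detail needed to make that one-line justification rigorous.
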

Using this lemma we get the existence of the asymptotic
stable covector.
\begin{thm}\label{mainconv}
For all sequences $\un\in \what$, there exists an
 $\vell_{\un}\in S^3_{-, -, +}$ such that for all
$\vw\in \cC_{-, -, +}$,  
\begin{equation*}
\lim_{k\raw\infty}\frac{(A^{(k)})^{-T}(\vw)}{\|(A^{(k)})^{-T}(\vw)\|} 
\raw  \vell_{\un}.
\end{equation*}
In addition, if $\Phi^{(k)}$ is the stable eigen-covector of
the finite product $A^{(k)}$, 
then $\Phi^{(k)}\raw \Phi_\un$, where $\Phi_\un$ is defined
as $\Phi_{\un}(\vw) = \vell_{\un} \cdot \vw$. 
\end{thm}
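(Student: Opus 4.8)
The plan is to deduce Theorem~\ref{mainconv} from Lemma~\ref{Birkhoff} applied to the family $C_n := B_n = \tau A_n^{-T}\tau$ from the single--matrix subsection, and then to transport the conclusion back through the involution $\tau$ (which is a linear isometry of $\R^3$ with $\tau^2 = I$). First I would check the hypotheses of the lemma for $\{B_n\}$. That each $B_n$ is non--negative with no zero row is immediate from its displayed form. For eventual positivity I would compute the triple product $B_{n_1}B_{n_2}B_{n_3}$ directly --- apply $B_{n_3}$ to $\be_1,\be_2,\be_3$, then $B_{n_2}$, then $B_{n_1}$ --- and observe that all nine entries are strictly positive for every $(n_1,n_2,n_3)$; since $B_{n_1}B_{n_2}$ already has a zero entry in position $(2,1)$, the constant $N=3$ is both forced and sufficient.

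The crux --- and the step I expect to be the \emph{main obstacle} --- is the uniform contraction hypothesis: producing a single $\kappa<1$ with $\tau\bigl(d(B_{n_1}B_{n_2}B_{n_3})\bigr) < \kappa$ for every triple, in spite of the entries of these matrices being unbounded as the $n_i$ range over $\Np$. Here I would use that $d(\cdot)$ is invariant under conjugation by positive diagonal matrices, so only the ratios of entries matter; writing out $M := B_{n_1}B_{n_2}B_{n_3}$ explicitly, one sees that after dividing out such scalings the three rows of $M$ are \emph{nearly proportional} --- each pair of rows differs entrywise by multiplicative factors of the form $1 + O(1/n_i)$, hence bounded by a universal constant. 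Consequently $d(M)$, which for rows $i\neq j$ equals $\bigl(\max_k m_{ik}/m_{jk}\bigr)\big/\bigl(\min_k m_{ik}/m_{jk}\bigr)$, is bounded above by a universal constant, and therefore, by monotonicity of $\tau(\cdot)$, so is $\tau(d(M))$ by a universal $\kappa<1$. Lemma~\ref{Birkhoff} then applies: for every $\un\in\what$ there is $z_{\un}\in S^3_{+,+,+}$ with $C^{(k)}(\vw)/\|C^{(k)}(\vw)\| \to z_{\un}$ for all $\vw\in\cC_{+,+,+}$, and the unit positive Perron--Fr\"obenius eigenvectors $\vv^{(k)}$ of $B^{(k)} := B_{n_1}\cdots B_{n_k}$ (defined once $k>3$, since then $B^{(k)}>0$) also converge to $z_{\un}$.

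It remains to translate this back. From $\tau^2 = I$ one gets $B^{(k)} = \tau A_{n_1}^{-T}\tau \cdots \tau A_{n_k}^{-T}\tau = \tau\,(A^{(k)})^{-T}\,\tau$, so $(A^{(k)})^{-T} = \tau B^{(k)}\tau$. If $\vw\in\cC_{-,-,+}$ then $\tau\vw\in\cC_{-,-,-}$ and $-\tau\vw\in\cC_{+,+,+}$, and since $\tau$ is a linear isometry and $B^{(k)}>0$,
\[
\frac{(A^{(k)})^{-T}(\vw)}{\|(A^{(k)})^{-T}(\vw)\|}
= \tau\!\left(\frac{B^{(k)}(\tau\vw)}{\|B^{(k)}(\tau\vw)\|}\right)
= -\,\tau\!\left(\frac{B^{(k)}(-\tau\vw)}{\|B^{(k)}(-\tau\vw)\|}\right)
\;\longrightarrow\; -\,\tau z_{\un}.
\]
Thus $\vell_{\un} := -\tau z_{\un}$ lies in $S^3_{-,-,+}$ and has the stated property. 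Finally, $B^{(k)} > 0$ for $k>3$ forces, via Perron--Fr\"obenius, a simple dominant eigenvalue of $(A^{(k)})^{-T}$ with eigendirection in $\pm\cC_{-,-,+}$; equivalently $A^{(k)}$ has a simple eigenvalue of least modulus, so its stable eigen--covector $\Phi^{(k)}$ is well defined. Representing $\Phi^{(k)}$ by the unit covector $\vell^{(k)}\in\cC_{-,-,+}$ (so $\Phi^{(k)}(\vw) = \vell^{(k)}\cdot\vw$), the identity $\vv^{(k)} = -\tau\vell^{(k)}$ together with $\vv^{(k)}\to z_{\un}$ gives $\vell^{(k)}\to\vell_{\un}$, that is $\Phi^{(k)}\to\Phi_{\un}$.
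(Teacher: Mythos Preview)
Your approach is essentially the same as the paper's: apply Lemma~\ref{Birkhoff} to $B_n=\tau A_n^{-T}\tau$, verify eventual positivity with $N=3$ and a uniform contraction bound on the triple product, then transport the conclusion back through the isometry $\tau$ (the paper leaves the transport and the $\Phi^{(k)}\to\Phi_{\un}$ deduction implicit, whereas you spell them out). The only substantive difference is in the contraction estimate: the paper writes $D=B_cB_bB_a$ explicitly and observes that in every ratio $D_{ik}D_{jl}/(D_{il}D_{jk})$ the numerator and denominator share the same unique monomial $a^{m_1}b^{m_2}c^{m_3}$ of maximal total degree, with every other monomial dividing it, so (after dividing through) the ratio is bounded by the maximal number of monomials in any entry product, namely $9$, giving $\tau(d(D))\le 1/2$ --- your row--proportionality sketch reaches the same qualitative conclusion, but would need this or an equivalent computation to be made rigorous.
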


\begin{proof}
First note that 
\begin{equation*}
D := B_c B_b B_a = 
\begin{pmatrix}
b & 1 + a(b+1) & 1 + (a+1)(b+1)\\
1 & a & a+1\\
b(c+1) & c + a(b+1)(c+1) & c + (a+1)(b+1)(c+1)
\end{pmatrix}
\end{equation*}
and so the family $\{B_n\}$ is eventually positive and also note that 
each $B_n$ has no zero row. Thus to use Lemma~\ref{Birkhoff}
we must find a uniform $\kappa < 1 $ with 
$\tau(D) < \kappa$ for all $a,b,c$.

Examining the formula for $D$, we see that each
term in the definition of $d(D)$, 
\begin{equation}\label{quot}
 \frac{D_{ik} D_{jl}}{D_{il}D_{jk}},
\end{equation} has the property that the term $a^{m_1} b^{m_2} c^{m_3}$ with
$m_1 + m_2 + m_3$ maximal occurs for  a unique triple 
$(m_1, m_2, m_3)$ in both the numerator and denominator, and 
 these maximal
terms are the same in the numerator and denominator. In addition,
every other term in the numerator and denominator,
 $a^{m_1'} b^{m_2'} c^{m_3'}$, has the property that
$m_i'\leq m_i$ for all $i$. Dividing the numerator and denominator
by the maximal terms shows that the quotient in \eqref{quot} is bounded
above by the maximum number of terms $a^{m_1'} b^{m_2'} c^{m_3'}$
in any numerator or denominator, which is $9$ and so for all triples,
$\tau(D) \leq (\sqrt{9}-1)/(\sqrt{9} + 1) = 1/2$.
\end{proof}
\begin{rem}\label{missing} $\ $
\begin{enumerate}[(a)]
\item The theorem does not assert the existence of an 
assymptotic stable eigenvalue or smallest Lyapunov exponent.
Its existence can be obtained from the Multiplicative
Ergodic Theorem for almost every sequence
with respect to a shift invariant measure $\mu$
on $\Sigma_\infty^+$ as long as $B_n$ is
$\mu$-log integrable. 
\item As mentioned in the introduction, the
solution to the infimax problem for all asymptotic
symbol frequency vectors requires the inclusion
of the $n=0$ substitution in the family \eqref{subdef}
(\cite{lexi}). However when it is included
the family is no longer eventually positive and the analysis
requires many special cases and results without 
much significant additional payoff. 
\end{enumerate}
\end{rem}
The next lemma is essential for a subsequent estimate.
\begin{lem}\label{signs}
For all sequences $\un\in \what$ and for all $k>0$,
$(A^{(k)})^T (\vell_{\un}) \in \cC{-, -, +}$.
\end{lem}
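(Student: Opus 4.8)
The plan is to use the dual (covector) characterization of $\vell_\un$ together with a sign-preservation argument for the transpose maps $A_n^T$. Recall from Theorem~\ref{mainconv} that $\vell_\un \in S^3_{-,-,+}$ is the limit of the normalized iterates $(A^{(k)})^{-T}(\vw)/\|(A^{(k)})^{-T}(\vw)\|$ for $\vw \in \cC_{-,-,+}$. Equivalently, $\vell_\un$ is the asymptotic stable left eigen-covector, and for each fixed $k$ one has $\vell_\un = \lim_{m\to\infty} (A^{(k)})^{-T}(A^{(k+1,m)})^{-T}(\vw)/\|\cdot\|$ where $A^{(k+1,m)} = A_{n_{k+1}}\cdots A_{n_m}$. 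The idea is that $(A^{(k)})^T(\vell_\un)$ is, up to the same kind of limit, the asymptotic stable covector of the \emph{shifted} sequence $\un^{(k)} = n_{k+1}n_{k+2}\dots$, and hence by Theorem~\ref{mainconv} applied to that sequence it lies in $\cC_{-,-,+}$.

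First I would make this precise. Writing $\vw_m = (A^{(k+1,m)})^{-T}(\vw_0)$ for a fixed $\vw_0 \in \cC_{-,-,+}$, Theorem~\ref{mainconv} applied to $\un^{(k)}$ gives $\vw_m/\|\vw_m\| \to \vell_{\un^{(k)}} \in S^3_{-,-,+}$. Now $(A^{(k)})^{-T}(\vw_m) = (A^{(m)})^{-T}(\vw_0)$, whose normalization converges to $\vell_\un$ by Theorem~\ref{mainconv} applied to $\un$ itself. Since $(A^{(k)})^{-T}$ is a fixed invertible linear map, it is continuous on the projective sphere, so $(A^{(k)})^{-T}(\vell_{\un^{(k)}})$ is a positive scalar multiple of $\vell_\un$; equivalently $(A^{(k)})^{T}(\vell_\un)$ is a positive multiple of $\vell_{\un^{(k)}}$. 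By Theorem~\ref{mainconv} (existence part) $\vell_{\un^{(k)}} \in S^3_{-,-,+}$, hence $(A^{(k)})^T(\vell_\un) \in \cC_{-,-,+}$.

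The one gap to fill is that the \emph{positive} scalar claim requires knowing the relevant convergence is genuinely in the octant $\cC_{-,-,+}$ and not in $\cC_{+,+,-}$ — i.e. that the sign normalization is consistent. This is exactly where the earlier bookkeeping in Section~7 pays off: the conjugation by $\tau$ (sending $\be_3 \mapsto -\be_3$) turns $A_n^{-T}$ into the nonnegative matrix $B_n$ acting on $\cC_{+,+,+}$, and the Birkhoff argument there was carried out so that the limit for $B_n$ lands in $\cC_{+,+,+}$; translating back, the limit for $A_n^{-T}$ lands in $\cC_{-,-,+}$ (this is the ``technical reasons'' remark made after the single-matrix discussion). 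So in the display above, both $\vw_m/\|\vw_m\|$ and $(A^{(m)})^{-T}(\vw_0)/\|\cdot\|$ converge to vectors in $S^3_{-,-,+}$, forcing the connecting scalar to be positive. I expect this sign-consistency point to be the only real obstacle; once one commits to tracking it carefully through the $\tau$-conjugation, the rest is the routine continuity/limit manipulation sketched above. An alternative, if one prefers to avoid invoking the shifted-sequence version of Theorem~\ref{mainconv}, is a direct induction on $k$: for $k=0$ the claim is Theorem~\ref{mainconv}; for the inductive step one shows $A_{n_1}^T$ maps $\cC_{-,-,+}$ into itself up to the normalization, using the explicit form of $A_n$ in \eqref{subabel} and checking the three sign conditions on the coordinates of $A_n^T \vx$ for $\vx \in \cC_{-,-,+}$ — a short finite computation.
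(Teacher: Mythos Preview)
Your main argument is correct and is essentially the paper's own proof: the paper writes $(A^{(k)})^T(\vell_\un)$ as a limit in which the scalar factor is a ratio of norms (hence positive) and the unit-vector factor converges, by Theorem~\ref{mainconv} applied to the shifted sequence $\un' = n_{k+1}n_{k+2}\dots$, to $\vell_{\un'}\in\cC_{-,-,+}$. Your discussion of the sign-consistency issue is more explicit than the paper's, but the content is the same.

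However, your proposed \emph{alternative} route via direct induction is wrong. The map $A_n^T$ does \emph{not} send the full cone $\cC_{-,-,+}$ into itself. From \eqref{subabel} one computes
\[
A_n^T\begin{pmatrix}x_1\\x_2\\x_3\end{pmatrix}
=\begin{pmatrix}x_2\\(n+1)x_1+x_3\\n\,x_1+x_3\end{pmatrix},
\]
and for a generic $\vx\in\cC_{-,-,+}$ the second and third coordinates can have either sign (e.g.\ $\vx=(-1,-1,1)$ with $n=2$ gives $(-1,-2,-1)\notin\cC_{-,-,+}$). The lemma holds only because $\vell_\un$ is the specific asymptotic stable covector, not because of any cone-preservation property of $A_n^T$; so the inductive shortcut cannot work as stated, and one really must invoke the shifted-sequence limit as in your main argument.
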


\begin{proof} Using Theorem~\ref{mainconv},
\begin{align}
(A^{(k)})^T (\vell_{\un}) &= 
(A^{(k)})^T 
\left(\lim_{j\raw\infty}\frac{(A^{(j)})^{-T}(\vell_{\un})}{\|(A^{(j)})^{-T}(\vell_{\un})\|}\right)\\
&= \lim_{j\raw\infty} 
\frac{\|A_{n_{k+1}}^{-T} \dots A_{n_{j}}^{-T}(\vell_{\un})\|}{\|(A^{(j)})^{-T}(\vell_{\un})\|}
\frac{A_{n_{k+1}}^{-T} \dots A_{n_{j}}^{-T}(\vell_{\un})}{\|A_{n_{k+1}}^{-T} \dots A_{n_{j}}^{-T}(\vell_{\un})\|}\nonumber
\end{align}
Applying Theorem~\ref{mainconv} to the sequence 
$\un' = n_k n_{k+1}\ldots$ 
we have that the far right hand term converges to  some
$\vell_{\un'}\in \cC_{-, -, +}$.
An easy induction shows that the left hand side is a nonzero vector.
 Taking norms of both sides then
yields that  the first term on the right hand side converges to a  
scalar $c > 0$. Thus
 $(A_{(k)})^T (\vell_{\un}) = c\, \vell_{\un'}\in \cC_{-, -, +}$,
as required.
\end{proof}

\section{From paths in the IPSA to the real line}\label{Psidefsect}
For an unpointed, length $k$ word $w$, its Abelianization 
is 
\begin{equation*}
P(w) = \sum_{i=0}^{k-1} \be_{w_i} = (|w|_1, |w|_2, |w_3|) \in \R^3.
\end{equation*}
By convention $P(\epsilon) = 0$.
The Abelianization of the substitution $\Lambda_n$ is
 the matrix $A_n$ in \eqref{subabel} and is
defined by $(A_n)_{i,j} = |\Lambda_n(j)|_i$.  The action of
$A_n$ on Abelianized words satisfies
 $P\circ \Lambda_n = A_n\circ P$.

We now define a number of objects which 
depend on a given a $\un\in \what$, but we usually
suppress the dependence
on $\un$ for notational compactness.
By Theorem~\ref{mainconv}, each $\un$ yields 
an asymptotic stable eigen-covector $\Phi:\R^3\raw\R$
with $\Phi(x) = \vell \cdot x$ and $\vell$ as in the theorem.
Let $\phi = \Phi\circ P$ and so $\phi(w_1 w_2) = 
\phi(w_1) + \phi(w_2)$  and
\begin{equation*}
\phi(\Lambda_n(w)) = \vell\cdot(A_n P(w)) = (A_n^T \vell)\cdot P(w).
\end{equation*}
Letting $\vell^{(k)} = (A^{(k)})^T \vell$, we have for $a=1, 2, 3$, 
\begin{equation*}
\phi(\Lambda^{(k)}(a)) = ((A^{(k)})^T \vell)\cdot \ve_a = \ell^{(k)}_a,
\end{equation*}
the $a^{th}$ component of $\vell^{(k)}$. Note that according to 
Lemma~\ref{signs}, for all $k$, $\ell^{(k)}_1, \ell^{(k)}_2 <0$ and 
$\ell^{(k)}_3 > 0$. 

Motivated by the way a path generates a sequence in
the Dumont-Thomas expansion in
the definition of $\cW$ in \eqref{seqmapdef},
 define $\Psi_R, \Psi_L:\cP\raw \R$ by
\begin{align}
\Psi_L(\Gamma) &= \phi(u_0) + \sum_{j=1}^\infty \phi(\Lambda^{(j)}(u_j))\\
\Psi_R(\Gamma) &= \phi(a_0 v_0) + \sum_{j=1}^\infty \phi(\Lambda^{(j)}(v_j))\nonumber
\end{align}

\begin{rem} 
The infinite sum $\sum \phi(s_i)$ obviously
doesn't converge. One of the many advantages of the
Dumont-Thomas expansion is that it naturally yields
a grouping of terms so that 
the projection onto the stable subspace is expressed as 
a convergent sum. A different but equivalent method was
used by Holton and Zamboni in \cite{HZ1} and \cite{HZ2} 
for the single substitution
case. If  $\uw$ is the fixed point of  substitution,
for any sequence  $\uv\in\Cl(o(\uw, S))$ by definition
 there are $n_i\raw\infty$
with $S^{n_i}(\uw)\raw \uv$.  They  show that
with $\phi$ the composition of Abelianization and projection as
above, the sequence $\phi(w_0 w_1 \dots w_{n_i-1})$ converges to
a $f(\uv)$
yielding a uniformly continuous representation  
$f: \Cl(o(\uw, S))\raw\C$.
\end{rem}

\begin{thm}\label{Psicont}
For each $\un\in \what $,  $\Psi_L$ and
$\Psi_R$ define continuous functions on $\cP$ with
$\Psi \geq 0$ and $\Psi_R = -\Psi_L$.
\end{thm}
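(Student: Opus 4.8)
The plan is to prove three things in turn: that $\Psi_L$ and $\Psi_R$ are well defined and continuous on $\cP$; the identity $\Psi_R=-\Psi_L$; and the sign assertion $\Psi\ge 0$, which in view of $\Psi_R=-\Psi_L$ is exactly $\Psi_L\ge 0$ on all of $\cP$. The engine for the first two is a uniform geometric estimate: there is a constant $C=C(\un)$ with
\[
\bigl|\phi\bigl(\Lambda^{(j)}(w)\bigr)\bigr|\ \le\ C\,(\sqrt{3}/2)^{\,j}\qquad(j\ge 0)
\]
for every word $w$ occurring as a prefix or a suffix in a level-$j$ edge label of $\cP_{\un}$, uniformly in the path.

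To prove the estimate, recall $\vell^{(j)}=(A^{(j)})^{T}\vell$ and $\phi(\Lambda^{(j)}(w))=\vell^{(j)}\cdot P(w)$. By the IPSA and Remark~\ref{pairs}, a nonempty prefix in a level-$j$ edge label is $3\,1^{m}$ with $0\le m\le n_{j+1}$ and a suffix is $1^{m}$ with $0\le m\le n_{j+1}+1$, so $|\phi(\Lambda^{(j)}(w))|\le|\ell^{(j)}_{3}|+(n_{j+1}+1)|\ell^{(j)}_{1}|$. From $(A^{(j+1)})^{T}=A_{n_{j+1}}^{T}(A^{(j)})^{T}$ one gets the recursion $\vell^{(j)}=A_{n_{j+1}}^{-T}\vell^{(j+1)}$; writing $\vell^{(j+1)}=(p_{1},p_{2},p_{3})$, which lies in $\cC_{-,-,+}$ by Lemma~\ref{signs}, and $n=n_{j+1}$,
\[
A_{n}^{-T}(p_{1},p_{2},p_{3})=\bigl(p_{2}-p_{3},\ p_{1},\ n|p_{2}|+(n+1)p_{3}\bigr).
\]
Thus $|(\vell^{(j)})_{1}|=|p_{2}|+p_{3}$ while $(\vell^{(j)})_{3}\ge n(|p_{2}|+p_{3})$, so $n_{j+1}|\ell^{(j)}_{1}|\le(\vell^{(j)})_{3}\le\|\vell^{(j)}\|$, which cancels the factor $n_{j+1}$ and leaves $|\phi(\Lambda^{(j)}(w))|\le 3\|\vell^{(j)}\|$. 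The same formula shows that any $\vell^{(i)}\in\cC_{-,-,+}$ has $(\vell^{(i)})_{3}\ge|(\vell^{(i)})_{1}|$ and that a vector with this property is sent by $A_{n}^{-T}$ to one with in addition $v_{3}\ge|v_{2}|$; iterating (using Lemma~\ref{signs} at each level) gives $\|\vell^{(i)}\|^{2}\le 3(\vell^{(i)})_{3}^{2}$ for every $i$. Then the third coordinate of $\vell^{(j)}=A_{n_{j+1}}^{-T}\vell^{(j+1)}$ is $\ge(n_{j+1}+1)(\vell^{(j+1)})_{3}\ge 2(\vell^{(j+1)})_{3}\ge(2/\sqrt3)\|\vell^{(j+1)}\|$, hence $\|\vell^{(j)}\|\ge(2/\sqrt3)\|\vell^{(j+1)}\|$ and $\|\vell^{(j)}\|\le(\sqrt3/2)^{\,j}$. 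This proves the estimate. Since each $j$-th partial sum of $\Psi_L$ (resp.\ $\Psi_R$) depends only on $\Gamma_{0},\dots,\Gamma_{j}$, it is locally constant, hence continuous, and the estimate gives uniform convergence; so $\Psi_L$ and $\Psi_R$ are continuous.

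For $\Psi_R=-\Psi_L$ I would telescope unconditionally: since $u_{0}a_{0}v_{0}=\Lambda^{(1)}(a_{1})$ and $\Lambda^{(j)}(u_{j}a_{j}v_{j})=\Lambda^{(j)}(\Lambda_{n_{j+1}}(a_{j+1}))=\Lambda^{(j+1)}(a_{j+1})$, the sum of the $k$-th partial sums of $\Psi_L$ and $\Psi_R$ is
\[
\phi\bigl(\Lambda^{(1)}(a_{1})\bigr)+\sum_{j=1}^{k}\Bigl(\phi\bigl(\Lambda^{(j+1)}(a_{j+1})\bigr)-\phi\bigl(\Lambda^{(j)}(a_{j})\bigr)\Bigr)=\ell^{(k+1)}_{a_{k+1}},
\]
which tends to $0$ since $|\ell^{(k+1)}_{a_{k+1}}|\le\|\vell^{(k+1)}\|$; hence $\Psi_L+\Psi_R\equiv 0$. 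For the sign I would use Lemma~\ref{form}(a): the $k$-th partial sum of $\Psi_R$ is $\phi$ of the right-hand part of $\cW(\Gamma^{(k)})$, i.e.\ $\phi(q_{k})$ with $q_{k}$ a suffix of $\Lambda^{(k)}(a_{k})$ of length $|\Lambda^{(k)}(a_{k})|-j_{k}$, $0\le j_{k}<|\Lambda^{(k)}(a_{k})|$. An induction on $k$ — using only the shapes $1\mapsto 2,\ 2\mapsto 31^{n+1},\ 3\mapsto 31^{n}$ and the signs $\ell^{(k)}_{1},\ell^{(k)}_{2}<0<\ell^{(k)}_{3}$ of Lemma~\ref{signs} — shows $\phi(q)\le 0$ for every \emph{proper} suffix $q$ of $\Lambda^{(k)}(a)$; the step factors $\Lambda^{(k+1)}=\Lambda^{(k)}\circ\Lambda_{n_{k+1}}$ and uses that every letter of $\Lambda_{n_{k+1}}(a)$ past the first is a $1$, whose image contributes a nonpositive multiple of $\ell^{(k)}_{1}$, while the remaining initial block contributes $\phi$ of a proper suffix of $\Lambda^{(k)}(2)$ or $\Lambda^{(k)}(3)$, covered by the hypothesis. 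Therefore the $k$-th partial sum of $\Psi_R$ is $\le 0$ when $j_{k}\ge 1$ and equals $\ell^{(k)}_{a_{k}}$ (of modulus $\le\|\vell^{(k)}\|$) when $j_{k}=0$, so it is $\le\|\vell^{(k)}\|\to 0$ in all cases; hence $\Psi_R\le 0$ and $\Psi_L=-\Psi_R\ge 0$.

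The hard part is the second paragraph: obtaining geometric decay of $\|\vell^{(j)}\|$ \emph{and} killing the growing factors $n_{j+1}$ at the same time. This is exactly where the sign information of Lemma~\ref{signs}, the recursion $\vell^{(j)}=A_{n_{j+1}}^{-T}\vell^{(j+1)}$, and the explicit entries of $A_{n}^{-T}$ are all needed; the telescoping identity and the suffix induction are routine once this estimate is in hand. Note that only the upper bound $\|\vell^{(j)}\|\le(\sqrt3/2)^{j}$ is used, not the existence of a limiting exponent, consistently with Remark~\ref{missing}(a).
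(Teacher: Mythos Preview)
Your argument is correct, and the overall architecture (geometric decay of $\vell^{(k)}$ from the recursion plus Lemma~\ref{signs}; telescoping for $\Psi_L+\Psi_R=0$) matches the paper. The details differ in two places, and it is worth noting what each route buys.

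For the decay, the paper works with the \emph{forward} recursion $\vell^{(k)}=A_{n_k}^{T}\vell^{(k-1)}$ and reads off directly that when $u_k=31^{m}$ with $0\le m\le n_{k+1}$,
\[
\phi(\Lambda^{(k)}(u_k))=\ell_3^{(k)}+m\,\ell_1^{(k)}\in[\ell_3^{(k+1)},\,\ell_3^{(k)}],
\]
since $\ell_3^{(k)}+n_{k+1}\ell_1^{(k)}=\ell_3^{(k+1)}>0$. Thus each prefix term is itself nonnegative, and the same identity gives $\ell_3^{(k)}<\tfrac12\,\ell_3^{(k-1)}$. So convergence, continuity, and $\Psi_L\ge 0$ all fall out of one line. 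Your backward recursion with $A_n^{-T}$ reaches the same $1/2$-decay for $\ell_3^{(k)}$ (indeed your $(\vell^{(j)})_3\ge 2(\vell^{(j+1)})_3$ is exactly this), but you package it as a bound on $\|\vell^{(j)}\|$ and then need a separate step to absorb the factor $n_{j+1}$; the paper absorbs it automatically via $\ell_3^{(k+1)}=\ell_3^{(k)}+n_{k+1}\ell_1^{(k)}$.

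For the sign, the paper's term-by-term positivity of the prefix contributions is the whole proof of $\Psi_L\ge 0$. Your route via proper suffixes of $\Lambda^{(k)}(a)$ is correct but slightly longer, and one sentence needs tightening: when the proper suffix $q$ of $\Lambda^{(k+1)}(a)$ does not reach the leading block, the ``remaining initial block'' is a suffix of $\Lambda^{(k)}(1)$, possibly the full word, not necessarily a proper suffix of $\Lambda^{(k)}(2)$ or $\Lambda^{(k)}(3)$. This is harmless because a full $\Lambda^{(k)}(1)$ contributes $\ell_1^{(k)}<0$, but you should say so. With that fix your induction goes through, and the limiting argument (partial sums $\le\|\vell^{(k)}\|\to 0$) is fine.
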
 
\begin{proof}
By definition $\vell^{(k)} = A^T_{n_k} \vell^{(k-1)}$ and
so using the formula for $A_{n_{k}}^T$ and  Lemma~\ref{signs} we get
\begin{align}\label{zero}
\ell_1^{(k)} &= \ell_2^{(k-1)} < 0\\
\ell_2^{(k)} &= (n_k + 1) \ell_1^{(k-1)} + \ell_3^{(k-1)} < 0\nonumber\\
\ell_3^{(k)} &= n_k \, \ell_1^{(k-1)} + \ell_3^{(k-1)} > 0.\nonumber
\end{align}
Thus 
\begin{equation}\label{one}
- n_k\, \ell_1^{(k-1)} < \ell_3^{(k-1)} < - (n_k + 1) \ell_1^{(k-1)},
\end{equation}
and so
\begin{equation}\label{two}
|\ell_2^{(k-2)}| = |\ell_1^{(k-1)}|
 < \frac{\ell_3^{(k-1)}}{n_k} \leq  \ell_3^{(k-1)}.
\end{equation}
Using the right hand inequality of \eqref{one},
\begin{align}\label{three}
0 &< \frac{\ell_3^{(k)}}{\ell_3^{(k-1)}} = 
\frac{n_k  \ell_1^{(k-1)} + \ell_3^{(k-1)}}{\ell_3^{(k-1)}}\nonumber\\
&= 1 + \frac{n_k  \ell_1^{(k-1)}}{\ell_3^{(k-1)}}
< 1 + \frac{n_k  \ell_1^{(k-1)}}{-(n_k + 1)   \ell_1^{(k-1)}}\\
&\leq 1-\frac{1}{2} = \frac{1}{2},\nonumber
\end{align}
and so when $k>0$, 
\begin{equation}\label{four}
\ell_3^{(k)} < \frac{1}{2^{k}} \ell_3^{(0)}.
\end{equation}

Next we examine the terms making up the sum defining $\Psi_L$.
Using the IPSA, we have that the possibilities for $u_k$ are
\begin{equation*}
u_k = \epsilon, 3, 31, \dots, 3 1^{n_{k + 1}}.
\end{equation*}
Thus when $\phi(\Lambda^{(k)}(u_k))\not = 0$, 
$\phi(\Lambda^{(k)}(u_k)) =  \phi(\Lambda^{(k)}(3 1^m))
= \ell_3^{(k)} + m \ell_1^{(k)}$ with $0\leq m \leq n_{k + 1}$.
Note that 
when $m =n_{k + 1}$, $\ell_3^{(k)} + n_{k + 1} \ell_1^{(k)} =
\ell_3^{(k+1)}$, using \eqref{zero}.
Thus since $\ell_1^{(k)} < 0$,
when $\phi(\Lambda^{(k)}(u_k)) \not= 0$ and
$0 < m \leq n_{k+1}$, we have 
\begin{equation}\label{ineq1}
0 < \ell_3^{(k+1)} \leq \phi(\Lambda^{(k)}(u_k))   < \ell_3^{(k)},
\end{equation}
 and when $m=0$,
\begin{equation}\label{ineq2}
0 < \ell_3^{(k+1)} < \phi(\Lambda^{(k)}(u_k))  \leq \ell_3^{(k)}.
\end{equation}
The convergence, positivity and continuity of $\Psi_L$ then follow
from \eqref{four}.

The proof for  $\Psi_R$ is similar.
The possibilities for $v_k$ are
\begin{equation*}
v_k = \epsilon, 1, 1^2, \dots,  1^{n_{k} + 1}.
\end{equation*}
Thus when $\phi(\Lambda^{(k)}(v_k))\not = 0$, 
$\phi(\Lambda^{(k)}(v_k)) =  \phi(\Lambda^{(k)}(1^m))
=  m \ell_1^{(k)}$ and note that 
when $m =n_{k + 1}$, then
 $ (n_{k} + 1) \ell_1^{(k)} = \ell_2^{(k+1)}-\ell_3^{(k+1)}$, 
using \eqref{zero}.
Thus  either $v_k = \epsilon$ and
then $\phi(\Lambda^{(k)}(v_k)) = 0$, or 
\begin{equation*}
0 < |\ell_1^{(k)}| \leq |\phi(\Lambda^{(k)}(v_k))|
   \leq |\ell_2^{(k+1)}| + |\ell_3^{(k+1)}|,
\end{equation*}
and  convergence and continuity of $\Psi_R$ again follow
from \eqref{four} after using \eqref{two}.

To show that $\Psi_R = -\Psi_L$  note that Lemma~\ref{form}(b)
gives that for all $k>0$,
\begin{align*}
\Psi_L(\Gamma) + \Psi_R(\Gamma) &= \phi(u_0 a_0 v_0) + 
 \sum _{j=1}^k \left(\phi(\Lambda^{(j)}(u_j)) + \phi(\Lambda^{(j)}(v_j))\right)\\
&+\sum_{j=k+1}^\infty \left(\phi(\Lambda^{(j)}(u_j))
 + \phi(\Lambda^{(j)}(v_j))\right)\\ 
&= \phi(\Lambda^{(k)}(a_k)) + 
\sum_{j=k+1}^\infty \left(\phi(\Lambda^{(j)}(u_j))
 + \phi(\Lambda^{(j)}(v_j))\right).
\end{align*} 
But $\phi(\Lambda^{(k)}(a_k)) = \ell_{a_k}^{(k)} \raw 0$
by \eqref{two} and \eqref{four}
 and since the sums defining $\Psi_L$ and $\Psi_R$ converge we have
 $\Psi_L  + \Psi_R = 0$.
\end{proof}

\section{The Rauzy fractal}
We first construct the Rauzy fractal corresponding to
a list $\un\in\what$ in the real line as the image under $\Psi_L$
of the path space $\cP_\un$. In the next section we compose
$\Psi_L$ with the map $G$ from Section~\ref{mainmapsect}
to get the full representation of $\Dun$ into $\R$.

The Rauzy fractal is $\cR = \Psi_L(\cP)$. It has three natural 
subpieces $\cR_1, \cR_2$, and $\cR_3$ defined by
$\cR_a = \Psi_L(\cP[a])$ where
$\cP[a] = \{\Gamma\in\cP\colon a_0 = a\}$,
all the paths that begin at state $a$.
The conjugacy 
in Theorem~\ref{maintext} below implies  that $\cR$ is a Cantor set.

As noted in the introduction, a central issue in the 
process of a geometric representation is the disjointness
of the subpieces of the Ruazy fractal.
The main
result of this section is the convex hulls
of  the subpieces $\cR_a$ in  $\R$ are disjoint except
perhaps at their endpoints. The argument proceeds by specifying
the endpoints of the intervals using particular paths in $\cP_\un$.
This allows us to specify values of the endpoints of the 
intervals in terms of the coordinates of the asymptotic
stable covector $\vell_{\un}$ given by Theorem~\ref{mainconv}. 
Note that some of these paths have
already been defined but are given new names here for clarity of
exposition.
\begin{align*}
\Gamma_{min} = \Gamma_\alpha &=  \prod_{i=1}^\infty (\epsilon, 3, 1^{n_i}),\\
\Gamma_{3max} &= (\epsilon, 3, 1^{n_1+1}) 
 \prod_{i=1}^\infty \left((\epsilon, 2, \epsilon), 
(3 , 1,1^{n_{2i+1}})\right)\\
\Gamma_{2min} = \Gamma_\hbeta &= 
 \prod_{i=1}^\infty \left((\epsilon, 2, \epsilon), 
(3 1^{n_{2i}}, 1,\epsilon)\right)\\
\Gamma_{2max} &=
 \prod_{i=1}^\infty \left((\epsilon, 2, \epsilon), 
(3 , 1,1^{n_{2i}})\right)\\
\Gamma_{1min} = \Gamma_\beta &= (3 1^{n_{1}}, 1,\epsilon),
 \prod_{i=1}^\infty \left((\epsilon, 2, \epsilon), 
(3 1^{n_{2i + 1}}, 1,\epsilon)\right)\\
\Gamma_{max} &=  \prod_{i=1}^\infty 
\left((3,1,  1^{n_{2i-1}}), (\epsilon, 2, \epsilon)\right)
\end{align*}
\begin{thm}\label{inequality}
Each subpiece  $\cR_a$ of the Rauzy fractal $\cR$ 
is contained in an interval $J_a$ and these intervals
intersect in at most one point.  
Specifically, with the paths defined above, 
\begin{equation*} 
\begin{split}
0 &= \Psi_L(\Gamma_{min}) \leq \cR_3 \leq \Psi_L(\Gamma_{3max}) = -\ell_2 
 = \Psi_L(\Gamma_{2min}) \leq \cR_2\\ &\leq \Psi_L(\Gamma_{2max}) = -\ell_1 =
 \Psi_L(\Gamma_{1min}) \leq \cR_1 \leq \Psi_L(\Gamma_{max}) = \ell_3-\ell_2.
\end{split}
\end{equation*}
Further, the paths that achieve the extremes are 
only those given above, or
$\Psi_L\Inv(0) = \Gamma_{min}$, 
$\Psi_L\Inv(\cR_3\cap \cR_2) = \{\Gamma_{3max}, \Gamma_{2min}\}$,
$\Psi_L\Inv(\cR_2\cap \cR_1) = \{\Gamma_{2max}, \Gamma_{1min}\}$,
and $\Psi_L\Inv(\ell_3-\ell_2) = \Gamma_{max}$.
\end{thm}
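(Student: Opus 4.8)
\textit{Proof approach.}
The plan is to turn $\Psi_L$ into a sum over the ``one-indices'' of a path and estimate that sum. Using the explicit edge labels of the IPSA (as in the proof of Theorem~\ref{Psicont}) one sees that $u_j\neq\epsilon$ exactly when $a_j=1$, in which case $u_j=3\,1^{m_j}$ with $0\le m_j\le n_{j+1}$, and that the admissible state transitions are $1\to\{2,3\}$, $2\to1$, $3\to\{2,3\}$. Hence, writing $j_0<j_1<\cdots$ for the indices with $a_{j_i}=1$ and using $\phi(\Lambda^{(j)}(3))=\ell_3^{(j)}$, $\phi(\Lambda^{(j)}(1))=\ell_1^{(j)}$ and additivity of $\phi$, one gets $\Psi_L(\Gamma)=\sum_i\bigl(\ell_3^{(j_i)}+m_i\,\ell_1^{(j_i)}\bigr)$. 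The gaps satisfy $j_{i+1}-j_i\ge2$, with $j_{i+1}-j_i=2$ exactly when $a_{j_i+1}=2$ (a short gap, and then $0\le m_i\le n_{j_i+1}$) and $j_{i+1}-j_i\ge3$ or no further one-index when $a_{j_i+1}=3$ (a long gap, and then $0\le m_i\le n_{j_i+1}-1$); moreover $j_0=0$, $1$, or $\ge2$ according as $a_0=1$, $2$, or $3$. Throughout I would use the two consequences of \eqref{zero}: the identity $\ell_3^{(k)}+n_{k+1}\ell_1^{(k)}=\ell_3^{(k+1)}$, and the identity $\ell_2^{(k)}-\ell_2^{(k-2)}=\ell_3^{(k)}$ which, together with $\ell_1^{(0)}=\ell_2^{(1)}-\ell_3^{(1)}$ and $\ell_2^{(k)},\ell_3^{(k)}\to0$ (cf.\ \eqref{two}, \eqref{four}), telescopes to $-\ell_1^{(m)}=\sum_{i\ge0}\ell_3^{(m+1+2i)}$ and $-\ell_2^{(m)}=\sum_{i\ge1}\ell_3^{(m+2i)}$.

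Granting this, evaluating $\Psi_L$ at each of the six distinguished paths is a direct computation: reading off its $u_j$'s and collapsing the resulting series with the two identities above gives $\Psi_L(\Gamma_{min})=0$, $\Psi_L(\Gamma_{3max})=\Psi_L(\Gamma_{2min})=-\ell_2$, $\Psi_L(\Gamma_{2max})=\Psi_L(\Gamma_{1min})=-\ell_1$, and $\Psi_L(\Gamma_{max})=\ell_3-\ell_2$. For the upper bounds, each summand obeys $\ell_3^{(j_i)}+m_i\ell_1^{(j_i)}\le\ell_3^{(j_i)}$ (equality iff $m_i=0$), and $j_i\ge2i+c$ with $c=0,1,2$ on $\cP[1],\cP[2],\cP[3]$; since $\ell_3^{(\cdot)}$ is strictly decreasing this yields $\Psi_L(\Gamma)\le\sum_{i\ge0}\ell_3^{(2i+c)}$, equal to $\ell_3-\ell_2$, $-\ell_1$, $-\ell_2$ respectively, with equality forcing every $m_i=0$ and every $j_i=2i+c$ (all gaps short and $j_0=c$), which pins down $\Gamma$ to be $\Gamma_{max}$, $\Gamma_{2max}$, $\Gamma_{3max}$. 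Also $\Psi_L\ge0$ always, and $\Psi_L(\Gamma)=0$ only when every $u_j=\epsilon$, i.e.\ $\Gamma=\Gamma_{min}$.

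The real obstacle is the lower bounds on $\cP[1]$ and $\cP[2]$, since a long gap deletes terms from $\sum_i(\ell_3^{(j_i)}+m_i\ell_1^{(j_i)})$ and a purely term-by-term estimate is too weak (one has $\ell_3^{(k+1)}<-\ell_1^{(k)}$). I would locate the \emph{first} long gap, say after $j_I$ (offset $c=0$ on $\cP[1]$, $c=1$ on $\cP[2]$, so $j_i=2i+c$ for $i\le I$): each summand with $i<I$ is $\ge\ell_3^{(j_i+1)}=\ell_3^{(2i+1+c)}$ (minimum at $m_i=n_{j_i+1}$), the summand at $j_I$ is $\ge\ell_3^{(j_I+1)}-\ell_1^{(j_I)}$ (minimum at $m_I=n_{j_I+1}-1$), and crucially $-\ell_1^{(j_I)}=-\ell_1^{(2I+c)}$ is exactly the remaining tail $\sum_{l\ge I}\ell_3^{(2l+1+c)}$ by the telescoping identity; combined with nonnegativity of the later summands this gives $\Psi_L(\Gamma)>\sum_{i\ge0}\ell_3^{(2i+1+c)}=-\ell_1$ (on $\cP[1]$) or $-\ell_2$ (on $\cP[2]$). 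When no long gap occurs the one-indices are $j_i=2i+c$ for all $i$, and the infimum is attained precisely when every $m_i$ is maximal, giving $\Gamma_\beta=\Gamma_{1min}$ resp.\ $\Gamma_\hbeta=\Gamma_{2min}$.

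Finally, the chain of inequalities in the statement follows because each $\cR_a$ lies between its minimum and maximum, which are the displayed values; and the fibres over the four endpoints are exactly the stated singleton or pair once one records the strict separations $0<-\ell_2<-\ell_1<\ell_3-\ell_2$, which come from \eqref{one}--\eqref{two}: from $\ell_3^{(0)}>n_1|\ell_1^{(0)}|\ge|\ell_1^{(0)}|$ one gets $-\ell_1<\ell_3-\ell_2$, and from $|\ell_2^{(0)}|=|\ell_1^{(1)}|<\ell_3^{(1)}/n_2\le\ell_3^{(1)}<\ell_3^{(0)}/(n_1+1)<|\ell_1^{(0)}|$ one gets $-\ell_2<-\ell_1$, so no path from a neighbouring $\cP[a]$ can reach an endpoint of $J_a$.
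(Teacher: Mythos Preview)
Your proof is correct, but the route you take for the lower bounds on $\cR_1$ and $\cR_2$ is genuinely different from the paper's. You attack $\min_{\cP[a]}\Psi_L$ head-on: you reparametrize $\Psi_L$ as a sum over the one-indices $j_i$, then run a ``first long gap'' argument in which the crucial telescoping identity $-\ell_1^{(m)}=\sum_{i\ge0}\ell_3^{(m+1+2i)}$ absorbs the entire tail, producing a strict overshoot by $\ell_3^{(2I+1+c)}$. This works, and your uniqueness analysis (no long gap, all $m_i$ maximal) correctly isolates $\Gamma_\beta$ and $\Gamma_\hbeta$.

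The paper instead exploits the duality $\Psi_R=-\Psi_L$ from Theorem~\ref{Psicont}: minimizing $\Psi_L$ on $\cP[a]$ is the same as maximizing $\Psi_R$, and since every suffix is either $\epsilon$ or a power of $1$ (so each suffix term is $\le0$), one reads off immediately that $\max_{\cP[a]}\Psi_R=\phi(a)=\ell_a$, attained uniquely by the path whose suffixes are all $\epsilon$. This is a two-line argument once $\Psi_R=-\Psi_L$ is in hand. Your approach buys self-containment (you never invoke $\Psi_R$) and a more explicit quantitative margin; the paper's buys brevity and symmetry between the prefix and suffix sides. For the upper bounds and the evaluations of the six distinguished paths, your argument and the paper's coincide in substance, though you package the combinatorics more systematically through the one-index/gap bookkeeping.
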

\begin{proof}
For any prefix  $u_k$ of an arc label, 
$\phi(\Lambda^{(k)}(u_k)\geq 0$ by \eqref{ineq1} and \eqref{ineq2}.
 Since $\Gamma_{min}$ is
the unique path in $\cP$ with all prefixes equal
to $\epsilon$, it uniquely achieves $0 = \min(\cR)$.
As for $\max(\cR)$, note that any arc with a prefix
label not equal to $\epsilon$ must emerge from the
state $1$. Of these, the maximal contribution to
$\Psi_L$ comes from $\phi(\Lambda^{(k)}(3))$ and since
by \eqref{ineq1} and \eqref{ineq2} again,
 $\phi(\Lambda^{(k+1)}(3)) < \phi(\Lambda^{(k)}(3))$,
whatever path achieves $ \max(\cR)$ must begin at 
state $1$. In addition, since a non-$\epsilon$ prefix
occurs at most on every other level, $\max(\cR)$
is achieved uniquely by $\Lambda_{max}$.

Using the logic of the previous paragraph
 $\max(\cR_2)$ will be achieved   by the unique path 
originating at state $2$ that goes to state
$1$ in the least number of steps, namely, 
$\Lambda_{2max}$. Similarly, $\max(\cR_3)$ is uniquely
achieved by  $\Lambda_{3max}$.

To see what achieves  $\min(\cR_1)$ and $\min(\cR_2)$
we use suffix labels and the fact that $\Psi_R = - \Psi_L$
from Theorem~\ref{Psicont}. Now all suffixes are $\epsilon$ or $1^m$ and
any path $\Gamma$ emerging from state $1$ by definition has
$a_0 =1$ and so $\Psi_R(\Gamma) = \ell_1 + \dots$. Since
$\ell_1<0$ by Theorem~\ref{mainconv}, $\min(\cR_1)$ would be uniquely achieved 
by a path with only $\epsilon$ as a suffix, and this
path is $\Gamma_{1min}$. Similarly, $\min(\cR_2)$  is
uniquely achieved by $\Gamma_{2min}$.

We now compute the values of the extrema. Since 
$\Gamma_{1min}$ and $\Gamma_{2min}$ have all suffixes
equal to $\epsilon$, it follows immediately that 
$\Psi_R(\Gamma_{2min}) = \phi(2) =  \ell_2 =-\Psi_L(\Gamma_{2min})$.
 Similarly,
$\Psi_L(\Gamma_{1min}) = -\ell_1$. 
 
On the other hand, computing $\Phi_L(\Gamma_{1min})$ from the definition
yields
\begin{equation*}
\Phi_L(\Lambda_{1min}) = \sum_{i=0}^\infty
 \phi(\Lambda^{(2i)}(3 1^{n_{2i+1}})) = 
\sum_{i=0}^\infty \ell_3^{(2i)} + n_{2i+1}\, \ell_1^{(2i)}
= \sum_{i=0}^\infty \ell_3^{(2i+1)}
\end{equation*}
using \eqref{zero} in the last equality. Thus
$-\ell_1 = \sum_{i=0}^\infty \ell_3^{(2i+1)}$  Similarly,
$-\ell_2 = \Psi_L(\Gamma_{2min}) = 
\sum_{i=1}^\infty \ell_3^{(2i)}$. 

Now observe that also from the definition, $\Phi_L(\Gamma_{3max})
= \sum_{i=1}^\infty \phi(\Lambda^{(2i)}(3))$, so 
 $\Phi_L(\Gamma_{3max}) = \Psi_L(\Gamma_{2min}) = -\ell_2$.
 Similarly, $\Phi_L(\Gamma_{2max}) = \Psi_L(\Gamma_{1min}) = -\ell_1$.
Finally, \newline
$\Phi_L(\Gamma_{max}) = \sum_{i=0}^\infty \phi(\Lambda^{(2i)}(3))
= \sum_{i=0}^\infty \ell^{(2i)}_3 = \ell_3 - \ell_2$.
\end{proof}
\begin{rem}\label{singsub}
For a single substitution $\Lambda_n$, or equivalently
when $\un = n n n \dots$, the vector $\vell$ is the
left eigenvector corresponding to the stable eigenvalue
$\lambda_1$ of $A_n$. Using the eigenvector equation
and the characteristic polynomial we have that
\begin{equation}\label{ellform}
\ell_1 = -\frac{\lambda_1 \, \ell_3}{1-\lambda^2_1}\ \ \ \text{and}
\ \ \ 
\ell_2 = -\frac{\lambda_1^2\, \ell_3}{1-\lambda^2_1}
\end{equation}
Since $\vell^{(i)} = (A^T)^i \vell = \lambda_1^i \vell$, the summation
formulas from the previous proof 
$-\ell_1 = \sum_{i=0}^\infty \ell_3^{(2i+1)}$ and
$-\ell_2 =  
\sum_{i=1}^\infty \ell_3^{(2i)}$ are geometric series 
for \eqref{ellform}.
\end{rem}
\section{From sequences to the real line}\label{upsilondef}
We now construct the desired real valued-map on the symbolic system $\Dun$
by composing the maps constructed in Sections~\ref{mainmapsect} 
and \ref{Psidefsect}.
\begin{defin}
Let $\Upsilon = \Psi_L\circ G:\Dun\raw [0,\infty)$ and 
 $\Upsilon^+ = \Psi_L\circ H^{-1}:\Dunp\raw [0,\infty)$.
\end{defin}
\begin{rem}\label{rem1} $\ $
\begin{enumerate}[(a)]
\item By Remark~\ref{onesidedrem}, $G = H^{-1}\circ \pi$ and so
$\Upsilon = \Psi_L\circ H^{-1}\circ\pi = \Upsilon^+\circ \pi$. 
\item From the definition, $F_1(\Gamma_\beta) = S^{-1}(\ut)$ 
and so using Theorem~\ref{inequality}, 
$\Upsilon(S^{-1}(\ut)) 
= \Psi_L(\Gamma_\beta) = -\ell_1$. Similarly,
$\Upsilon(S^{-1}(\uht)) = -\ell_2$.
Also, $F_2(\Gamma_\alpha) = \ut$ and $F_3(\Gamma_\alpha) = \uht$,
so again using Theorem~\ref{inequality}, 
$\Upsilon(\ut) = \Upsilon(\uht) = 0$.
\end{enumerate}
\end{rem}
The map $\Psi_L$ and thus the map $\Upsilon$ 
is defined via a summation utilizing
a very specific grouping
into words of a sequence $\us\in\Dun$.  The shifted sequence
 $S(\us)$ could depend on a quite different
grouping. The next lemma confirms that the effect
of the shift is as expected.
\begin{lem}\label{shift}
For all $\us\in\Dun$, $\Upsilon(S(\us)) = \Upsilon(\us) + \phi(s_0)$
and for all $\rseq{s}\in\Dunp$, $\Upsilon^+(S(\rseq{s})) = 
\Upsilon^+(\rseq{s}) + \phi(s_0)$.
\end{lem}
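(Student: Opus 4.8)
The plan is to reduce everything to the path space via the maps $G$ and $H$, and then track how a one–step shift on $\Delta_{\un}$ (or $\Delta^+_{\un}$) changes the Dumont–Thomas expansion that defines $\Psi_L$. First I would reduce the two statements to one: by Remark~\ref{rem1}(a) we have $\Upsilon = \Upsilon^+\circ\pi$ and $\pi\circ S = S\circ\pi$, so the identity for $\Upsilon^+$ on $\Delta^+_{\un}$ follows from the identity for $\Upsilon$ on $\Delta_{\un}$ once we note $\pi$ is surjective; hence it suffices to prove $\Upsilon(S(\us)) = \Upsilon(\us) + \phi(s_0)$ for all $\us\in\Delta_{\un}$. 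Next I would recall from the paper that $\Upsilon = \Psi_L\circ G$, that $G = H^{-1}\circ\pi$ is continuous, and that $\Psi_L + \Psi_R = 0$ (Theorem~\ref{Psicont}), so $\Upsilon(\us) = \Psi_L(G(\us)) = -\Psi_R(G(\us))$; having both expressions available is what makes the prefix/suffix bookkeeping symmetric.

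The core of the argument is combinatorial: given $\us\in\Delta_{\un}$ with $G(\us) = \Gamma = (u_0,a_0,v_0),(u_1,a_1,v_1),\dots$, the extended word-map relation from the proof of Lemma~\ref{form} and Theorem~\ref{mainmap} tells us that $\us$ (or its image in the appropriate $\sspec$ class, extended to a bi-infinite sequence) has the form $\dots\,\Lambda^{(1)}(u_1)\,u_0.a_0 v_0\,\Lambda^{(1)}(v_1)\dots$, i.e.\ the symbol $s_0 = a_0$ when $u_0 = \epsilon$, and more generally $s_0$ is the first letter of the suffix $u_0$ read left-to-right, or of $a_0 v_0\dots$ if $u_0 = \epsilon$. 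I would split into the two cases $u_0 \neq \epsilon$ and $u_0 = \epsilon$. When $u_0 \neq \epsilon$ (so $u_0 = 31^\ell$ and $s_0 = 3$ or $s_0 = 1$), applying $S$ simply deletes the first letter of $u_0$, producing a new path $\Gamma'$ that agrees with $\Gamma$ from level $1$ on and has first edge with prefix $u_0' = $ (the tail of $u_0$); then $\Psi_L(\Gamma') = \phi(u_0') + \sum_{j\ge1}\phi(\Lambda^{(j)}(u_j)) = \Psi_L(\Gamma) - \phi(s_0)$ — wait, sign: $\Psi_L$ is built from prefixes placed to the \emph{left} of the decimal point, and shifting moves $s_0$ from the suffix side to the prefix side, so the increment is $+\phi(s_0)$; I would pin down this sign carefully using the decimal-point convention in \eqref{seqmapdef} and the identity $\Psi_R = -\Psi_L$. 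When $u_0 = \epsilon$ we must "desubstitute enough to expose a letter": find the least $k$ with $u_k \neq \epsilon$ (such $k$ exists unless $\Gamma\in\spec_1\cup\spec_\hone$, which are handled separately by Lemma~\ref{specprop} and Remark~\ref{rem1}(b)), rewrite the inner block via Lemma~\ref{form}(b) as $S^j(\Lambda^{(k)}(\epsilon.a_k))$, and observe that $S$ on $\us$ corresponds to incrementing $j$ by $1$ inside $\Lambda^{(k)}(a_k)$; the resulting change in the Dumont–Thomas expansion rewrites the first $k$ edges into the unique path in $\cP^{(k)}(a_k)$ with shift index $j+1$ (this path exists and is unique by Theorem~\ref{assignment}(a)), and a telescoping computation of the difference of the two finite prefix-sums, using $\phi(\Lambda^{(i)}(a_i)) = \ell^{(i)}_{a_i}$ and $\ell^{(i)}_{a_i}\to 0$ from \eqref{two} and \eqref{four}, yields exactly $\phi(s_0)$.

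The boundary cases are where the main obstacle lies. When $\us$ is on the forward orbit of $\ut$ or $\uht$, or equals $\ut$ or $\uht$ themselves, $G$ is two-to-one and the "desubstitute to expose a letter" procedure can fail to terminate (these are precisely $\spec_1,\spec_\hone,\spec_2$); here I would argue directly. For $\us = S^j(\ut)$ with $j\ge0$, Remark~\ref{rem1}(b) gives $\Upsilon(\ut) = 0$ and $\Upsilon(S^{-1}(\ut)) = -\ell_1$; using $\ut = \lseq{\beta}.\rseq{\alpha}$ and the explicit form of $\Gamma_\beta = \Gamma_{1min}$ together with the telescoping series $-\ell_1 = \sum_{i\ge0}\ell^{(2i+1)}_3$ from the proof of Theorem~\ref{inequality}, one checks the shift formula by hand on this single orbit (and similarly on $o^+(\uht)$ and on $o(\rseq\alpha)$). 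Alternatively — and this is the cleaner route I would actually pursue — I would first prove the formula on the dense set $\Delta_{\un}\setminus Z$ where $G$ is injective and the generic argument applies with no exceptional behaviour, then invoke continuity: $\Upsilon = \Psi_L\circ G$ is continuous (composition of the continuous $G$ from Theorem~\ref{mainmap} with the continuous $\Psi_L$ from Theorem~\ref{Psicont}), $S$ is continuous, $\phi(s_0)$ depends only on the clopen cylinder $[s_0]$ hence is locally constant, and $\Delta_{\un}\setminus Z$ is dense (its complement is countable while $\Delta_{\un}$ is a Cantor set), so the identity $\Upsilon\circ S = \Upsilon + \phi(s_0)$ extends by density to all of $\Delta_{\un}$. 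This sidesteps the delicate orbit-by-orbit checks entirely; the only thing to verify carefully is that $s_0\mapsto\phi(s_0)$ is constant on each cylinder $[1],[2],[3]$, which is immediate since $\phi(s_0) = \Phi(\be_{s_0}) = \ell_{s_0}$ depends only on $s_0$.
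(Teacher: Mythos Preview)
Your core idea matches the paper's: both construct the path $\Gamma'$ corresponding to $S(\us)$ using Lemma~\ref{form}(b) and Theorem~\ref{assignment}(a), then observe $\Psi_L(\Gamma') - \Psi_L(\Gamma) = \phi(s_0)$. However, you have the prefix/suffix roles reversed. Since $\cW(\Gamma) = \dots u_0.a_0 v_0\dots$, the symbol $s_0$ is always $a_0$, never a letter of $u_0$; shifting moves $a_0$ into the prefix, so the easy case is $v_0 \neq \epsilon$ (not $u_0 \neq \epsilon$), and in general one needs room on the \emph{right}, i.e.\ some $v_i \neq \epsilon$ --- the paper phrases this as choosing $k$ with $j < |\Lambda^{(k)}(a_k)| - 1$. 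The only paths where no such $k$ exists are $\Gamma_\beta$ and $\Gamma_\hbeta$, i.e.\ $\us \in \{S^{-1}(\ut), S^{-1}(\uht)\}$, and the paper checks these two points directly via Remark~\ref{rem1}(b).

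Your density-and-continuity alternative is a genuine simplification over the paper's treatment of those boundary points: once the formula holds on the co-countable set where $G(\us)$ and $G(S(\us))$ both lie in $\cN$ (slightly smaller than your $\Dun \setminus Z$, since $S^{-1}(\ut), S^{-1}(\uht) \notin Z$ but are still exceptional), continuity of $\Upsilon$ and local constancy of $\us \mapsto \phi(s_0)$ extend the identity to all of $\Dun$ with no case analysis. The paper does not take this shortcut; it verifies the two exceptional points by hand.
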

\begin{proof}
We first prove the result for $\Upsilon$.
Examining the IPSA, there are no paths with $\cW(\Gamma)_R = \epsilon$
and exactly two paths with $\cW(\Gamma)_R = a \epsilon$ with $a\not=\epsilon$,
namely, $\Gamma_\beta$ and $\Gamma_\hbeta$, with 
$\cW(\Gamma_\beta)_R = 1\epsilon$ and 
$\cW(\Gamma_\hbeta)_R = 2\epsilon$. We consider 
$\Gamma_\beta$, the other case is similar. Letting
$\us = S^{-1}(\ut)$, by Remark~\ref{rem1}(b),    
 $\Upsilon(\us)  = -\ell_1 = -\phi(s_0)$. On the other hand,
 using Remark~\ref{rem1}(b) again 
since $S(\us) = \ut$ and $G(\ut) = \Gamma_\alpha$, we have 
$\Upsilon(S(\us))= 0 = \Upsilon(\us) + \phi(s_0)$,
proving the result in this case.

Now assume $\us\not= S^{-1}(\ut),  S^{-1}(\uht)$ and so
 $\Gamma = G(\us)$ satisfies $\cW(\Gamma)_i \not=
\epsilon$ for $i = 0,1$. Using Lemma~\ref{form}(b), this implies that 
by picking $k$ large enough we can ensure that 
$$\cW(\Gamma) = \dots \Lambda^{(k+1)}(u_{k+1})\, \Lambda^{(k)}(u_k)\,
S^j(\Lambda^{(k)}(\epsilon.a_k))\,  \Lambda^{(k)}(v_k)\, \Lambda^{(k+1)}(v_{k+1})\dots
$$ 
with $j < |\Lambda^{(k)}(a_k)| - 1$. Now by Lemma~\ref{form}(a), there exists
a length $(k+1)$-path $\gamma$ with $\cW(\gamma) = S^{j+1}(\Lambda^{(k)}(a_k))$.
So letting $\Gamma' = \gamma, \Gamma_{k+1}, \Gamma_{k+2}, \dots$
we have 
$$\cW(\Gamma') = \dots \Lambda^{(k+1)}(u_{k+1})\, \Lambda^{(k)}(u_k)\,
S^{j+1}(\Lambda^{(k)}(\epsilon.a_k))\,  \Lambda^{(k)}(v_k)\, \Lambda^{(k+1)}(v_{k+1})\dots
$$
Thus $\cW(\Gamma') = S(\cW(\Gamma))$ and 
$\Psi_L(\Gamma') = \Psi_L(\Gamma) + \phi(s_0)$. But using the definition of
$G$, $G(S(\us)) = \Gamma'$ and so
 $\Upsilon(S(\us)) = \Upsilon(\us) + \phi(s_0)$, as required.

Since by Remark~\ref{rem1}(a), $\Upsilon = \Upsilon^+ \circ \pi$ and
$\Dunp = \pi(\Dun)$, the result for $\Upsilon^+$ also follows.
\end{proof}

\begin{defin} $\ $
\begin{enumerate}[(a)]
\item Let $\uw = \cW(\Gamma_{2max})$ and $\uhw = \cW(\Gamma_{3max})$.

\item Endow $\Dunp$ with the  lexicographic order. 
Recall that a map $h:X\raw Y$ between two linearly ordered
spaces is \textit{weakly order reversing} if $x<y$ implies $h(x) \geq h(y)$
and \textit{strictly order reversing} if $x<y$ implies $h(x) > h(y)$.
\end{enumerate}
\end{defin}

There are two sources of non-injectivity for $\Upsilon =  \Psi_L\circ G$.
The first is a consequence of the noninjectivity of $\Psi_L$ at
the overlaps of the $\cR_a$ 
given in Theorem~\ref{inequality} (cases (2) and (3) in
Theorem~\ref{Upsilonthm} below).
The second  is a consequence of the noninjectivity of
$G$. This noninjectivity
 is the same as that of $\pi$ given in Theorem~\ref{onesided}(b) 
(case (1) in Theorem~\ref{Upsilonthm} below)
 because by Remark~\ref{onesidedrem}(b), 
$G = H^{-1}\circ \pi$ and $H$ is injective. 
\begin{thm}\label{Upsilonthm}$\ $
\begin{enumerate}[(a)]
\item The map $\Upsilon:\Dun \raw \R$ is continuous and  injective 
except for the following cases:
\begin{enumerate}[(1)]
\item $\Upsilon(S^j(\ut)) = \Upsilon(S^j(\uht))$ for $j\geq 0$.
\item  $\Upsilon(S^j(\ut)) = \Upsilon(S^{j+1}(\uw))$ for $j< 0$.
\item  $\Upsilon(S^j(\uht)) = \Upsilon(S^{j+1}(\uhw))$ for $j< 0$.
\end{enumerate}
\item The map $\Upsilon^+:\Dunp \raw \R$ is  
 continuous and weakly
order reversing. It is strictly order reversing 
except for cases (2) and (3) above, or more precisely, 
$\Upsilon^+(\pi\circ S^j(\ut)) = \Upsilon^+(\pi\circ S^{j+1}(\uw))$ for $j< 0$
and 
$\Upsilon^+(\pi\circ S^j(\uht)) = \Upsilon^+(\pi\circ S^{j+1}(\uhw))$ 
for $j< 0$.
\end{enumerate}
\end{thm}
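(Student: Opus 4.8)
The plan is to derive both parts from the factorizations $\Upsilon=\Psi_L\circ G$ and $\Upsilon^+=\Psi_L\circ H^{-1}$ together with the relation $\Upsilon=\Upsilon^+\circ\pi$ of Remark~\ref{rem1}(a). Continuity is immediate: $\Psi_L$ is continuous by Theorem~\ref{Psicont}, $G$ by Theorem~\ref{mainmap}, and $H^{-1}$ because $H$ is a homeomorphism by Theorem~\ref{onesided}(a). I would prove part (b) first and then read off part (a).

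For part (b), suppose $\rseq{s}<\rseq{s}'$ in $\Dunp$ and let $k\ge 0$ be the first coordinate at which they differ, so $s_k<s_k'$. Since the two sequences agree before coordinate $k$, iterating Lemma~\ref{shift} gives $\Upsilon^+(\rseq{s})-\Upsilon^+(\rseq{s}')=\Upsilon^+(S^k\rseq{s})-\Upsilon^+(S^k\rseq{s}')$. Now $S^k\rseq{s}$ begins with the symbol $s_k$, and from the definition of $H$ the image $H(\cP[a])$ consists of sequences beginning with $a$; hence $\Upsilon^+(S^k\rseq{s})\in\Psi_L(\cP[s_k])=\cR_{s_k}$ and likewise $\Upsilon^+(S^k\rseq{s}')\in\cR_{s_k'}$. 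By Theorem~\ref{inequality} the pieces $\cR_3,\cR_2,\cR_1$ lie in the intervals $[0,-\ell_2]$, $[-\ell_2,-\ell_1]$, $[-\ell_1,\ell_3-\ell_2]$, which are arranged oppositely to the order of the symbols; since $s_k<s_k'$ this already yields $\Upsilon^+(S^k\rseq{s})\ge\Upsilon^+(S^k\rseq{s}')$, so $\Upsilon^+$ is weakly order reversing. Equality is possible only when both values equal a common endpoint of two adjacent pieces, necessarily $-\ell_2$ (with $(s_k,s_k')=(2,3)$) or $-\ell_1$ (with $(s_k,s_k')=(1,2)$), the intervals for $\cR_3$ and $\cR_1$ being disjoint; and then the last clause of Theorem~\ref{inequality} forces $H^{-1}(S^k\rseq{s})$ and $H^{-1}(S^k\rseq{s}')$ to be $\Gamma_{2min},\Gamma_{3max}$ or $\Gamma_{1min},\Gamma_{2max}$ respectively. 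Pushing these through $H$ and the maps $F_i$ of Theorem~\ref{mainmap} identifies the pair, when $k=0$, as $\{\pi(S^{-1}\ut),\pi(\uw)\}$ or $\{\pi(S^{-1}\uht),\pi(\uhw)\}$, i.e. the $j=-1$ instances of cases (2), (3).

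To reach all $j<0$ I would bootstrap from $j=-1$ with Lemma~\ref{shift}: the equality $\Upsilon^+(\pi\circ S^{j}\ut)=\Upsilon^+(\pi\circ S^{j+1}\uw)$ for all $j<0$ follows from the $j=-1$ case once one checks that $\uw_n=\beta_{n-1}$ for every $n\le -1$, so that at each un-shift the two sides pick up the same symbol. This identity says $\uw=\cW(\Gamma_{2max})$ and $\cW(\Gamma_{1min})=S^{-1}(\lseq{\beta}.\epsilon)$ have the same left half, which I would verify directly by writing out the prefix labels along $\Gamma_{2max}$ and $\Gamma_{1min}$ and using the trivial identity $\Lambda^{(2i)}(3\,1^{n_{2i+1}})=\Lambda^{(2i+1)}(3)$; both left halves then equal $\dotsb\Lambda^{(5)}(3)\,\Lambda^{(3)}(3)\,\Lambda^{(1)}(3)$. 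The same computation handles $\uhw$ and $\cW(\Gamma_{2min})$. Conversely, the equality analysis above shows every non-strict pair for $\Upsilon^+$ has the form $\{w\cdot\pi(S^{-1}\ut),\,w\cdot\pi(\uw)\}$ for a common prefix $w$; the uniqueness of left extensions in $\Dunp$ (Theorem~\ref{onesided}(c)) forces $w$ to be an initial block of $\lseq{\beta}$, so the pair is exactly $\{\pi(S^{j}\ut),\pi(S^{j+1}\uw)\}$ for the matching $j<0$, and dually for $\uht,\uhw$. This proves (b).

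Finally, (a) follows by combining this with the non-injectivity of $\pi$. If $\Upsilon(\us)=\Upsilon(\us')$ with $\us\ne\us'$, then $\Upsilon^+(\pi\us)=\Upsilon^+(\pi\us')$: either $\pi\us=\pi\us'$, whence $\{\us,\us'\}=\{S^j\ut,S^j\uht\}$ with $j\ge 0$ by Theorem~\ref{onesided}(b) (case (1)); or $\pi\us\ne\pi\us'$, whence $\{\pi\us,\pi\us'\}$ is one of the exceptional pairs of part (b), and since $\pi$ is injective on the backward orbits of $\ut,\uht$ and on the full orbits of $\uw,\uhw$ — each lies outside $o^+(\ut)\cup o^+(\uht)$, as one sees by applying $G$ and Theorem~\ref{mainmap} — this lifts uniquely to case (2) or (3). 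The one genuinely fiddly point I expect is this last identification, namely checking that nothing else sits over the exceptional points of the Rauzy fractal, which is exactly where Theorem~\ref{onesided} and the explicit identity $\uw_n=\beta_{n-1}$ enter; no single hard estimate is involved.
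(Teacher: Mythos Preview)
Your proof is correct and follows the same strategy as the paper: continuity from the factorizations, then part (b) via Theorem~\ref{inequality} and Lemma~\ref{shift}, then part (a) from (b) using $\Upsilon=\Upsilon^+\circ\pi$ and Theorem~\ref{onesided}(b). You actually go further than the paper's proof by explicitly verifying the left-half identity $w_n=\beta_{n-1}$ (and its hat analogue) needed to confirm that the equalities in cases (2) and (3) hold for every $j<0$, a step the paper leaves implicit.
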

\begin{proof}
The continuity of $\Psi_L$ is proved in Theorem~\ref{Psicont},  
 that of $G$ in Theorem~\ref{mainmap} and that of $H$ in 
Theorem~\ref{onesided}, thus the continuity of
$\Upsilon$ and $\Upsilon^+$  follows. 

We prove the rest of (b) first. Since $\Dunp = \pi(\Dun)$, each
sequence in $\Dunp$ can be expressed as $\pi(\us)$ for some
$\us\in\Dun$. So 
assume that $\pi(\us) \not= \pi(\us')$ with $\us,\us'\in\Dun$ 
 and let $k\geq 0$ be the
smallest index with $s_k \not = s_k'$ and assume $s_k > s_k'$. Thus
 $\Upsilon(S^k(\us)) \in \cR_{s_k}$ and 
$\Upsilon(S^k(\us')) \in \cR_{s_k'}$. Therefore by Lemma~\ref{inequality},
 $\Upsilon(S^k(\us)) \leq  \Upsilon(S^k(\us'))$ with equality only
when  $S^k(\us) = \uw$ and $S^k(\us') = \ut$ or
$S^k(\us) = \uhw$ and $S^k(\us') = \uht$. Now if 
$\Upsilon(S^k(\us)) <  \Upsilon(S^k(\us'))$, using Lemma~\ref{shift}
 this implies that 
$\Upsilon(\us) + \phi(s_0 s_1 \dots s_{k-1}) < 
\Upsilon(\us') + \phi(s_0' s_1' \dots s_{k-1}')$.
But by assumption $s_i = s_i'$ for $i = 0,\dots, k-1$, and
so $\Upsilon(\us) < \Upsilon(\us')$ as required. 

Remark~\ref{rem1}(a) says that 
$\Upsilon =  \Upsilon^+\circ \pi$ and so 
Theorem~\ref{onesided}(b) implies that 
the only noninjectivity 
that $\Upsilon$ possesses in addition to that of $\Upsilon^+$ comes
from case (1), finishing the proof of (a). 
\end{proof}

\begin{rem}\label{values}
Using Remark~\ref{rem1}(b) and Theorem~\ref{Upsilonthm} we have
 $\Upsilon\Inv(0) = \{\ut, \uht\},
\Upsilon\Inv(-\ell_1) = \{S\Inv(\ut), \uw\}$, and
$\Upsilon\Inv(-\ell_2) = \{S\Inv(\uht), \uhw\}$.
\end{rem}

Theorem~\ref{Upsilonthm} gives us detailed information on the
geometric representation maps $\Upsilon$ and $\Upsilon^+$
 defined from the infimax minimal
sets into the reals. The next step is to connect these maps
to the dynamics on their domain and range.

Lemma~\ref{shift} tells us the required dynamics on the image
of $\Upsilon$ in order  to mirror the shift on $\Dun$. Specifically,
if $[a]\subset \Dun$ is the 
cylinder set $\{ \us\in\Dun\colon s_0 = a\}$, then
$\cP[a] = G([a])$ with $G$ defined in Theorem~\ref{mainmap}
and so each component of the Rauzy fractal $\cR_a$ is
$\cR_a = \Upsilon([a])$ for $a=1,2,3$. Thus by Lemma~\ref{shift},
the shift map on $\Dun$ corresponds to translation by $\phi(a) = \ell_a$
on $\cR_a$.

However, recall that the $\cR_a$ are not disjoint, rather by 
Lemma~\ref{inequality}, $\cR_3\cap \cR_2 = \{-\ell_2\}$ and
$\cR_2\cap \cR_1 = \{-\ell_1\}$. Thus to define the translation
we remove the overlap points and let $\cR' = \cR\setminus\{-\ell_2, \ell_1\}$
and $T:\cR'\raw \cR$ is defined via
\begin{equation*}
T(x) = x + \ell_a\ \text{when} \ x\in\cR_a.
\end{equation*}
We also remove the corresponding points from $\Dun$ and 
let $\Delta' = \Dun \setminus (\Upsilon\Inv(-\ell_2) \cup
\Upsilon\Inv(-\ell_1))$ and so $\cR' = \Upsilon(\Delta')$.
Thus   $\Upsilon\circ S = T \circ \Upsilon$ 
restricted to $\Delta'$.

To get a full dynamical conjugacy we have to be able to
iterate this relation which requires excluding the full
orbits of $\Upsilon\Inv(-\ell_2)$ and $\Upsilon\Inv(-\ell_1))$. 
Thus we let $\Delta'' = 
\Dun\setminus(o(\ut, S)\cup o(\uht, S)\cup o(\uw, S)\cup o(\uhw, S))$ and
 $\cR'' = \Upsilon(\Delta'')$. Now note that 
$S(\Delta'') = \Delta''$ and $T(\cR'') = \cR''$ and so 
we have a full conjugacy between $\Delta''$ under the shift $S$
to $\cR''$ under the translations $T$. Finally, using
Theorem~\ref{Upsilonthm}(b) we can see that $\Upsilon^+$ now gives
a strictly order reversing conjugacy from $\pi(\Delta'')$ to
$\cR''$. Thus we have proved:
\begin{cor}\label{cong1} $\ $
\begin{enumerate}[(a)]
\item Restricted to $\Delta'$ we have $\Upsilon\circ S = T \circ \Upsilon$.
\item  $\Upsilon$ restricted to $\Delta''$ gives a
topological conjugacy from $(\Delta'', S)$ to $(\cR'', T)$.
\item $\Upsilon^+$ restricted to $\pi(\Delta'')$ gives a
strictly order reversing 
topological conjugacy from $(\pi(\Delta''), S)$ to $(\cR'', T)$.
\end{enumerate}
\end{cor}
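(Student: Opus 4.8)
The plan is to collect the three assertions from the discussion preceding the corollary, reducing the proof to a short bookkeeping check. Write $E=o(\ut,S)\cup o(\uht,S)\cup o(\uw,S)\cup o(\uhw,S)$, so that $\Delta''=\Dun\setminus E$, and recall from that discussion that $\Upsilon=\Psi_L\circ G$, that $\Upsilon([a])=\Psi_L(\cP[a])=\cR_a$ because $G([a])=\cP[a]$, and that $\phi(a)=\ell_a$. The first fact I would record is $\Delta''\subseteq\Delta'$: by Remark~\ref{values}, $\Upsilon\Inv(-\ell_1)=\{S\Inv(\ut),\uw\}$ and $\Upsilon\Inv(-\ell_2)=\{S\Inv(\uht),\uhw\}$, and each of these four points lies in $E$. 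For part (a): by Lemma~\ref{shift}, $\Upsilon(S(\us))=\Upsilon(\us)+\ell_{s_0}$ for every $\us\in\Dun$; if $\us\in\Delta'$ with $s_0=a$ then $\Upsilon(\us)\in\cR_a$, and since by Theorem~\ref{inequality} the only overlaps are $\cR_3\cap\cR_2=\{-\ell_2\}$ and $\cR_2\cap\cR_1=\{-\ell_1\}$, both excluded from $\cR'$, the map $T$ is single-valued at $\Upsilon(\us)$ and there equals translation by $\ell_a$; hence $T(\Upsilon(\us))=\Upsilon(\us)+\ell_a=\Upsilon(S(\us))$.

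For part (b): since $S$ is a homeomorphism of $\Dun$ and $E$ is a union of full $S$-orbits, $S(\Delta'')=\Delta''$, and together with (a) and $\Delta''\subseteq\Delta'$ this gives $T(\cR'')=\cR''$ for $\cR''=\Upsilon(\Delta'')$. Injectivity of $\Upsilon$ on $\Delta''$ follows from Theorem~\ref{Upsilonthm}(a): in each of its three exceptional coincidences both points lie in $E$, hence in no case do they lie in $\Delta''$; the same characterization shows that every $\Upsilon$-preimage of a point of $\cR''$ lies in $\Delta''$. So $\Upsilon|_{\Delta''}$ is a continuous bijection onto $\cR''$, and its inverse is continuous by the standard properness argument: if $x_n\to x$ in $\cR''$ with $x_n=\Upsilon(\us_n)$, $x=\Upsilon(\us)$ and $\us_n,\us\in\Delta''$, any subsequential limit $\us^{\ast}$ of $\{\us_n\}$ in the compact space $\Dun$ satisfies $\Upsilon(\us^{\ast})=x=\Upsilon(\us)$, so $\us^{\ast}\in\Delta''$ and $\us^{\ast}=\us$ by injectivity, whence $\us_n\to\us$. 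Thus $\Upsilon|_{\Delta''}$ is a homeomorphism conjugating $S$ with $T$, which is (b).

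For part (c): Remark~\ref{rem1}(a) gives $\Upsilon=\Upsilon^+\circ\pi$, and Theorem~\ref{onesided}(b) implies $\pi\Inv(\pi(\us))\subseteq E$ whenever $\us\in E$; hence $\pi(E)\cap\pi(\Delta'')=\emptyset$, so $\pi|_{\Delta''}$ is injective and, by the same properness argument, a homeomorphism onto $\pi(\Delta'')$ with $\pi\circ S=S\circ\pi$. Consequently $\Upsilon^+|_{\pi(\Delta'')}=\Upsilon|_{\Delta''}\circ(\pi|_{\Delta''})\Inv$ is a homeomorphism onto $\cR''$ conjugating $S$ with $T$. Finally, by Theorem~\ref{Upsilonthm}(b), $\Upsilon^+$ is weakly order reversing on $\Dunp$ and fails to be strictly order reversing only on the pairs $\{\pi S^j(\ut),\pi S^{j+1}(\uw)\}$ and $\{\pi S^j(\uht),\pi S^{j+1}(\uhw)\}$ for $j<0$; all the sequences involved lie in $E$, so none of these pairs meets $\pi(\Delta'')$, and $\Upsilon^+|_{\pi(\Delta'')}$ is strictly order reversing. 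This gives (c).

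I do not expect any substantive obstacle. The only work is the bookkeeping that each exceptional point appearing in Remark~\ref{values} and in Theorems~\ref{onesided}(b) and \ref{Upsilonthm} indeed lies in $E$ — which is immediate from the explicit descriptions of those points — together with the routine properness argument, which is needed because $\Delta''$ and $\pi(\Delta'')$ are not compact.
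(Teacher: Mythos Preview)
Your proposal is correct and follows essentially the same route as the paper, which derives the corollary directly from the discussion preceding it; you have simply filled in details the paper leaves implicit, most notably the properness argument for continuity of the inverse on the non-compact set $\Delta''$ and the explicit verification that all exceptional points lie in $E$.
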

The last stage of the geometric representation process in the next
section embeds $\cR''$ densely in the attractor of
a specific class of interval maps.

\section{The geometric representation as the attractor of an ITM}
We now define the two-parameter family of ITM which can
occur in the geometric representations of the infimax S-adic family.
Define $T_\mot:I\raw I$ with $I = [0,1]$ and  
$0 < \mu_1 < \mu_2 < 1$ by
\begin{align*}
T_{\mu_1, \mu_2}(x) &= x + 1 - \mu_1 \ &\text{for}\ x\in [0, \mu_1)\\
T_{\mu_1, \mu_2}(x) &= x - \mu_1 \ \ \ \ &\text{for}\ x\in [\mu_1, \mu_2)\\
T_{\mu_1, \mu_2}(x) &= x  - \mu_2 \ \ \ &\text{for}\ x\in [\mu_2, 1].
\end{align*}
See Figure 2. This family is topologically
conjugate to the one studied by Bruin and Troubetzkoy in \cite{BT}.
Specifically, using the conjugacy $x\mapsto 1-x$ their map $T_{\alpha, \beta}$ 
is conjugate to $T_\mot$  with $\beta = \mu_1$ and $\alpha = \mu_2$.
 The dynamical object of
interest is the \textit{attractor} 
$$\Omega_{\mu_1, \mu_2} = \bigcap_{i=0}^\infty T^i_\mot(I).$$
\begin{figure}[t]\label{fig2}
\centering
\includegraphics[width=0.5\textwidth]{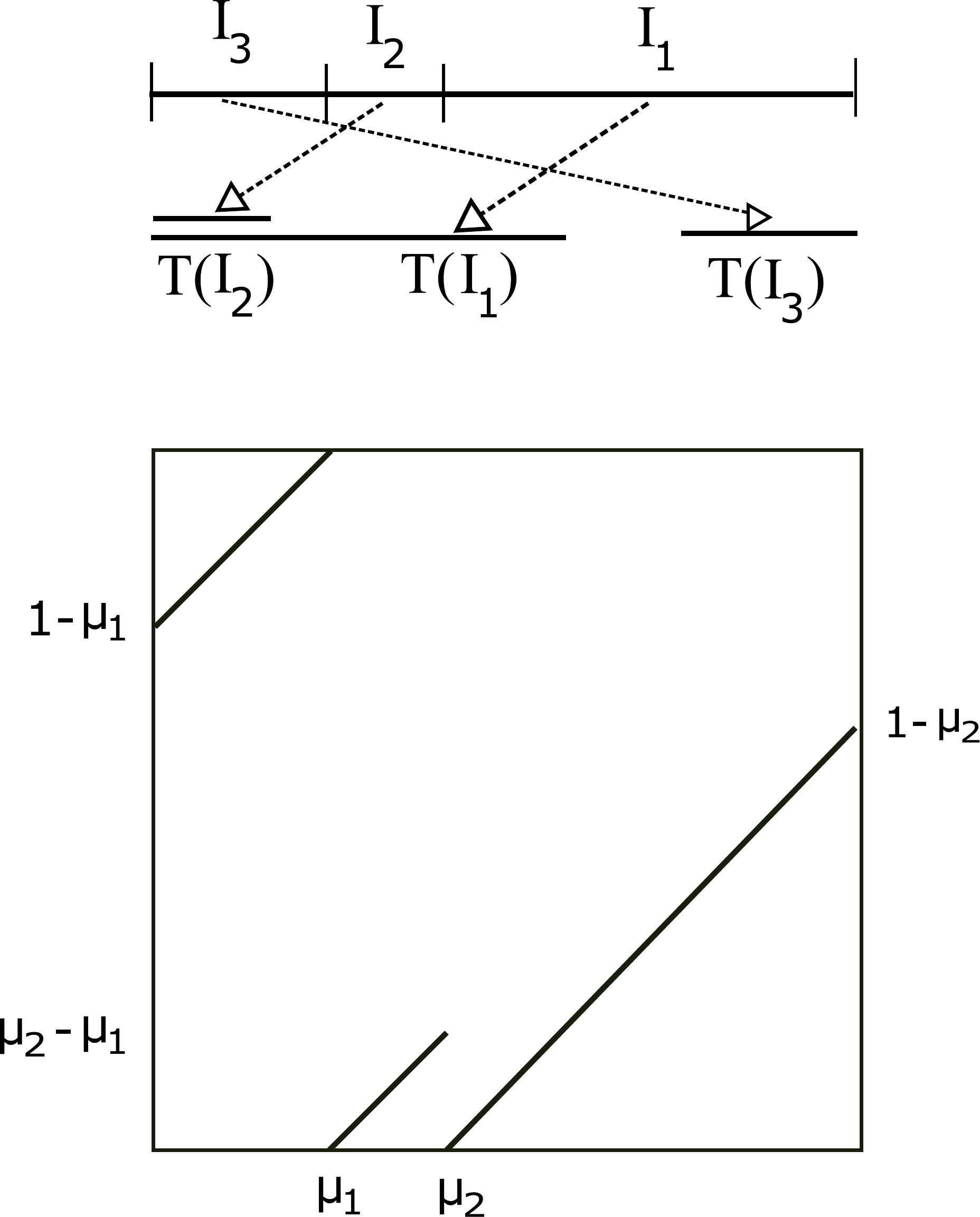}
\caption{Schematic diagram and graph of an ITM in the representing family}
\end{figure}

The maps $T_\mot$ are not continuous, but there is a standard
method to construct a continuous  extension going back to Keane's
work on IET (\cite{Keane}).  The extension is
simplest to describe if we restrict to the case of interest here. So
we assume that $\Tmot$ is such that for all $n>0$,
$\Tmot^n(\mu_1), \Tmot^n(\mu_2)$, and 
$\Tmot^n(0)$ are not equal to either $\mu_1$ nor $\mu_2$
(the importance of zero here is that $0 = 
\lim_{x\raw \mu_1^+} \Tmot(x) = \lim_{x\raw \mu_2^+} \Tmot(x)$).

Informally the  extension is accomplished by
 splitting the backward orbits of 
$\mu_1$ and $\mu_2$ into a pair of orbits. More formally, the
extension is built by extending the linear order
on $I$ to a disjoint union of $I$ and a second copy
of the backward orbits of $\mu_1$ and $\mu_2$.
Let 
$$
\tI = I \sqcup (\bigcup_{j=1,2}\bigcup_{i =0}^\infty \Tmot^{-i}(\mu_j)).
$$
Thus $\tI$ contains two copies of any 
$y\in \cup_{j=1,2}\cup_{i =0}^\infty \Tmot^{-i}(\mu_j)$. Denote the 
version in $I$ itself as $y$ and its copy as $\ty$. Extend the
usual linear order on $I$ to one on $\tI$ by declaring that
$y < \ty$. 

Endowing  $\tI$ with the order topology  makes it a compact metric
space. The map $\Tmot$ naturally extends to a continuous $\tTmot:\tI\raw\tI$
with $\tTmot(\tilde{\mu_1}) =  \tTmot(\tilde{\mu_2}) = 0$ 
and if $y\in I$ has $\Tmot^n(y) = \mu_i$ with $n>0$, then
$\Tmot^n(\ty) = \tilde{\mu_i}$. Define the \textit{extended attractor}
\begin{equation*}
\tOmega_\mot = \bigcap_{i=0}^\infty \tTmot^i(\tI).
\end{equation*}

Given a $\un\in \what$, define $T_\un = \Tmot$ with
$\mu_1 = -\ell_2/(\ell_3 - \ell_2)$ and 
$\mu_2 = -\ell_1/(\ell_3 - \ell_2)$  and let $\tT_\un = \tT_\mot$
with $\vell = (\ell_1, \ell_2, \ell_3)$ as in
Theorem~\ref{mainconv}
\begin{rem}
When $\un = n^\infty$,  Remark~\ref{singsub} yields that the parameters
of the corresponding ITM are $\mu_1 = \lambda_1^2$ and 
$\mu_2 = \lambda_1$ with $\lambda_1$ the stable eigenvalue
of the Abelianization matrix $A_n$ \eqref{subabel}.
\end{rem}
We now rescale the geometric representation maps $\Upsilon$ and
$\Upsilon^+$ so that their image is $I$ and then extend
them so their range is in $\tI$.
The map
$\Upsilon_1^+:\Delta\raw I$ is defined 
as $\Upsilon_1^+ = \Upsilon^+/(\ell_3 - \ell_2)$, while
the extended range map $\tUpsilon^+:\Delta\raw \tI$ is defined
by $\tUpsilon^+ = \Upsilon_1^+$ except for the preimages of
the new points $\ty\in \tI$. To deal with  this case,
 note that using Theorem~\ref{Upsilonthm}
 for $j>0$,
$T_\un^{j-1}\Upsilon^+ \pi S^{-j}(\uht) = T_\un^{j-1}\Upsilon^+ \pi S^{-j+1}(\uhw)
= \mu_1$. So when $y =\Upsilon^+ \pi S^{-j}(\uht)$ let 
$\tUpsilon^+ \pi S^{-j}(\uht) = y$ and $\tUpsilon^+ \pi S^{-j+1}(\uhw) = \ty$.
Similarly,  when $y =\Upsilon^+ \pi S^{-j}(\ut)$ let 
$\tUpsilon^+ \pi S^{-j}(\ut) = y$ and $\tUpsilon^+ \pi S^{-j+1}(\uw) = \ty$.
Finally, let $\tUpsilon = \tUpsilon^+\circ\pi$.

The extension of $\Upsilon$ and its range not only allows us
to extend $T$ to the continuous $\tT$, it also eliminates the noninjectivity
of $\Upsilon$ given in (2) and (3) in Theorem~\ref{Upsilonthm}(a).
Since  the noninjectivity in (1) is eliminated by the passage to
$\tUpsilon^+$ defined on $\Dunp$ we have that $\tUpsilon^+$ is
a homeomorphism onto its image which we show to be $\tOmega_\un$.
Using the commutivity from Corollary~\ref{cong1} we see that
$\tUpsilon^+$ is a topological conjugacy.

There are three intervals in the order topology on $\tI$ that will be used
to code the dynamics on the extended attractor. These are 
\begin{equation}\label{code}
\tI_3 = [0,\mu_1], \tI_2 = [\tilde{\mu}_1, \mu_2],\ \text{and}\ 
\tI_3 = [\tilde{\mu}_2, 1].
\end{equation}
For a point $x\in \tOmega_\un$ its itinerary under $\tT$
with respect to the partition
$\tI_1, \tI_2$ and $\tI_3$ is  $s_0 s_1 \dots$ 
with $s_i = a$ if and only if $\tT^i(x)\in \tI_a$.

The next theorem formalizes the statement in Theorem~\ref{main}
of the introduction: the map $\tUpsilon^+$ provides a conjugacy
of the infimax minimal set $\Dunp$
to the attractor of an extended ITM and further, the partition
just given codes the dynamics exactly as in the symbolic minimal set.

\begin{thm}\label{maintext} $\ $
\begin{enumerate}[(a)]
\item $\tUpsilon_\un^+: (\Dunp, S)\raw (\tOmega_\un, \tT_\un)$ is
a strictly order reversing topological conjugacy and 
for each $\us\in\Dunp $ the itinerary under 
$\tT_\un$ of $\tUpsilon_\un^+(\us)$ with respect to 
the partition $\tI_1, \tI_2$ and $\tI_3$ is $\us$.
\item $\tUpsilon_\un: (\Dun, S)\raw (\tOmega_\un, \tT_\un)$ is
a topological semiconjugacy with its only noninjectivity being
that $\tUpsilon_\un^{-1}(\tT_\un^j(0)) =  \{S^j(\ut), S^j(\uht) \}$
for $j\geq 0$.
\end{enumerate}
\end{thm}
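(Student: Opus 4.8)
The plan is to assemble the structural facts already in hand — the translation conjugacy of Corollary~\ref{cong1}, the (non)injectivity analysis of Theorem~\ref{Upsilonthm}, and the definition of $\tUpsilon_\un^+$ — and then to identify the image with the extended attractor. First I would record the effect of the rescaling $\Upsilon_1^+=\Upsilon^+/(\ell_3-\ell_2)$: dividing the chain of inequalities in Theorem~\ref{inequality} by $\ell_3-\ell_2>0$ and using $\mu_1=-\ell_2/(\ell_3-\ell_2)$, $\mu_2=-\ell_1/(\ell_3-\ell_2)$ together with the signs $\ell_1<\ell_2<0<\ell_3$ from Lemma~\ref{signs} shows that $\Upsilon_1^+$ carries $\Dunp$ into $[0,1]$ with $\Upsilon_1^+([3])\subseteq[0,\mu_1]$, $\Upsilon_1^+([2])\subseteq[\mu_1,\mu_2]$, $\Upsilon_1^+([1])\subseteq[\mu_2,1]$. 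A one-line computation ($\ell_3/(\ell_3-\ell_2)=1-\mu_1$, $\ell_2/(\ell_3-\ell_2)=-\mu_1$, $\ell_1/(\ell_3-\ell_2)=-\mu_2$) then shows that, after rescaling, the translation $T$ of Corollary~\ref{cong1} \emph{is} $T_\un$, so Corollary~\ref{cong1}(a) reads $\Upsilon_1^+\circ S=T_\un\circ\Upsilon_1^+$ off the two overlap fibres. I would also check here that $\mu_1,\mu_2$ satisfy the genericity hypothesis used to build the Keane-type extension (the forward $T_\un$-orbits of $0,\mu_1,\mu_2$ avoiding $\{\mu_1,\mu_2\}$): by Remark~\ref{values} such a collision would force a coincidence among the four $S$-orbits of $\ut,\uht,\uw,\uhw$, and these are disjoint since $\Dun$ is aperiodic (Lemma~\ref{minset}).

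The next step is that $\tUpsilon_\un^+$ is a homeomorphism onto its image and conjugates $S$ to $\tT_\un$. Injectivity is bookkeeping: by Theorem~\ref{Upsilonthm}(b) the only order-collapses of $\Upsilon_1^+$ on $\Dunp$ are the pairs in cases (2) and (3), and iterating the relation above backwards from Remark~\ref{values} identifies the common values as exactly the backward $T_\un$-orbit points of $\mu_1$ and $\mu_2$ — the points that get doubled in $\tI$ — so the extension rule (sending the two members of each such pair to the two copies $y<\ty$) makes $\tUpsilon_\un^+$ injective and also arranges $\tUpsilon_\un^+([a])\subseteq\tI_a$ for $a=1,2,3$. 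For continuity I would go through the path space: $\Upsilon_1^+\circ H=\Psi_L/(\ell_3-\ell_2)$ is continuous on $\cP$ by Theorem~\ref{Psicont}, and $\tUpsilon_\un^+\circ H$ differs from it only at the doubled values; if $\Gamma^{(m)}\to\Gamma$ in $\cP$ with $\Psi_L(\Gamma)$ a doubled value, then $\Gamma$ is one of the two preimage paths, lying in $\cP[a]$ and in $\cP[a']$ for the two symbols whose pieces $\cR_a,\cR_{a'}$ overlap there, so for large $m$ the first edge of $\Gamma^{(m)}$ forces it into the same $\cP[a]$, whence $\Psi_L(\Gamma^{(m)})$ tends to the doubled value from the side ($\cR_a$ from below, $\cR_{a'}$ from above) that matches the chosen copy in the order topology of $\tI$ — the doubling is designed for exactly this. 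Combined with the limit computations already carried out in the proof of Theorem~\ref{mainmap} this gives continuity of $\tUpsilon_\un^+\circ H$, hence of $\tUpsilon_\un^+$; being a continuous injection out of the compact $\Dunp$, it is a homeomorphism onto its image, and by the first paragraph it intertwines $S$ with $T_\un$ — and with $\tT_\un$ once one has passed to the extension.

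It remains to show the image $\widetilde{\cR}:=\tUpsilon_\un^+(\Dunp)$ equals $\tOmega_\un$, and this is the main obstacle. Since $S(\Dunp)=\Dunp$ by minimality we get $\tT_\un(\widetilde{\cR})=\widetilde{\cR}$, hence $\widetilde{\cR}=\tT_\un^n(\widetilde{\cR})\subseteq\tT_\un^n(\tI)$ for every $n$ and so $\widetilde{\cR}\subseteq\tOmega_\un$. For the reverse inclusion I would use that the partition endpoints $0,\mu_1,\tilde\mu_1,\mu_2,\tilde\mu_2,1$ all lie in $\widetilde{\cR}$ (they are $\tUpsilon_\un^+$-images of sequences of $\Dunp$, by Remarks~\ref{rem1} and \ref{values} and Theorem~\ref{inequality}) and $\tT_\un(\widetilde{\cR})=\widetilde{\cR}$; since on each $\tI_a$ the map $\tT_\un$ is a homeomorphism onto an interval, each $\tT_\un^n(\tI)$ is a finite union of intervals with all endpoints in $\widetilde{\cR}$, so a point of $\tOmega_\un\setminus\widetilde{\cR}$ would force a whole complementary gap of the Cantor set $\widetilde{\cR}$ to lie inside $\tOmega_\un$; thus the inclusion $\tOmega_\un\subseteq\widetilde{\cR}$ is equivalent to $\tOmega_\un$ having empty interior. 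I would deduce this last fact from the graph-directed self-similar description of $\cR$ that the IPSA provides — the first-edge splitting $\cP[a]=\bigsqcup\{(u_0,a,v_0)\}\times\cP_{\un'}[b]$ with $\un'=n_2n_3\dots$ expresses $\cR_a$ as a finite union of affine images of pieces of the $\un'$-fractal and pins down the gap structure closely enough to follow each gap forward under $\tT_\un$ and see it driven out of some $\tT_\un^n(\tI)$; alternatively one may transport the identification of $\Dun$ with an ITM attractor in Bruin--Troubetzkoy \cite{BT} along the conjugacy. Either way $\tOmega_\un=\widetilde{\cR}$.

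Finally I would dispatch the order, coding, and part (b). Strict order reversal of $\tUpsilon_\un^+$ follows from the weak order reversal of Theorem~\ref{Upsilonthm}(b) once its only order-collapses (cases (2),(3)) have been split into the correctly ordered copies $y<\ty$. The itinerary assertion is then immediate: $\tUpsilon_\un^+(\us)\in\tI_{s_0}$ by the first paragraph, and iterating $\tUpsilon_\un^+\circ S=\tT_\un\circ\tUpsilon_\un^+$ (Lemma~\ref{shift}) gives $\tT_\un^{\,i}(\tUpsilon_\un^+(\us))\in\tI_{s_i}$ for all $i$, so the $\tT_\un$-itinerary of $\tUpsilon_\un^+(\us)$ is $\us$; together with Corollary~\ref{cong1} this is the conjugacy of part (a), i.e.\ the statement of Theorem~\ref{main}. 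For (b), $\tUpsilon_\un=\tUpsilon_\un^+\circ\pi$ by definition and $\tUpsilon_\un^+$ is injective, so the noninjectivity of $\tUpsilon_\un$ is precisely that of $\pi$, which by Theorem~\ref{onesided}(b) is the collapse $\pi(S^j(\ut))=\pi(S^j(\uht))$ for $j\geq0$; since $\tUpsilon_\un(\ut)=\tUpsilon_\un(\uht)=0$ (Remark~\ref{rem1}(b), and $0$ is not a doubled point by the genericity recorded above) and $\tUpsilon_\un\circ S=\tT_\un\circ\tUpsilon_\un$, this fibre is $\tUpsilon_\un^{-1}(\tT_\un^{\,j}(0))=\{S^j(\ut),S^j(\uht)\}$, as claimed. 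The delicate points throughout are the empty-interior/gap analysis identifying $\widetilde{\cR}$ with $\tOmega_\un$, which is the real obstacle, and the matching of approach sides at the doubled points in the order topology of $\tI$, which is largely a careful reading of the limit arguments already present in the proof of Theorem~\ref{mainmap}.
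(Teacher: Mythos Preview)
Your overall architecture is correct and closely parallels the paper's: rescale, verify the intertwining with $T_\un$, resolve the noninjectivity of cases (2) and (3) by passing to $\tI$, and then identify the image with the attractor. Your treatment of continuity, the genericity check for the Keane extension, and part~(b) are in fact more thorough than the paper's, which largely asserts that the doubling is ``exactly designed'' to restore continuity and strict order reversal.

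The substantive difference is at the step you correctly flag as the obstacle, namely $\widetilde{\cR}=\tOmega_\un$. The paper does not touch the gap structure at all. Instead it argues: $\widetilde{\cR}$ is the continuous image of a minimal system under a semiconjugacy, hence is itself a $\tT_\un$--minimal set contained in $\tOmega_\un$. Since $\Dunp$ is aperiodic, Proposition~3 of \cite{BT} (if an ITM has a periodic orbit then the attractor consists of periodic orbits) forces $\tT_\un$ to be aperiodic; then Theorem~2.4 of \cite{ST} says an aperiodic ITM has exactly one minimal set, which coincides with its attractor, so $\widetilde{\cR}=\tOmega_\un$. This is two citations and one sentence.

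By contrast, your route~(a) --- empty interior via the graph-directed self-similarity from the IPSA --- is plausible but you leave it as a sketch; turning ``pins down the gap structure closely enough'' into an actual proof that every gap is eventually expelled from some $\tT_\un^n(\tI)$ is real work and is not done here. Your route~(b) --- transport the identification from \cite{BT} --- does not do what you need: as noted in the introduction, \cite{BT} establishes an isomorphism but not a homeomorphic conjugacy, so there is nothing to transport along the map $\tUpsilon_\un^+$ that you are in the process of analysing. So as written the proposal has a gap exactly where you said the obstacle was; the paper's minimality-plus-citation argument is the missing idea.
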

 \begin{proof} By Theorem~\ref{Upsilonthm}(a), $\Upsilon_1^+$ is
continuous and  except for the cases (2) and (3)  given there it
is strictly order reversing.
The extensions $\tUpsilon^+$ and $\tI$ are exactly designed to make
$\tUpsilon^+$ continuous and strictly order reversing for the points in cases
(2) and (3). The commutativity $\tUpsilon^+\circ S = \tT_\mot \circ\tUpsilon^+$
follows from Corollary~\ref{cong1} and the construction of the extensions.
This gives the semiconjugacy of $\Delta_\un^+$
onto the image  $\tUpsilon^+(\Delta_\un^+)$.
 This image is a minimal set
clearly contained in the attractor $\tOmega_\un$.
Now by Proposition 3 in \cite{BT},
 if $\tT_\un$ had a periodic orbit
the attractor would consist of periodic orbits
and we know from Theorem 16 in \cite{lexi}
 that $\Delta_\un^+$ is aperiodic.
Thus $\tT_\un$ is aperiodic and so by Theorem 2.4
in \cite{ST}
it has exactly one minimal set and so 
$\tUpsilon^+(\Delta_\un^+) = \tOmega_\un $. The coding
assertion follows from the conjugacy and the fact as noted
above Corollary~\ref{cong1} that  $\cR_a = \Upsilon([a])$ for
 $a=1,2,3$ where $[a]\subset \Dun$ is the 
cylinder set $\{ \us\in\Dun\colon s_0 = a\}$.  
That completes the proof of part (a). For part (b), since
$\tUpsilon = \tUpsilon^+\circ\pi$, the result for $\tUpsilon$ also
follows using Theorem~\ref{onesided}(b).
\end{proof}
Using the conjugacy we can now transfer what we know about
the dynamics and topology of the infimax minimal set to
the attractor $\tOmega_\un$ of the extended ITM $\tT_\un$.
Using Theorem~\ref{onesided}(b) and the conjugacy we get:
\begin{cor}$\ $
\begin{enumerate}[(a)]
\item Restricted to the extended attractor
 $\tOmega_\un$, the extended self-map $\tT_\un$ is 
injective but for the sole exception that $\tT_\un^{-1}(0) = \{\mu_1,\mu_2\}$.
\item Both $\Omega_\un$ and $\tOmega_\un$ are Cantor sets
\end{enumerate} 
\end{cor}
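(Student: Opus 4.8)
The plan is to read off both statements from the conjugacy $\tUpsilon_\un^+$ of Theorem~\ref{maintext}(a), aided by the order-preserving collapse $p\colon\tI\to I$ that sends each adjoined point $\ty$ back to $y$. For part (a): since $\tUpsilon_\un^+\colon(\Dunp,S)\to(\tOmega_\un,\tT_\un)$ is a homeomorphism intertwining $S$ and $\tT_\un$, the failure of invertibility of $\tT_\un$ on $\tOmega_\un$ corresponds exactly to that of $S$ on $\Dunp$, which by Theorem~\ref{onesided}(c) is confined to $S^{-1}(\rseq\alpha)=\{\rseq{1\alpha},\rseq{2\alpha}\}$. I would then identify the images of these three sequences. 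First, $\rseq\alpha=\pi(\ut)$, so $\tUpsilon_\un^+(\rseq\alpha)=\tUpsilon_\un(\ut)=0$ by Theorem~\ref{maintext}(b) (case $j=0$). Second, $\rseq{1\alpha}=\pi(S^{-1}(\ut))$ and $\rseq{2\alpha}=\pi(S^{-1}(\uht))$ because $\lseq\beta$ ends in $1$ and $\lseq\hbeta$ ends in $2$; combining Remark~\ref{rem1}(b), the rescaling by $\ell_3-\ell_2$ built into $\tUpsilon_\un^+$, and the definitions $\mu_1=-\ell_2/(\ell_3-\ell_2)$, $\mu_2=-\ell_1/(\ell_3-\ell_2)$, one obtains $\tUpsilon_\un^+(\rseq{1\alpha})=\mu_2$ and $\tUpsilon_\un^+(\rseq{2\alpha})=\mu_1$. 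Hence $\tT_\un$ is injective on $\tOmega_\un$ apart from $\tT_\un^{-1}(0)=\{\mu_1,\mu_2\}$.

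For part (b), the space $\tOmega_\un$ is handled directly: it is homeomorphic to $\Dunp$ by Theorem~\ref{maintext}(a), and $\Dunp$ is a Cantor set, being a nonempty compact metric space, totally disconnected as a subspace of $\Sigma_3^+$, and perfect (an infinite minimal system has no isolated points, and $\Dunp$ is infinite since it is aperiodic by Lemma~\ref{minset} and Theorem~16 of \cite{lexi}). So $\tOmega_\un$ is a Cantor set.

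For $\Omega_\un$ I would first verify $\Omega_\un=p(\tOmega_\un)$: from $p\circ\tT_\un=T_\un\circ p$ one gets $p(\tT_\un^i(\tI))=T_\un^i(I)$, and since the sets $\tT_\un^i(\tI)$ form a decreasing sequence of compacta and $p$ is continuous, $p\big(\bigcap_i\tT_\un^i(\tI)\big)=\bigcap_iT_\un^i(I)$. Thus $\Omega_\un$ is nonempty and compact, and moreover $\Omega_\un=\Upsilon_1^+(\Dunp)$ because $p\circ\tUpsilon_\un^+=\Upsilon_1^+$ (this uses the coincidences (2)--(3) of Theorem~\ref{Upsilonthm}). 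Perfectness of $\Omega_\un$ then follows: the fibres of $\Upsilon_1^+$ have at most two points by Theorem~\ref{Upsilonthm}, so an isolated point of $\Omega_\un$ would pull back to a finite nonempty clopen subset of the perfect space $\Dunp$, which cannot exist.

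It remains to show $\Omega_\un$ contains no nondegenerate interval, and this is the step I expect to be the main obstacle. The mechanism is that $T_\un$ is piecewise a translation whose only discontinuities are at $\mu_1$ and $\mu_2$, so on any subinterval $[c',d']$ disjoint from the countable grand orbit $\bigcup_{n\ge0}T_\un^{-n}\{\mu_1,\mu_2\}$ every iterate $T_\un^n$ restricts to a single translation that carries $[c',d']$ into one block of the coding partition \eqref{code}; then all points of $[c',d']$ share one $\tT_\un$-itinerary, which contradicts injectivity of the itinerary map (it is the inverse of the homeomorphism $\tUpsilon_\un^+$, by Theorem~\ref{maintext}(a)), forcing $[c',d']$ to be a single point. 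The delicate point is that this grand orbit, though countable, might be dense in a hypothetical interval $[c,d]\subset\Omega_\un$, so one cannot simply excise it. I would close that gap by an iteration argument: taking the first time a point of $(c,d)$ meets $\{\mu_1,\mu_2\}$, its $T_\un$-image is an open interval straddling $\mu_1$ (or $\mu_2$), and one further step produces an interval inside $\Omega_\un$ with endpoint $0$; pursuing such interval neighbourhoods---or, alternatively, invoking the analysis of this very ITM family by Boshernitzan--Kornfeld \cite{BK} and Bruin--Troubetzkoy \cite{BT} together with the linear word complexity $p(j)\le 3j$ of \cite{CN}, which is incompatible with an invariant interval of positive length---yields the contradiction. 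Combining, $\Omega_\un$ is nonempty, compact, perfect and totally disconnected, hence a Cantor set.
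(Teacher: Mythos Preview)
Your argument for part (a) and for $\tOmega_\un$ being a Cantor set is correct and matches the paper's intent: the paper offers no proof beyond the sentence ``Using Theorem~\ref{onesided}(b) and the conjugacy we get:'', so the conjugacy plus Theorem~\ref{onesided}(c) is exactly what is meant, and your identification of $\tT_\un^{-1}(0)$ via Remark~\ref{rem1}(b) is right.

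For $\Omega_\un$ you are in fact more careful than the paper, which simply asserts the result. Your reduction $\Omega_\un=p(\tOmega_\un)=\Upsilon_1^+(\Dunp)$ and the perfectness argument are fine. The genuine gap is, as you suspect, total disconnectedness. Your iteration sketch does not close it: once some $T_\un^n((c,d))$ straddles $\mu_i$ you get a subinterval of $\Omega_\un$ with endpoint $0$, but you never explain why this leads to a contradiction, and your fallback to linear factor complexity $p(j)\le 3j$ does not help either, since an interval exchange on a finite union of intervals already has linear complexity.

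A clean argument is available using the order structure you already invoked. Suppose $[c,d]\subset\Omega_\un$ with $c<d$. For each $x\in[c,d]$ choose $f(x)\in(\Upsilon_1^+)^{-1}(x)\subset\Dunp$. Since $\Upsilon_1^+$ is weakly order reversing (Theorem~\ref{Upsilonthm}(b)), $x<y$ forces $f(x)\ge f(y)$, and equality is impossible because $\Upsilon_1^+(f(x))=x\ne y$. Thus $f$ is a strictly decreasing injection of $[c,d]$ into $\Sigma_3^+$ with the lexicographic order. But between any two sequences $\rseq s<\rseq t$ in $\Sigma_3^+$ there is a gap: if $k$ is the first index where they differ, the pair $s_0\dots s_{k-1}s_k 3^\infty$ and $s_0\dots s_{k-1}(s_k{+}1)1^\infty$ are consecutive. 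Hence $\Sigma_3^+$ contains no order-copy of $[0,1]$, and the existence of $f$ is a contradiction. This replaces both of your proposed fixes.
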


\section{The infimax family for other $N$}

The analog of \eqref{subdef} with $N$ symbols generates the
solution to the digit  frequency infimax problem for
sequences with elements from $\{1, 2, \dots, N\}$. Bruin shows
using renormalization that
the attractor of an ITM on $N$ intervals is isomorphic
to an S-adic minimal set using \eqref{subdef} with $N$ symbols
 (\cite{B}).
While we focus here on $N=3$, the generalization of the geometric
representation for larger $N$ is fairly
straightforward but heavier in computation and indices.

It has long been known that the $N=2$ infimax problem is 
solved by Sturmian sequences (\cite{glt}, \cite{vee}).
 The $n=2$ infimax family of substitutions, 
$\lambda_n: 1\mapsto 21^{n+1}, 2\mapsto 1^n$ for $n \in\Np$, 
generate these Sturmian sequences for irrational digit frequency
ratios. There are other well known S-adic families that generate
all Sturmians (for a survey, see
\cite{sturmiansurvey}): the $N=2$ family just generates the
infimax Sturmian for each frequency ratio. The geometric
representation in this case differs from $N>2$ in that the
Rauzy fractal is an entire interval and its two subpieces
are intervals with abutting endpoints. The induced map on the 
subpieces switches them and so  the geometric representation
is an IET on two intervals which
can be interpreted as a circle homeomorphism.
The identification of
Sturmian sequences with itineraries of a circle map goes
back to Morse and Hedlund. In addition, when $N=2$ the
substitution induced order on the 
Bratteli-Vershik diagram is proper and so the
Vershik map can be globally defined.

The Sturmian sequences on two symbols have a host 
of nice properties (\cite{sturmiansurvey}) and
there is much literature devoted to their generalization
to more symbols (see, for example, \cite{gbu}). There is not a single
generalization for all the Sturmian properties, but rather
many generalizations, each of which possesses one or a few of
the Sturmians' nice properties. The infimax S-adic family
for $N>2$ generalizes the infimax property of the Sturmians and
their geometric representations, rather than being an
IET on more intervals, is an ITM on $N$ intervals.

\subsection*{Acknowledgements}
Our thanks to the referee for many useful comments.


\normalsize
\baselineskip=17pt

\bibliographystyle{amsplain}
\bibliography{georefs}

\end{document}